\documentclass{amsart}
\usepackage{graphics,verbatim,color,amsthm}

\usepackage[all]{xy}

\theoremstyle{plain}

\newtheorem{proposition}{Proposition}
  
\newtheorem{cor}{Corollary}
 
\newtheorem{definition}{Definition}

\theoremstyle{remark}

\theoremstyle{definition}

\def\metrictwo#1{\langle\!\langle #1 \rangle\!\rangle}

\def\normtwo#1{\| #1 \|}

\def\R{\mathbb{R}}

\def\S{\mathbb{S}}

\def\l{\lambda}
\def\a{\alpha}
\def\b{\beta}
\def\e{\epsilon}

\def\g{\gamma}

\def\d{\delta}

\def\p{\phi}
\def\t{\theta}

\def\w{\omega}

\def\z{\zeta}

\def\cM{{\mathcal M}}
\def\cE{{\mathcal E}}

\def\p2{\frac{\pi}{2}}

\def\into{\rightarrow}

\def\union{\cup}
\def\dim{{\rm dim}\,}

\def\smallskip{\par\vspace{1mm}}
\def\medskip{\par\vspace{2mm}}
\def\bigskip{\par\vspace{3mm}}

\def\fr#1#2{\frac{#1}{#2}}

\def\m#1{\begin{bmatrix}#1\end{bmatrix}}

\newcount\equationcount
\def\thenumber{0}
\def\eq#1{\global\advance\equationcount by 1
   \def\thenumber{\number\equationcount}
                        {$$#1\eqno(\thenumber)$$}}

\begin{document}

\title[Scattering in the Isosceles 3BP]{Scattering in the positive energy isosceles three-body problem}
\author{Richard Moeckel}

\date{February 19, 2026}

\maketitle

\begin{abstract}
In the three-body problem with positive energy, solutions which avoid triple collision have the property that the size of the triangle formed by the bodies tends to infinity as $t\into \pm\infty$.  Furthermore, the triangles have well-defined asymptotic shapes $s_\pm$.  The scattering problems asks which asymptotic shape $s_+$ can occur for a given choice of $s_-$.  Previous work shows that this can be viewed as the problem of finding heteroclinic orbits connecting equilibrium points on a boundary manifold ``at infinity'' and some results were obtained for solutions which avoid collisions.  The goal of this paper is to study the scattering effect of binary and near-triple collisions in a simple setting -- the isosceles three-body problem.  The details depend on the mass parameters but in many cases, a fixed isosceles initial shape $s_-$ scatters to essentially all possible isosceles shapes $s_+$.
\end{abstract}

\section{Introduction}
The Newtonian $3$-body problem models the motion of three point masses $m_i$ with positions $q_i\in \R^d$, $i=1,2,3$.  The configuration vector $q=(q_1,q_2,q_3)\in\R^{3d}$ describes the triangle formed by the three bodies.  If the center of mass is fixed at the origin then the moment of inertia $r$ where $r^2=\normtwo{q}^2= m_1|q_1|^2+m_2|q_2|^2 +m_3|q_3|^2$ measures the size of the triangle formed by the three points.  
Two kinds of collision singularities are possible.  A binary collision occurs at time $T$ if exactly two of the masses approach the same position as $t\into T$.  A triple collision occurs if all three masses approach the origin, that is, if the size of the triangle converges to zero.  Binary collisions can be regularized.  If there are no triple collisions, then the regularized solutions exist for all time $t\in\R$.

If the total energy is positive and if no triple collisions occur then the regularized solutions have $r(t)\into\infty$ as $t\into\pm\infty$ \cite{Chazy, DMMY}.  In fact, the
size of the triangle $r(t)$ decreases monotically from $\infty$ to some minimal value $r_{min}$ and then increases monotonically to infinity.
Moreover, the configuration has well-defined limiting shapes in forward and backward time where the shape is described by the normalized configuration vector $s= q/r$.  Two shapes  $s_\pm$ are {\em related by scattering} if there is a regularized solution such that $s(t)\into s_-$ as $t\into-\infty$ and $s(t)\into s_+$ as $t\into +\infty$.

In \cite{DMMY} the idea of {\em scattering at infinity} was introduced by considering limits of solutions such whose minimal size $r_{min}$ is large.  Equivalently, at least two of the three sides of the triangle remain large.  Limits of such solutions were studied by introducing an invariant {\em infinity manifold} $\cM_\infty$ as a boundary to the physical energy manifold.  Orbits  with $r_{min}$ large are perturbations of orbits in $\cM_\infty$.  The construction is similar to McGehee's blow-up of triple collision to obtain an invariant {\em triple collision manifold} forming a boundary at $r=0$ \cite{McGehee}. In $\cM_\infty$ there is a normally repelling manifold of  equilibrium points $\cE_-$ and a normally attracting manifold of equilibrium points $\cE_+$.   These manifolds are parametrized by the set of noncollision shapes.  A shape $s_-$ is related by scattering to a shape $s_+$ if and only if there is a heteroclinic orbit connection the corresponding equilibrium points $p_-\in\cE_-$ and $p_+\in\cE_+$.

It turns out that the flow on the infinity manifold is particularly simple.   For noncollision shapes with large size, the effect of gravity disappears and the motion is that of free particles. 
A free particle motion is of the form $q(t) = a+t v$ where $a,v\in\R^{3d}$.  For such a motion, we have $s(t) = q(t)/r(t)\into \pm\fr{v}{\normtwo{v}}$.  In other words, the limiting shapes are just plus or minus the normalized velocity vector.  Since this can be chosen arbitrarily it follows that for every shape $s$ with $\normtwo{s}=1$, $s$ and its inversion $-s$ are related by scattering at infinity.  There are infinitely many heteroclinic orbits connecting the corresponding restpoints, given by fixing the vector $a$.  In \cite{DMMY} binary collisions were not regularized, so strictly speaking we should choose $a$ such that the free particle motion avoids collisions.  This can always be done and then the free particle motion really does give an orbit in $\cM_\infty$.  Of course we are mainly interested in scattering orbits in the physical energy manifold rather than in $\cM_\infty$.

The main results of \cite{DMMY} are as follows.  First, the flow near the normally hyperbolic manifolds of noncollision equilibrium points $\cE_\pm$ are analytically linearizeable.  The equilibrium points have analytic stable and unstable manifolds and a scattering orbit will be an intersection of $W^u(p_-)$ and $W^s(p_+)$ for $p_\pm \in \cE_\pm$.  If we fix the restpoint $p_-$ the final restpoint $p_+$ will depend analytically on the initial condition in $W^u(p_-)$ provided $p_+\in \cE_+$, that is, provided it's a restpoint corresponding to a noncollision shape.  Next, the free particle orbits in $\cM_\infty$ are shadowed by orbits with $r(t)<\infty$ and $r_{min}$ large.  Finally, the scattering for orbit near infinity is nontrivial in the sense that a given shape $s_-$ is related by scattering to a nonempty open set of shapes $s_+$ where the heteroclinic orbits have no collisions and have large $r_{min}$.

The goal of this paper is to investigate the scattering effect of binary and triple collisions in a simple case -- the isosceles three-body problem.  Suppose two masses are equal say, $m_1=m_2=1$.  Then there is an invariant subset of the planar three-body problem such that the third mass $m_3$ remains on the $y$-axis while $q_1$ and $q_2$ are related by reflection through this axis.  The possible isosceles shape can be parametrized by a single variable $\t\in [-\p2,\p2]$ (see Figure~\ref{fig_isoscelesshapes}).  The endpoints $\t=\pm\p2$ are binary collision shapes and $\t=0$ represents a flat triangle with the bodies collinear. 
\begin{figure}[h]
\scalebox{0.5}{\includegraphics{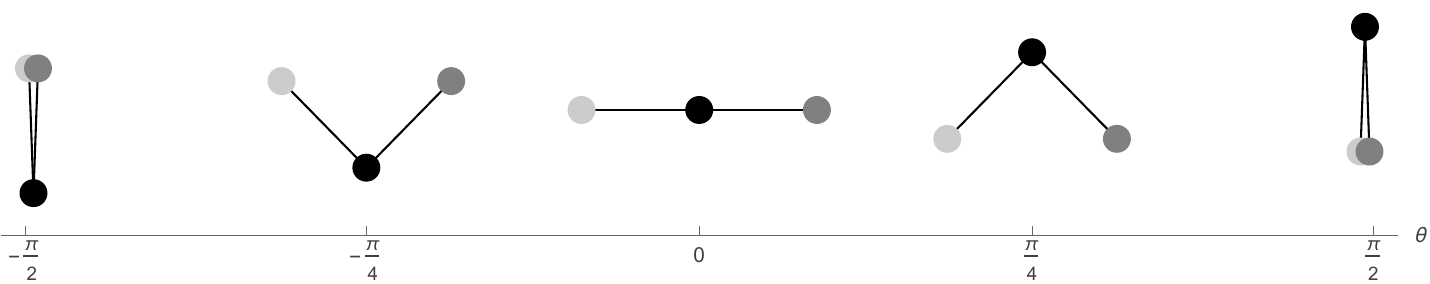}}
\caption{Isosceles shapes.}
\label{fig_isoscelesshapes}
\end{figure}

To study scattering in the isosceles problem, imagine fixing some initial shape $\t_-$.  We will see below that for a given positive energy $h$, there is a one-parameter family of orbits asymptotic to this shape as $t\into -\infty$.  Figure~\ref{fig_scatterm31thetalag} show what happens to this family as $t$ increases toward $+\infty$.  The family is plotted in the $(\t,v)$-plane where $v$ is a velocity variable asymptotic to $\dot r$, the rate of change of the size.  For the energy level $h=\fr12$ it turns out that for solutions with noncollision asymptotic shapes, $v\into \pm1$ as $t\into\pm\infty$.  In the figure, the solutions start near $v=-1$ with $r$ large and and equilateral triangular shape.  They are followed until they return near $v=1$ with $r$ large again.  It seems that the possible final shapes $\t_+$ cover the entire shape interval $[-\p2,\p2]$.  
\begin{figure}[h]
\scalebox{0.4}{\includegraphics{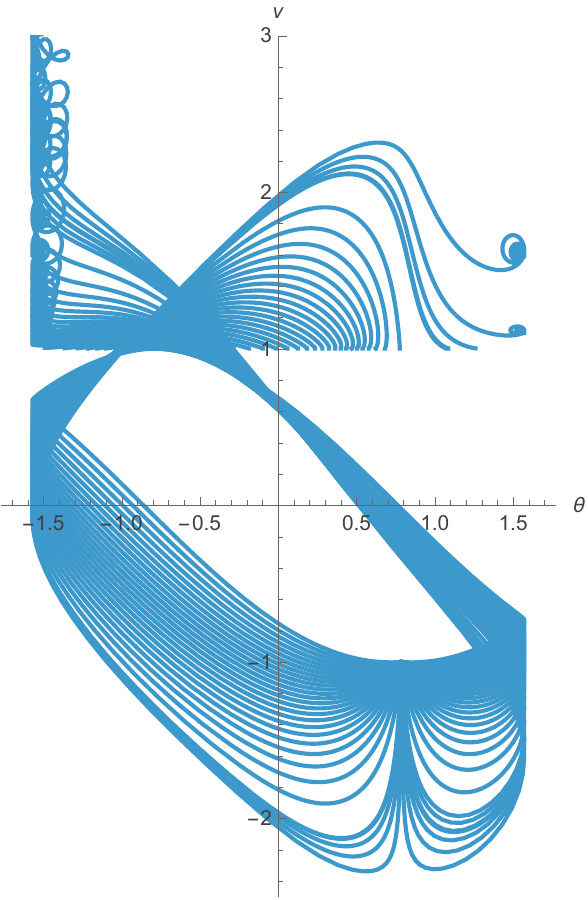}}
\caption{Scattering from the equilateral triangle shape in the equal mass isosceles three-body problem.  }
\label{fig_scatterm31thetalag}
\end{figure}

\begin{figure}[h]
\scalebox{0.4}{\includegraphics{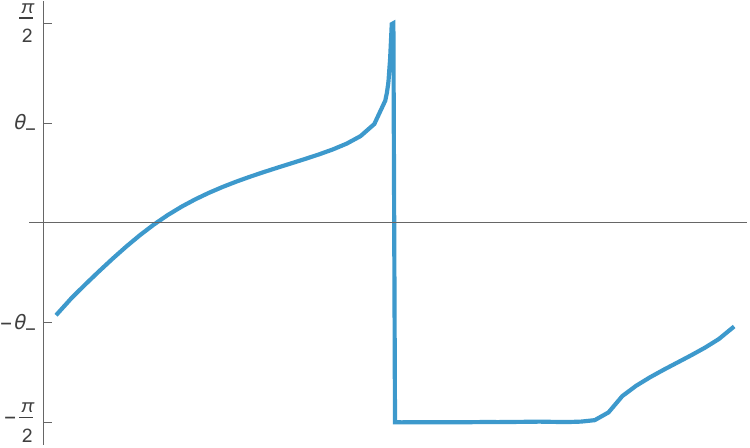}}
\caption{One parameter family of final shapes $\t_+$ with $\t_-$ fixed at the equilateral triangle as in Figure~\ref{fig_scatterm31thetalag}.  The discontinuity occurs at a triple collision orbit. In this case $\t_+$ varies over $[-\p2,\p2]\setminus\{-\t_-\}$.}
\label{fig_thetaplusgraphm31}
\end{figure}

This is further illustrated in Figure~\ref{fig_thetaplusgraphm31} which graphs the final shape $\t_+$ for the one-parameter family of orbits starting at $\t_-$.  Similar results hold for any initial shape $\t_-$, not just the equilateral one.  It will be proved in Proposition~\ref{prop_scatteringcaseII} below that for this equal mass case, the possible values of $\t_+$ do indeed cover the whole shape interval with the possible exception of $-\t_-$.  

Actually, the value $\t_+=-\t_-$ is also attained if we allow scattering at infinity.   Recall that studying scattering at infinity amounts to considering free particle motion, as long as that motion does not involve collisions.  But for the isosceles problem collisions are inevitable, even for free particles.  In section~\ref{sec_regularization} below, we will regularize the binary collisions and then in section~\ref{sec_flowatinfinity} we study the resulting flow on the isosceles infinity manifold $\cM_\infty$.   It turns out that every shape $\t_-$ is related to the shape $-\t_-$ by scattering at infinity.  For example the shape with $\t=\fr{\pi}{4}$ in Figure~\ref{fig_isoscelesshapes} is related to the shape with $\t=-\fr{\pi}{4}$.  

To see the effect of the binary collision it's interesting to compare this to the result stated above for free particle motions with no collisions.  Let $s = (q_1,q_2,q_3)$ be the normalized isosceles shape represented by $\t=\fr{\pi}{4}$.  Then there are free particle motions in the plane with no binary collisions connection the shape $s$ to the shape $-s$.  The latter shape is just the shape $s$ rotated by $\pi$.  While superficially similar, this is not the same as the shape represented by $\t=-\fr{\pi}{4}$  (see Figure~\ref{fig_binaryeffect}).

\begin{figure}[h]
\scalebox{0.4}{\includegraphics{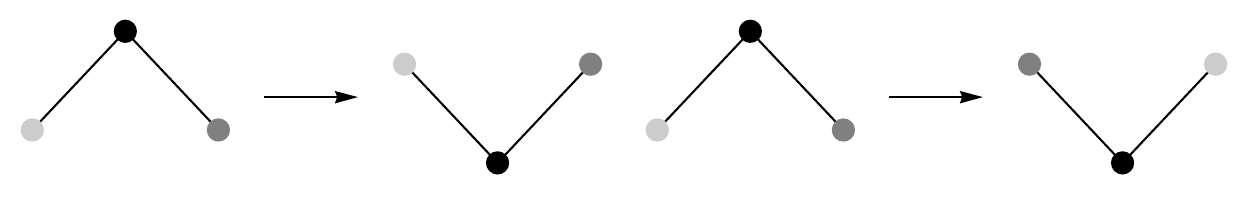}}
\caption{Scattering at infinity with and without binary collisions.  On the left we have the isosceles problem with binary collision.  On the right, a possible free particle scattering in the plane without collisions.  The final shapes are different and even not rotationally equivalent in the plane.}
\label{fig_binaryeffect}
\end{figure}

Triple collision also has a large effect on scattering.  After McGehee's pioneering study of triple collision in the collinear three-body problem there were several studies of the isosceles problem \cite{Simo, Devaney, SimoMartinez}.    The invariant triple collision manifold $\cM_0$ is a two-dimensional surface.  Solutions which experience triple collision converge to $\cM_0$, that is, $r\into 0$.  Since they never return near $r=\infty$, the final scattering angle $\t_+$ is undefined.  The discontinuity in Figure~\ref{fig_thetaplusgraphm31} occurs at a parameter value associated to a triple collision solution.  The key point for us is the behavior of $\t_+$ for solution near triple collision.  This will be determined by the flow on $\cM_0$.

The dynamics on $\cM_0$ depends on the choice of the mass parameters. Simo showed numerically that there are two bifurcation values $m_3=\e_1,\e_2$ where the dynamics qualitatively changes.  We have  $\e_1\approx 0.378532$ and $\e_2\approx 2.661993$.  Propositions ~\ref{prop_scatteringcaseI}, \ref{prop_scatteringcaseII} and \ref{prop_scatteringcaseIII} below present our main scattering results for each of the three mass intervals $(0,\e_1), (\e_1,\e_2)$ and $(\e_2,\infty)$.  For the equal mass case $m_3=1$ we have $\e_1<m_3<\e_2$.  For this interval, it turns out that for solutions near triple collision have $\t_+=\pm\p2$.  In Figure~\ref{fig_thetaplusgraphm31}, consider the interval to the left of the discontinuity.   Its endpoints represent orbits near $\cM_\infty$ and $\cM_0$.  Since $\t_+$ varies continuously, it takes on all values between $-\t_0$ and $\p2$.  On the other side of the discontinuity the endpoints are near  $\cM_0$ and $\cM_\infty$ and we get scattering angles between $-\p2$ and $-\t_0$.  Thus we can get any $\t_+$ in $[-\p2,\p2]$ except that the value $\t_+=-\t_-$ occurs only at infinity.  

The detailed results  are slightly different for $m_3$  in each of the parameter intervals.  See Propositions ~\ref{prop_scatteringcaseI}, \ref{prop_scatteringcaseII} and \ref{prop_scatteringcaseIII}.  Roughly speaking for $m_3$ in $(\e_1,\e_2)$ and $(\e_2,\infty)$ we get scattering to essentially all possible angles $\t_+$ in the interval $[-\p2,\p2]$ while for $m_3$ in $(0, \e_1)$ we can only guarantee about half of the interval.  But in all cases  it's the difference between the behaviors near infinity and near triple collision which is responsible for the large range of $\t_+$.  

The paper is divided into many sections.  In Section~\ref{sec_Jacobi} we blow up the triple collision and introduce the isosceles shape parameter $\t$.  Section~\ref{sec_regularization} introduces another shape parameter $u$ which is convenient for regularizing binary collisions.  Section~\ref{sec_triple} recalls the previous work on isosceles triple collision.  Section~\ref{sec_Chazy} is devoted to showing how some of the classical work of Chazy \cite{Chazy} applies to the regularized isosceles problem.  In particular, we need to compare the classical time variable $t$ to the time variable $\tau$ introduced in previous sections.  Sections~\ref{sec_infinitymanifold} and \ref{sec_flowatinfinity} describe the isosceles infinity manifold $\cM_\infty$ and study scattering at infinity, then in section~\ref{sec_collisiontoinfinity} we find connections from $\cM_\infty$ to triple collision.  Our main scattering theorems are proved in section~\ref{sec_scatteringtheorems} and finally, these are illustrated with numerical examples in section~\ref{sec_scatteringexperiments}.

\section{Blown-up Jacobi Coordinates}\label{sec_Jacobi}
Consider the isosceles three-body problem with masses $m_1=m_2=1$, $m_3>0$.  Let $q_1=(-x_1/2,y),q_2=(x_1/2,y),q_3=(0,y_3)$ be the positions where $x_1\ge 0$ and $2y+m_3 y_3=0$.  Introduce Jacobi coordinates
$$q_2-q_1=(x_1,0)\qquad  q_3-\fr12(q_1+q_2) = (0,y_3-y) = (0,x_2).$$
Let $x=(x_1,x_2)$.  Then the Lagrangian is
$$\begin{aligned}
L(x,\dot x) &= \fr14|\dot x_1|^2+\fr12\mu|\dot x_2|^2+U(x)\\
U(x) &= \fr{1}{d_{12}}+\fr{m_3}{d_{13}} + \fr{m_3}{d_{23}} \\
d_{12}&=|x_1|\quad d_{13}=d_{23} = \sqrt{x_2^2+\fr14 x_1^2}
\end{aligned}
$$
where $\mu= \fr{2m_3}{2+m_3}$.
The Euler-Lagrange equations are:
\begin{equation}\label{eq_odeJacobi}
\fr12 \ddot x_1= U_{x_1}\qquad \mu\,\ddot x_2 = U_{x_2}.
\end{equation}
Solutions of the Euler-Lagrange equations preserve the energy $\fr14|\dot x_1|^2+\fr12\mu|\dot x_2|^2 - U(x) = h$.
It's convenient to define a {\em mass norm} and {\em mass inner product} on $\R^2$ by 
$$\normtwo{v}^2=\fr12 v_1^2+\mu v_2^2\qquad \metrictwo{v,w} =  \fr12 v_1 w_1+\mu v_2 w_2.$$
The energy equation becomes
$$\fr12\normtwo{\dot x}^2-U(x)=h.$$

Let $r^2=\normtwo{x}^2=\fr12|x_1|^2+\mu|x_2|^2$ be the moment of inertia and define blown-up coordinates $(r,\t)$ by
$$x_1=\sqrt{2}r\cos\t\qquad x_2 = \fr{1}{\sqrt\mu}r\sin\t.$$ 
The Lagrangian becomes
$$\begin{aligned}
L(r,\t,\dot r,\dot \t) &= \fr12(\dot r^2+r^2\dot\t^2)+\fr{1}{r}V(\t)\\
V(\t) &= \fr{1}{r_{12}}+\fr{2m_3}{r_{13}} \\
r_{12}&=\sqrt{2}|\cos\t| \quad r_{13}= \sqrt{\fr{\cos^2\t}2+\fr{\sin^2\t}{\mu}}
\end{aligned}
$$
We have $-\fr\pi 2 \le \t \le \fr\pi 2$.  Solutions of the Euler-Lagrange equations preserve the energy
$$\fr12(\dot r^2+r^2\dot\t^2)-\fr{1}{r}V(\t) = h.$$

Introduce a new timescale $\tau$ with
$$\fr{d\tau}{dt} = r \psi(r)$$
where $\psi(r)$ is some smooth function.  Setting $\psi(r) = \sqrt{r}$ gives the usual McGehee timescale factor $r^\fr32$ which is good for studying triple collision at $r=0$.  On the other hand using
$$\psi(r) = \sqrt{\fr{r}{1+r}}$$
gives a timescale factor which behaves like McGehee's near $r=0$ but is asymptotic to $r$ as $r\into\infty$.  The latter seems good for studying scattering..

Define new velocity variables
$$v = \psi(r) \dot r \qquad \a = r\psi(r) \dot\t.$$
Then the blown-up Euler Lagrange differential equations are
$$\begin{aligned}
r^, &= v\,r\\
v^,&= \fr1{2(1+r)}v^2+\a^2-\fr{1}{1+r}V(\t) \\
\t^,&= \a\\
\a^, &= -\fr{1+2r}{2(1+r)}v\a + \fr{1}{1+r}V_\t(\t)
\end{aligned}
$$
where $\;^,$ denotes differentiation with respect to the new timescale.
The energy equation is
$$\fr12(v^2+\a^2) - \fr{1}{1+r}V(\t) = \fr{hr}{1+r}.$$

\section{Regularization of binary collisions}\label{sec_regularization}
The  angular variable $\t$ parametrizes the {\em shape space}, that is, the set of configurations with $\normtwo{x}^2=\fr12|x_1|^2+\mu|x_2|^2=1$.  The normalization defines an ellipse and the shape space is the semi-ellipse with $x_1\ge 0$.  The endpoints $\t = \pm \fr\pi 2$ correspond to binary collision shapes with masses $m_1,m_2$ colliding.  We are going to regularize the double collisions using a two-to-one cover of the shape space by a circle.  This resembles the Levi-Civita approach to regularization in the planar problem.

Let $(y_1,y_2) \in \S^1 = \{y_1^2+y_2^2=1\}$ and define a branched covering map by setting 
$$x_1=\sqrt{2}y_1^2\qquad x_2= \fr{1}{\sqrt{\mu}}y_2\sqrt{1+y_1^2}.$$
This is a map from  $\S^1$ onto the semi-ellipse which is two-to-one except at the poles $(y_1,y_2)=(0,\pm 1)$ which map to the  binary collision points.
If we parametrize the circle by setting $(y_1,y_2)=(\cos u, \sin u)$, $u\in\R$ we have an infinite branched cover from $\R$ to the shape space.  The angular variables $\t,u$ are related by
$$\cos\t=\cos^2u \qquad \sin\t= \sin u\sqrt{1+\cos^2u}$$
or, explicitly
\begin{equation}\label{eq_thetau}
\t(u)=\arctan\fr{ \sin u\sqrt{1+\cos^2u}}{\cos^2u} \qquad \t_u(u) = \fr{2\cos u}{\sqrt{1+\cos^2u}}.
\end{equation}
We have $\t(u+2\pi)=\t(u)$ and $\t(u+\pi)= -\t(u)$.

As $u$ varies over $[-\fr\pi{2},\fr\pi{2}]$, $\t$ also sweeps out the interval $[-\fr\pi{2},\fr\pi{2}]$ and this interval is repeatedly covered as $u$ varies over $\R$.  As $u$ continues over 
$[\fr\pi{2},\fr{3\pi}{2}]$, $\t$ again sweeps out the interval $[-\fr\pi{2},\fr\pi{2}]$ but backwards.  This is the double cover of shape space.  But using the angular variable this oscillating double cover is repeated with period $2\pi$ (see Figure~\ref{fig_thetau}).  While the variable $u$ is useful for regularization, using $\t\in [-\fr\pi{2},\fr\pi{2}]$  gives a unique way to describe the shape.
\begin{figure}[h]
\scalebox{0.5}{\includegraphics{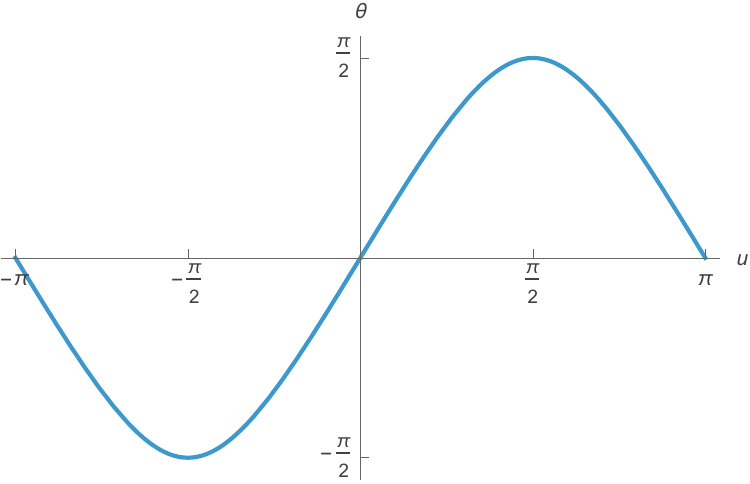}}
\caption{The graph of the function $\t(u)$ relating the two shape variables.  As $u$ varies over an interval of length $2\pi$, the shape parameter $\t$ is double-covered except at the binary collisions.}
\label{fig_thetau}
\end{figure}

To complete the regularization we will also introduce a new velocity variable $w=\a\cos u$ and a new timescale $\tau$ such that 
$$\fr{dt}{d\tau} = \sqrt{\fr{r}{1+r}}r\cos^2u.$$
The extra factor of $\cos^2u$ slows things down near binary collisions.
Using ${}'$ to denote differentiation with respect to the new time scale, the regularized differential equations become
\begin{equation}\label{eq_regode}
\begin{aligned}
r' &= vr\cos^2u\\
v' &= \fr{1}{2(1+r)} v^2\cos^2u +w^2-\fr{1}{1+r}W(u) \\
u' &= \fr12 \sqrt{1+\cos^2u}\,w\\
w' &=-\fr{1+2r}{2(1+r)}vw\cos^2u +\fr{\sqrt{1+\cos^2u}}{2(1+r)}W_u(u) \\
&\qquad\qquad + \fr12\left(v^2-\fr{2hr}{1+r}\right)\sqrt{1+\cos^2u}\sin u\cos u
\end{aligned}
\end{equation}
where 
$$W(u) =\cos^2u\,V(\t(u)) = \fr{1}{\sqrt{2}}+\fr{2m_3}{\sqrt{\cos^4u/2+(1-\cos^4u)/\mu  }}.$$
The energy equation is
\begin{equation}\label{eq_renergy}
\fr12(v^2\cos^2u+w^2) - \fr{1}{1+r}W(u) = \fr{hr}{1+r}\cos^2u.
\end{equation}
The regularized potential function $W(u)$ is smooth near the binary collisions.   Figure~\ref{fig_VW} shows the two shape potential functions $V(u), W(u)$.

\begin{figure}[h]
\scalebox{0.6}{\includegraphics{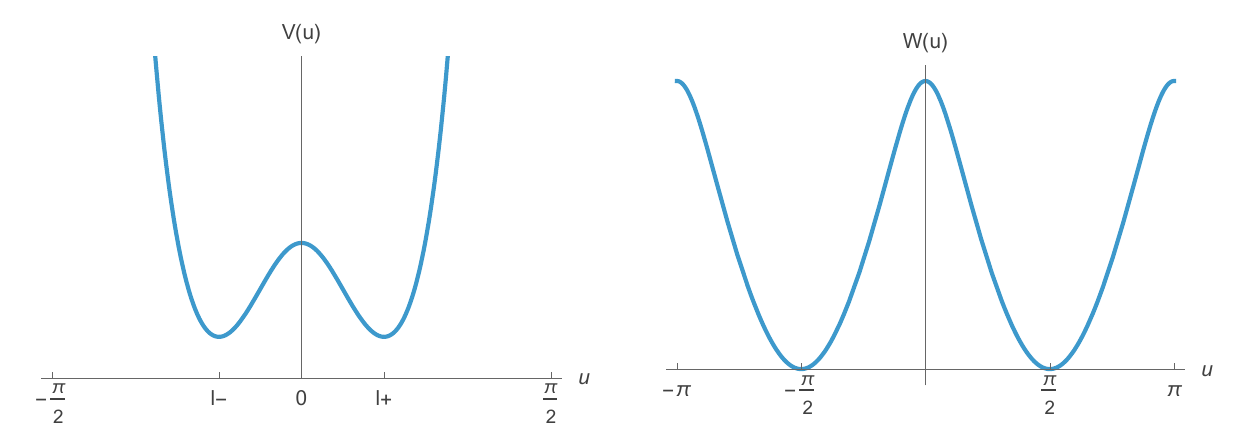}}
\caption{Shape potentials for the equal mass isosceles three-body problem ($m_3=1$).  $V(u)$ (left) has singularities at $\pm \fr{\pi}{2}$.  The regularized potential $W(u)$ (right) is smooth.  $l_-,l_+$ of $V$ are the Lagrange, equilateral critical points.}
\label{fig_VW}
\end{figure}

Finally, a coordinate system which displays both triple collision and infinity is obtained by using the radial variable 
$$s=\fr{r}{1+r} \qquad r = \fr{s}{1-s}.$$
The differential equations are
\begin{equation}\label{eq_sode}
\begin{aligned}
s' &= vs(1-s)\cos^2u\\
v' &= \fr12(1-s)v^2\cos^2u +w^2-(1-s)W(u) \\
u' &= \fr12 \sqrt{1+\cos^2u}\,w\\
w' &=-\fr12(1+s)vw\cos^2u +\fr12\sqrt{1+\cos^2u}(1-s)W_u(u) \\
&\qquad\qquad +\fr12 \left(v^2-2hs \right)\sqrt{1+\cos^2u}\sin u\cos u
\end{aligned}
\end{equation}
with energy equation
\begin{equation}\label{eq_senergy}
\fr12(v^2\cos^2u+w^2) -(1-s)\,W(u) =hs \cos^2u.
\end{equation}
Note that  $\{s=0\}$ and $\{s=1\}$ are both invariant sets.  $s=0$ represents triple collision while $s=1$ corresponds to $r=\infty$. 

The next result shows that all of these coordinates changes yield a global flow on a three-dimensional energy manifold.  The four-dimensional phase space for (\ref{eq_regode}) is
$\R^+ \times \R^3=\{(r,v,u,w):r\ge 0\}$.  For (\ref{eq_sode}) it's $[0,1] \times \R^3=\{(s,v,u,w):0\le s\le 1\}$.  
\begin{proposition}\label{prop_existence}
The energy equations (\ref{eq_renergy}) and (\ref{eq_senergy}) define smooth three-dimensional submanifolds of phase space and the solutions $\g(\tau)$ of 
equations (\ref{eq_regode}) and (\ref{eq_sode}) exist for $\tau\in\R$.
\end{proposition}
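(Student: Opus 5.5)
The plan is to handle the two claims separately: smoothness of the energy manifolds by the implicit function theorem, and global existence by a priori bounds that rule out finite-time blow-up. Throughout I take $h>0$, the positive-energy case of the paper. I will also use that $W(u)$ is smooth, $\pi$-periodic and strictly positive. Indeed, $\cos^4u/2+(1-\cos^4u)/\mu\ge \min(\fr12,\fr1\mu)>0$, so $0<W\le C$.

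\textbf{Smoothness.} Write the left side minus the right side of (\ref{eq_senergy}) as
$$F(s,v,u,w)=\fr12(v^2\cos^2u+w^2)-(1-s)W(u)-hs\cos^2u,$$
and show that $0$ is a regular value of $F$ on $[0,1]\times\R^3$.
\begin{itemize}
\item The partial derivatives are $F_w=w$, $F_v=v\cos^2u$ and $F_s=W(u)-h\cos^2u$.
\item Suppose $F=0$ and the gradient vanishes. Then $w=0$ and $v\cos u=0$.
\item If $\cos u\ne 0$, then $v=0$, and the energy equation becomes $(1-s)W=-hs\cos^2u$. The left side is $\ge 0$ and the right side is $\le 0$, so both vanish. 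This forces $s=1$, and then $\cos u=0$, a contradiction.
\item Hence $\cos u=0$. The energy equation then gives $(1-s)W=0$, so $s=1$. But then $F_s=W(u)>0$.
\end{itemize}
So the gradient never vanishes on the level set, which is therefore a smooth three-dimensional submanifold; the faces $s=0$ and $s=1$ are invariant.

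For (\ref{eq_renergy}), multiply by $1+r$. On $r\ge0$ the change of variables $s=r/(1+r)$ is a diffeomorphism onto $0\le s<1$ carrying one level set to the other (same argument, or transport). Equations (\ref{eq_regode}) and (\ref{eq_sode}) are related by this change of variables, so the existence statement for $r$ also follows from the one for $s$ (see the last paragraph).

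\textbf{Global existence: bounds on $s,u,w$.} The vector field (\ref{eq_sode}) is smooth, since $W$ is smooth, so local solutions exist. It suffices to show that no solution on the energy manifold can leave every compact set in finite time.
\begin{itemize}
\item The coordinate $s$ stays in $[0,1]$, because both boundary faces are invariant.
\item The angle $u$ has $|u'|\le C|w|$.
\item From the energy equation, $w^2\le 2(1-s)W+2hs\cos^2u\le 2C+2h$. So $w$ is uniformly bounded, and $u$ grows at most linearly.
\end{itemize}

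\textbf{Global existence: the bound on $v$ (main obstacle).} The expected main difficulty is $v$: the energy only controls $v^2\cos^2u$, which gives no bound on $v$ near binary collisions. The fix is to substitute the energy relation into the $v$ equation:
$$\fr12(1-s)v^2\cos^2u=(1-s)\bigl[(1-s)W+hs\cos^2u-\fr12w^2\bigr].$$
Then $v'$ becomes a bounded function of $(s,u,w)$, so $|v'|\le C'$ along solutions on the energy manifold and $v$ grows at most linearly in $\tau$. With $v$ bounded on finite intervals, $w'$ is bounded on finite intervals too. Hence no component blows up in finite time, and solutions of (\ref{eq_sode}) exist for all $\tau\in\R$.

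\textbf{Transfer to (\ref{eq_regode}).} For the $r$-system, either transport through the diffeomorphism above, or argue directly. The same substitution bounds $v',u',w'$, and $r'=vr\cos^2u$ is linear in $r$ with a coefficient that is bounded on finite intervals. So $r$ obeys an exponential bound, and $r\ge0$ is preserved since $r=0$ is invariant. This gives solutions of (\ref{eq_regode}) for all $\tau\in\R$.
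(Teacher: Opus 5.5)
Your proposal is correct and takes essentially the same approach as the paper. For smoothness you do a regular-value check on the energy function; the paper splits on whether $F_w=F_s=0$, where the energy relation forces $v^2\cos^2u=2W(u)>0$, whereas you split on whether $\cos u=0$. For global existence you use the energy bound on $w^2$ and substitute the energy relation into the $v'$ equation to bound $|v'|$ uniformly, exactly as the paper does. Handling the $r$-system by transporting through the diffeomorphism $s=r/(1+r)$ is a clean way to make precise the paper's remark that a similar argument works for (\ref{eq_renergy}).
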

\begin{proof}
The energy equation (\ref{eq_senergy}) can be written as $E(s,v,u,w)=\fr12(v^2 \cos^2u+w^2) -(1-s)\,W(u) -hs \cos^2u = 0$ and
$$\nabla E = (W(u)-h\cos^2u,v\cos^2u,E_u,w).
$$
If $w\ne 0$ or $W(u)\ne h\cos^2u$ then $\nabla E\ne 0$.  If $w= 0$ and $W(u)= h\cos^2u$ then the energy equation gives $v^2\cos^2u = 2W(u)$.  Since $W(u)>0$ this means
$v\cos^2u\ne 0$ and again $\nabla E\ne 0$.  So (\ref{eq_senergy}) defines a smooth submanifold.  A similar argument works for (\ref{eq_renergy}).

To see that solutions exist for all time, first note that since $r\ge 0$ and $s\in [0,1]$ and since $W(u)$ is bounded, the energy equations give finite upper bounds for $w^2$.  Since 
$|u'|\le \sqrt{2}|w|$, $u(\tau)$ can't blow up in finite time.
There is no upper bound for  $|v|$ but from (\ref{eq_sode}) and (\ref{eq_senergy}) we have
$$-W(u)\le v' \le v^2\cos^2u+w^2 \le 2W(u)+2h.$$
Since $v'(\tau)$ is bounded above and below, $v(\tau)$ cannot blow up in finite time.  Since $s\in [0,1]$ and since $r'\le vr$ these variables are also bounded for finite times.
\end{proof}
Remember that $u$ is an angular variable representing a point of $\S^1$.  Let $[u] =(\cos u,\sin u)\in\S^1$ and define
$[(r,v,u,w)]=(r,v,[u],w), [(s,v,u,w)]=(s,v,[u],w)$.  The phase spaces can now be viewed as $\R^+ \times \R\times \S^1\times\R$ and $[0,1] \times \R\times \S^1\times\R$.
The extra compactness is occasionally useful below.

In the classical theory of the three-body problem, the only impediments to global existence of solutions are collision singularities.  We have blown-up the triple collision singularity into an invariant subset which will be investigated in the next section.  The solutions of the regularized equations simply pass through the binary collisions.
The binary collision occur at times $\tau_c$ with $\cos u(\tau_c)=0$, that is, at times such that $u(\tau_c)\equiv \fr{\pi}{2} \bmod \pi$.  If $\cos u=0$, the energy equation shows that
$u' =w$ with $w^2=(1-s)W(u)$.    It follows that for binary collisions with $s\ne 1$, the binary collision times $\tau_c$ are isolated.

\section{Triple collision}\label{sec_triple}
 In our blown-up coordinates, triple collision occurs at $\{s=0\}$, which is an invariant manifold for (\ref{eq_sode}).   
 \begin{equation}\label{eq_sode0}
\begin{aligned}
s' &= 0\\
v' &= \fr12v^2\cos^2u +w^2-W(u) \\
u' &= \fr12 \sqrt{1+\cos^2u}\,w\\
w' &=-\fr12vw\cos^2u +\fr12\sqrt{1+\cos^2u}(W_u(u)  +v^2\sin u\cos u)
\end{aligned}
\end{equation}
with energy equation
\begin{equation}\label{eq_senergy0}
\fr12(v^2\cos^2u+w^2) -W(u) =0.
\end{equation}
So we have an invariant two-dimensional {\em collision manifold} (see Figure~\ref{fig_collisionmanifold})
$$\cM_0(h) = \{(s,v,u,w): s=0, (\ref{eq_senergy0})\text{ holds} \}.$$
 \begin{figure}[h]
\scalebox{0.5}{\includegraphics{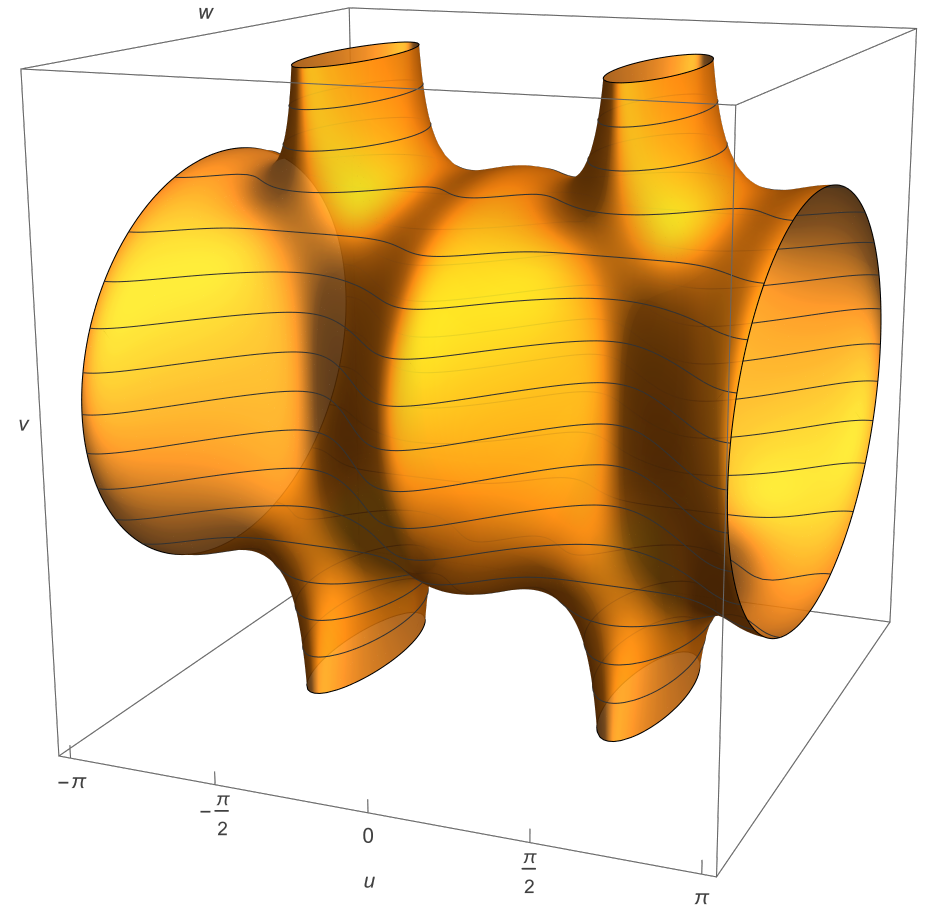}}
\caption{Triple collision manifolds for the isosceles three-body problem  in $(u,w,v)$ coordinates .  Level curves of the Lyapunov function $v$ are shown. The manifold repeats periodically in $u$ with period $2\pi$. }
\label{fig_collisionmanifold}
\end{figure}
 
 The flow near triple collision has been extensively studied \cite{Simo,SimoMartinez,Devaney}. We will review some of the pertinent results here.  We will concentrate on the fundamental region with  $-\fr{\pi}{2}\le u\le\fr{\pi}{2}$.   In Figure~\ref{fig_VW}  three critical points of $V(u)$ for the case $m_3=1$ are visible.  These correspond to the well-known central configurations of the three-body problem.  For the isosceles case, these are the collinear Euler configuration  with $m_3$ at the midpoint of $m_1,m_2$ and two equilateral triangular Lagrange configurations.  The Euler shape is given by $u=0$.  The Lagrange shapes are at $u=l_+, l_-$ where $l_- = -l_+$ and
$$\cos^4 l_+ = \frac{2+m_3}{2(1+2m_3)}.$$

There are  six corresponding restpoints in the fundamental region of the collision manifold which play an important role in this paper.  
\begin{proposition}\label{prop_equilibrium0}
The equilibrium points on $\cM_0(h)$ with  $-\fr{\pi}{2}\le u\le\fr{\pi}{2}$ are the points 
$$(s,v,\t,w) = (0,\pm\sqrt{2V(u_c)},u_c,0)$$
 where $u_c$ is one of the three critical points of the unregularized potential $V(u)$.
\end{proposition}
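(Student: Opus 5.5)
We set the right-hand sides of (\ref{eq_sode0}) to zero and use the energy relation (\ref{eq_senergy0}) to eliminate possibilities. Since $\sqrt{1+\cos^2u}>0$, the equation $u'=0$ forces $w=0$. Then $w'=0$ reduces to
$$W_u(u)+v^2\sin u\cos u=0,$$
and $v'=0$ reduces to $\fr12v^2\cos^2u=W(u)$. The energy equation with $w=0$ gives the same relation $v^2\cos^2u=2W(u)$, so the $v'$ equation is automatic on $\cM_0(h)$ once $w=0$.

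First we rule out binary collision points. If $\cos u=0$, then $v^2\cos^2u=2W(u)$ would give $W(u)=0$, which is impossible since $W>0$. So every equilibrium has $|u|<\fr\pi2$ and $\cos u\ne0$, and we may solve $v^2=2W(u)/\cos^2u=2V(\t(u))$. This gives $v=\pm\sqrt{2V}$ evaluated at the shape $u$, which matches the claimed form since the paper writes $V(u)$ for $V(\t(u))$.

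Next we substitute $v^2=2W/\cos^2u$ into the $w'$ condition, obtaining
$$W_u(u)+2W(u)\tan u=0.$$
The key computation is to relate this to $V$. Since $V=W/\cos^2u$, we have
$$\fr{d}{du}V(\t(u))=\fr{W_u\cos^2u+2W\cos u\sin u}{\cos^4u}=\fr{W_u+2W\tan u}{\cos^2u}.$$
So the condition is exactly $\fr{d}{du}V(\t(u))=0$, i.e.\ $u$ is a critical point of the unregularized potential viewed as a function of $u$. Moreover $\t_u(u)=2\cos u/\sqrt{1+\cos^2u}\ne0$ on the open interval, so these are the same as the critical points of $V(\t)$. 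Conversely, any such $u_c$ with $w=0$ and $v=\pm\sqrt{2V(u_c)}$ satisfies all four equations and the energy relation.

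The last step is to confirm that there are exactly three critical points in $(-\fr\pi2,\fr\pi2)$: the Euler point $u=0$ and the Lagrange points $l_\pm$. This is the only part requiring real work. For it we would write $V$ in terms of $c=\cos^2\t=\cos^4u$ and use that $V$ is even in $\t$. Then $V=\fr{1}{\sqrt2\sqrt c}+2m_3\big(\fr c2+\fr{1-c}\mu\big)^{-1/2}$, and its derivative in $c$ vanishes at a single $c$, which is the Lagrange value $\cos^4l_+$ given in the paper. Together with the critical point at $\t=0$ coming from evenness, this yields exactly the points $0,l_\pm$, and hence the six equilibria.
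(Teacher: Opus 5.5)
Your proposal is correct and follows the paper's argument: $u'=0$ forces $w=0$, the energy relation gives $v^2\cos^2u=2W(u)$ (so $v'=0$ is automatic), and $w'=0$ then reduces to $\fr{d}{du}V(\t(u))=0$. Your two additions fill in details the paper leaves implicit. First, you exclude $\cos u=0$ via the energy relation, which is the right mechanism, since $W_u(\pm\fr\pi2)=0$ and the $w'$ equation alone would not rule these points out. Second, your computation in $c=\cos^2\t$ shows there are exactly three critical points, recovering $\cos^4 l_+=\fr{2+m_3}{2(1+2m_3)}$.
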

\begin{proof}
At a restpoint of (\ref{eq_sode0}) we must have $w=0$.  The energy equation reduces to $v^2\cos^2u=2W(u)$ or
$v^2=2V(u)$ where $V(u)$ is the unregularized potential.  This automatically gives $v'=0$.  The equation $w'=0$ becomes
$W_u(u) + v^2\sin u\cos u=0$.  This is not satisfied at the double collision points where $\cos\t=0$ and if $\cos\t\ne 0$, a short computation shows that  it reduces to $V_u=0$.
\end{proof}

Of course these restpoints will be repeated periodically in $u$.  The restpoints on $\cM_0$ are all hyperbolic and their stable and unstable manifolds determine the behavior of solutions of the three-body problem close to triple collision.  Let
$L_\pm=(0,\sqrt{2V(l_\pm)},l_\pm,0)$ denote the restpoints with the Lagrange configuration $l_\pm$ and with $v>0$ and $L^*_\pm=(0,-\sqrt{2V(l_\pm)},l_\pm,0)$ denote those with $v<0$.  Similarly $E,E^*$ will denote the restpoints with the collinear, Euler configuration.    If $p$ is any one of these restpoints we will use the notation 
$W^{s,u}_0(p)$ to denote the stable and unstable manifolds viewed within the two-dimensional collision manifold $\cM_0$.  It turns out that the four Lagrange restpoints are all saddles while $E,E^*$ are a sink and source respectively:
$$\begin{aligned}
&\dim W^s_0(L_\pm)=1 &\dim W_0^u(L_\pm)=1 &\quad \dim W^s_0(L^*_\pm)=1 &\dim W^u_0(L^*_\pm)=1 \\
&\dim W_0^s(E)=2 &\dim W_0^u(E)=0&\quad \dim W_0^s(E^*)=0 &\dim W_0^u(E^*)=2
\end{aligned}
$$
Furthermore eigenvalues at $E,E^*$ are non-real for $m_3<\fr{55}{4}$.

The notation $W^{s,u}(p)$ will denote  the stable and unstable manifolds viewed in the three-dimensional energy manifold.  It turns out that all of the starred restpoints, that is, those with $v<0$, get an extra stable direction while the unstarred restpoints get an extra unstable one.  Thus,
$$\begin{aligned}
&\dim W^s(L_\pm)=2 &\dim W^u(L_\pm)=1 &\quad \dim W^s(L^*_\pm)=1 &\dim W^u(L^*_\pm)=2 \\
&\dim W^s(E)=2 &\dim W^u(E)=1&\quad \dim W^s(E^*)=1&\dim W^u(E^*)=2.
\end{aligned}
$$
These results follow from eigenvalue computations similar to those in the references cited above and will not be repeated here.  While the coordinate systems used are slightly different, this only changes the eigenvalues by a real, positive factor.

The stable manifolds of the starred restpoints are not entirely contained in the collision manifold.  Any initial condition in $W^s(L_{\pm}^*)$ or $W^s(E^*)$ with $s>0$ represents a solution of the three-body problem which has a forward-time triple collision.  Similarly, any solution with $s>0$ in $W^u(L_{\pm})$ or $W^u(E)$ has a backward-time triple collision.
Proposition~\ref{prop_triplecollision} below shows that these stable and unstable manifolds represent all of the triple collision solutions.

We want to characterize all solutions of (\ref{eq_sode}) which experience triple collision.  It turns out that answering questions like this about  
the global dynamics of the problem for energies $h\ge 0$ is relatively simple compared with the negative energy case.  One aspect of this is existence of a Lyapunov function for the flow.  It's the same function used in McGehee's work after taking into account our different choice of time rescaling.
\begin{proposition}\label{prop_lyapunov}
 Suppose the energy satisfies $h\ge 0$.  Then the function $\phi(\tau) = \fr{v(\tau)}{\sqrt{1-s}}$ is strictly increasing along  solutions of  (\ref{eq_sode}) with $0\le s(\tau)<1$ except at the restpoints on the collision manifold.  On the triple collision manifold $\phi(\tau) = v(\tau)$.
\end{proposition}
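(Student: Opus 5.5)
The plan is to differentiate $\phi$ along solutions of (\ref{eq_sode}), use the energy relation (\ref{eq_senergy}) to remove the potential term, and show that $\phi'$ is a sum of nonnegative terms. I then determine when $\phi'$ can vanish on a whole time interval, since strict increase follows once that is ruled out away from the restpoints.

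\textbf{The derivative of $\phi$.} For $0\le s<1$, $\phi$ is smooth and
$$\phi'=\fr{v'}{\sqrt{1-s}}+\fr{v\,s'}{2(1-s)^{3/2}}.$$
From (\ref{eq_sode}) we have $s'=vs(1-s)\cos^2u$, so the second term equals $\fr{s v^2\cos^2u}{2\sqrt{1-s}}$. In the equation for $v'$, the energy relation (\ref{eq_senergy}) lets us replace
$$(1-s)W(u)=\fr12(v^2\cos^2u+w^2)-hs\cos^2u .$$
This gives
$$v'=-\fr12 s v^2\cos^2u+\fr12 w^2+hs\cos^2u .$$
The $v^2$ terms then cancel and we are left with the identity
$$\sqrt{1-s}\,\phi'=\fr12 w^2+hs\cos^2u\ \ge 0\qquad (h\ge 0).$$
So $\phi$ is nondecreasing. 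On $s=0$ the definition gives $\phi=v$ directly, and there $\phi'=\fr12 w^2$.

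\textbf{Strictness: reducing to $w\equiv 0$.} Since $\phi'\ge 0$, it suffices to show that $\phi'$ cannot vanish identically on an interval unless the orbit is a restpoint. Suppose $\phi'\equiv0$ on an interval. Then $w\equiv 0$ there, hence $u'\equiv 0$ and $w'\equiv0$, so $u$ is constant.

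\emph{Case $s>0$, $h>0$.} Here $\phi'=0$ also forces $\cos u\equiv 0$. But then the energy equation gives $w^2=(1-s)W(u)>0$, contradicting $w\equiv0$.

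\emph{Case $s=0$.} With $w=0$ the energy equation forces $\cos u\neq 0$ and $v^2\cos^2u=2W(u)$. The condition $w'=0$ is then exactly the condition analysed in Proposition~\ref{prop_equilibrium0}, and $v'=0$ follows automatically. So the orbit is one of the restpoints $E,E^*,L_\pm,L^*_\pm$ (or a periodic translate in $u$), as allowed in the statement.

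\textbf{Main obstacle: the case $h=0$, $s>0$.} Here $\phi'=\fr12w^2/\sqrt{1-s}$, so $w\equiv0$ no longer forces $\cos u=0$. As in Proposition~\ref{prop_equilibrium0}, $w'=0$ together with the energy relation $v^2\cos^2u=2(1-s)W$ reduces to $(1-s)V_u=0$. So $u$ is a central configuration and the orbit is homothetic.

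The remaining equations then read $v'=-\fr12 s v^2\cos^2u$ and $s'=vs(1-s)\cos^2u$. A direct check shows these are consistent with $\phi$ being constant. So in this zero-energy homothetic (parabolic) case, $\phi$ is constant along a non-restpoint orbit, and the argument as planned does not give strict increase.

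I would therefore first re-derive this step carefully to confirm that the homothetic parabolic orbits are a genuine exception rather than an algebra slip. If they are, the statement would need to be qualified: strict increase holds for $h>0$, and for $h=0$ it holds off these homothetic orbits. The case $h>0$, which is the one relevant for scattering, is fully covered by the argument above.
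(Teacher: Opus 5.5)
Your identity $\sqrt{1-s}\,\phi'=\fr12w^2+hs\cos^2u$ and your analysis of where it vanishes are exactly the paper's argument, and for $h>0$ your case split is a complete proof. Your worry about $h=0$ is not an algebra slip: the paper's step ``the right-hand side vanishes if and only if $w=s\cos u=0$'' tacitly uses $h>0$, and on the zero-energy homothetic orbits ($u\equiv u_c$ a critical point of $V$, $w\equiv0$, $0<s<1$) the energy relation gives $v^2=2(1-s)V(u_c)$, so $\phi\equiv\pm\sqrt{2V(u_c)}$ is constant even though $s'=vs(1-s)\cos^2u_c\neq0$; the statement therefore genuinely requires $h>0$ (or excluding these orbits), and $h>0$ is the positive-energy case the paper actually uses.
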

\begin{proof}
A straightforward computation using (\ref{eq_sode}) and  (\ref{eq_senergy}) yields the formula
$$\sqrt{1-s}\,\phi' = \fr12w^2+hs\cos^2 u \ge 0.$$
The right-hand side vanishes if and only if $w=s\cos u=0$.  The energy equation shows $w=\cos u=0$ is impossible.  If $w=s=0$ and $\phi(\tau)$ is constant then we must have $w'=0$.  As in the proof of Proposition~\ref{prop_equilibrium0} this implies that we are at a restpoint.
\end{proof}

Since $s'=vs(1-s)\cos^2u$, the sign of $v$ determines whether the size variable $s(\tau)$ is increasing or decreasing.  The following result gives a rough classification for global behavior of the solutions of (\ref{eq_sode}).
\begin{proposition}\label{prop_triplecollision}
Let $\g(\tau)$ be any solution of (\ref{eq_sode}) with $0\le s(\tau)<1$.  If $v(\tau)<0$ for all $\tau\in\R$ then $\g(\tau)$ converges to one of the three restpoints on the collision manifold with $v=-\sqrt{2V(u_c)} <0$ as $\tau\into\infty$ (triple collision in forward time).  If $v(\tau)>0$ for all $\tau$ then $\g(\tau)$ converges to one of the  restpoints on the collision manifold with $v=\sqrt{2V(u_c)}>0$ as $\tau\into -\infty$ (triple collision in backward time).   For all other solutions there is a unique time $\tau_0$ such that
such that $v(\tau)<0$ for $\tau<\tau_0$ and $v(\tau)>0$ for $\tau>\tau_0$. For such solutions either $s(\tau)=0$ for all $\tau$ (solutions in the collision manifold) or else $s(\tau)>0$ and attains a positive minimum at $\tau=\tau_0$.
\end{proposition}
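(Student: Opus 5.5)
The plan is to combine the Lyapunov function $\phi=v/\sqrt{1-s}$ of Proposition~\ref{prop_lyapunov} with the equation $s'=vs(1-s)\cos^2u$. Since $\sqrt{1-s}>0$ when $s<1$, the sign of $v$ is the sign of $\phi$. Because $\phi$ is strictly increasing except at restpoints, $\phi$ can change sign at most once, and only from negative to positive. If $\g$ is a restpoint, it is one of the points in Proposition~\ref{prop_equilibrium0} and has $v\neq 0$, so it falls into the first or second case. Otherwise $\phi$ is strictly increasing. This gives a trichotomy: $v<0$ for all $\tau$; $v>0$ for all $\tau$; or there is a unique $\tau_0$ with $v(\tau_0)=0$, $v<0$ before it and $v>0$ after it. The possibility $v(\tau_0)=0$ on an interval is excluded by strict monotonicity.

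\textbf{The convergence cases.} Suppose $v<0$ for all $\tau$. Then $s$ is nonincreasing. The key step is to show that the forward orbit is bounded, so that its $\omega$-limit set is nonempty and compact.
\begin{itemize}
\item $s\in[0,1)$ is bounded, with $s\le s(0)<1$.
\item $w$ is bounded by the energy equation.
\item $u$ lives on the compact circle $\S^1$.
\item $v$ is bounded by $v(0)\le v<0$, since $\phi=v/\sqrt{1-s}$ increases while $\sqrt{1-s}\ge\sqrt{1-s(0)}$; so $v\ge \phi(0)\sqrt{1-s}\ge \phi(0)$.
\end{itemize}
By LaSalle's invariance principle, the $\omega$-limit set is contained in the set where $\phi'=0$, which by Proposition~\ref{prop_lyapunov} consists of restpoints on the collision manifold. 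This set is compact and invariant.

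Next I identify the limit. Along the $\omega$-limit set we have $v\le 0$, and $\phi$ takes a single value there, namely $\lim\phi$. The restpoints in $\cM_0$ (modulo $2\pi$ in $u$) are finite and isolated, and the $\omega$-limit set is connected, so it is a single restpoint. It must have $v=-\sqrt{2V(u_c)}$, since $v=0$ never occurs at a restpoint. Because it lies in $\cM_0$, we get $s\to 0$. If $s(\tau)>0$ this is a genuine forward triple collision. The case $v>0$ for all $\tau$ is the time-reversed version. There $s$ decreases as $\tau\to-\infty$, and $v$ is bounded above because $\phi$ decreases backward while $s$ stays at most $s(0)$. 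The $\alpha$-limit set is then a restpoint with positive $v$.

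\textbf{The remaining case and the main obstacle.} For the last case, $s=0$ is invariant, so either $s\equiv 0$ or $s>0$ for all $\tau$ by uniqueness of solutions. When $s>0$, the equation $s'=vs(1-s)\cos^2u$ shows that $s$ is nonincreasing before $\tau_0$ and nondecreasing after it. Moreover, $\cos^2u$ vanishes only at isolated binary collision times when $s\neq 1$, so $s$ is strictly monotone on each side. Hence $\tau_0$ is the unique minimum point and $s(\tau_0)>0$.

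The main obstacle I expect is the boundedness of $v$ needed to apply LaSalle, together with handling the lift of $u$ to $\R$ versus $\S^1$. I will work on the compactified phase space with $u\in\S^1$, as the paper suggests. The limit restpoint is then well-defined modulo $2\pi$, and since the translates are isolated the lifted solution converges to one lift.
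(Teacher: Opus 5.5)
Your proof is correct and follows essentially the same route as the paper. Monotonicity of $s$ together with the Lyapunov function $\phi=v/\sqrt{1-s}$ forces the $\omega$-limit set (with $u$ taken mod $2\pi$) to be a single isolated restpoint of $\mathcal{M}_0$ with $v<0$. Your sign trichotomy via strict monotonicity of $\phi$ is the paper's computation $v'=(1-s)W(u)+2hs\cos^2u>0$ on $\{v=0\}$ in disguise, since $\phi'=v'/\sqrt{1-s}$ there.

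One small slip: the bound $v(0)\le v$ you first write can fail when $s>0$. At points with $w=0$ one has $v'=s(1-s)\cos^2u\,(h-V)$, which is negative when $V>h$. The bound you then actually derive, $v\ge\phi(0)\sqrt{1-s}\ge\phi(0)$, is correct and is all the $\omega$-limit argument needs.
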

\begin{proof}
Consider a nonequilibrium solution with $v(\tau)<0$ for all $\tau\in\R$.  Then $s(\tau)$ is nonincreasing it has a limit $s(\tau)\into s_0\in [0,1)$ as $\tau\into\infty$.  
Since $\phi(\tau) = \fr{v(\tau)}{\sqrt{1-s}}\le 0$ is strictly increasing, it follows that $v(\tau)$ also has a limit, say $v(\tau)\into v_0\le 0$. 
Also, since the binary collision times are isolated, $s(\tau)$ is either identically zero or strictly decreasing.  We have seen that $w(\tau)$ is bounded. 

Recall that $u(\tau)$ is an angular variable representing a point on a circle.  For $u\in\R$ we defined $[u]=(\cos u,\sin u)\in\S^1$ to be the corresponding point on the circle and 
$[\g(\tau)] = (s(\tau),v(\tau),[u(\tau)], w(\tau))$.  It follows from the last paragraph that $[\g(\tau)]$ has a nonempty, compact, connected omega limit set $\omega([\g])$ contained in 
$\{s=s_0,v=v_0\}$.  If $[p]\in\w([\g])$, and $p=(s_0,v_0,u_0,w_0)$ is any angular representative then the Lyapunov function $\phi(\tau) = \fr{v_0}{\sqrt{1-s_0}}$ is constant along the orbit of $p$.  It follows from Proposition~\ref{prop_lyapunov} that  $p$ is one of the restpoints with $v<0$ in $\cM_0(h)$.  Since the restpoints are isolated, $\w([\g])$ consists of just one point of the form $[p]$ and  $\g(\tau)$ converges to one of its represenatives.  The case 
with $v(\tau)>0$ for all $\tau$ is similar.

For all other solutions, there is some time $\tau_0$ such that $v(\tau_0)=0$.  When $v=0$ (\ref{eq_sode}) and (\ref{eq_senergy}) show that
$$v'=(1-s)W(u)+ 2hs\cos^2u >0.$$
It follows that $\{v>0\}$ is positively invariant and that $\{v<0\}$ is negatively invariant.  The proposition follows.
\end{proof}
We will see in the next section that for the solutions which do not experience triple collisions, $s(\tau)\into 1$ and hence $r(\tau)\into \infty$ as $\tau\into\pm \infty$.

For the scattering results below we need to understand what happens to solutions with $s>0$ which pass close to triple collision. For example, a solution starting close to the stable manifold $W^s(L_+^*)$ will approach the collision manifold near $L_+^*$ and then closely follow one of the two branches of $W^u(L_+^*)$.  These branches move in $\cM_0$ in the direction of increasing $v$ and will either converge to one of the unstarred restpoints or else spiral up one of the arms near $u=\pm\fr{\pi}{2}\bmod \pi$.  The behavior depends on the choice of masses.  We will use the following result from the numerical work of  Simo \cite{Simo, SimoMartinez} which is best stated using the shape variable $\t\in [-\fr{\pi}{2},\fr{\pi}{2}]$.
\begin{proposition}[Simo]\label{prop_collisionwus}
There are two critical values of $m_3$, $\e_1<\e_2$ such that there are saddle-saddle connections in $\cM_0$.  For other values of $m_3$ the behavior of the branches of $W^u(L_+)$ is as follows.
For $0<m_3<\e_1$, the branch beginning with $w>0$ spirals up the arm with $\t=-\fr{\pi}{2}$ while the branch beginning with $w<0$ ends at $E$.  For $\e_1<m_3<\e_2$, the branch beginning with $w>0$ spirals up the arm with $\t=-\fr{\pi}{2}$ while the branch beginning with $w<0$ spirals up the arm with  up the arm with $\t=\fr{\pi}{2}$.  For $\e_2<m_3$, the branch beginning with $w>0$ ends at $E$  while the branch beginning with $w<0$ spirals up the arm with  up the arm with $\t=\fr{\pi}{2}$. 
The critical values are approximately $\e_1\approx 0.378532$ and $\e_2\approx 2.661993$.
\end{proposition}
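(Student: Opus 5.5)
The statement is attributed to Simo and rests on numerical work, so my plan is a computer-assisted verification organized around the qualitative structure of $\cM_0$. The analysis can be done entirely inside the two-dimensional collision manifold, since $\dim W^u_0(L_+)=1$ and this one-dimensional unstable manifold consists of exactly two branches. First I would rewrite (\ref{eq_sode0}) on $\cM_0$ using the energy relation (\ref{eq_senergy0}) to eliminate one variable. For $\cos u\ne0$ this gives $v=\pm\sqrt{(2W(u)-w^2)}/|\cos u|$ and a planar vector field in $(u,w)$. Near the arms $u\equiv\p2 \bmod \pi$ I would instead use the graph over $(u,v)$, since there $w^2\approx 2W(u)$.

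By Proposition~\ref{prop_lyapunov}, $v$ is strictly increasing on $\cM_0$ away from restpoints. Hence each branch of $W^u_0(L_+)$ has only three possible fates: it converges to the only unstarred restpoint with larger $v$ that can attract it, namely the sink $E$ (since $v(E)>v(L_+)$ because $V$ is minimal at $u=0$... to be checked against the values $V(l_\pm)$ versus $V(0)$); it converges to another saddle, which gives a saddle--saddle connection; or $v\into\infty$. I would show that $v\into\infty$ forces the orbit up an arm. For large $v$, the energy relation forces $\cos u\into 0$, so the orbit is trapped near some $u\equiv \p2 \bmod\pi$. The arm it climbs is identified by $\t=\pm\p2$ through (\ref{eq_thetau}). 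The reflection symmetry $u\mapsto -u$, $w\mapsto -w$ maps $L_+$ to $L_-$ and lets me relate the branch labels $w>0$ and $w<0$ consistently.

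Next I would define a splitting function measuring the fate of each branch. Take a section $\Sigma$ transverse to the flow, for instance a level set $\{v=v_1\}$ with $v_1$ slightly above $v(L_\pm)$, or the line $u=0$. Let $d(m_3)$ be the signed distance, along $\Sigma$, between the relevant branch of $W^u_0(L_+)$ and the stable manifold $W^s_0$ of the other Lagrange saddle $L_-$ (or its translate by $\pi$). The stable manifolds of the saddles are the separatrices dividing the basin of $E$ from the arms. So the sign of $d$ decides between ending at $E$ and spiralling up a particular arm, and the zeros of $d$ are exactly the saddle--saddle connections at $\e_1,\e_2$. I would integrate both manifolds numerically from their linear approximations, which are explicit from the eigenvectors at $L_\pm$. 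Then I would locate the sign changes of $d$ by bisection in $m_3$, and check that $d$ does not vanish elsewhere on a fine grid. Asymptotic analysis as $m_3\into0$ and $m_3\into\infty$ would cover the ends of the range.

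The main obstacle is rigor. The conclusion is intrinsically global and is not derivable from the earlier results by hand. To turn it into a proof I would first try validated numerics: interval-arithmetic enclosures of the local unstable and stable manifolds at the saddles, followed by rigorous ODE integration to $\Sigma$. This certifies the sign of $d$ on finite $m_3$-intervals, and a monotonicity or transversality check of $d$ at $\e_1,\e_2$ establishes that these are the only crossings. Within this paper I would simply cite \cite{Simo,SimoMartinez} for these numerics and verify consistency with my own integrations, as illustrated in the figures of the collision manifold.
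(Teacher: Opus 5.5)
Citing \cite{Simo,SimoMartinez} is exactly what the paper does: it gives no argument of its own for this proposition. The verification scheme you build around the citation, however, is aimed at the wrong saddle and would fail.

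Although the statement is printed with $W^u(L_+)$, its content only makes sense for the starred saddle $L^*_+=(0,-\sqrt{2V(l_+)},l_+,0)$. That is how the paper uses it: see the caption of Figure~\ref{fig_collisionwus}, and Section~\ref{sec_scatteringtheorems}, where the $w>0$ branch runs from $u=l_+$ past $\frac{\pi}{2}$ and up the arm at $u=\frac{3\pi}{2}$.

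For the unstarred $L_+$, your own Lyapunov argument settles everything independently of $m_3$. $V$ has its minima at $\pm l_+$ and a local maximum at $u=0$ (not a minimum, as you wrote). So $\cM_0\cap\{v>\sqrt{2V(l_+)}\}$ consists of two kinds of pieces, whose closures meet only at the upper Lagrange saddles:
\begin{itemize}
\item bounded caps around $u\equiv 0\bmod\pi$, each containing only a translate of the sink $E$;
\item arms around $u\equiv\frac{\pi}{2}\bmod\pi$, containing no restpoints.
\end{itemize}
Hence the $w<0$ branch of $W^u_0(L_+)$ always ends at $E$. The $w>0$ branch always climbs the arm at $u=\frac{\pi}{2}$, i.e.\ $\theta=+\frac{\pi}{2}$; this is what Proposition~\ref{prop_Lplustoinfinity} uses, and it contradicts the literal statement.

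In particular your splitting function $d(m_3)$ can never vanish. Since $V(l_-)=V(l_+)$, the points $L_+$, $L_-$ and all their translates lie on one level of $v$, while $v$ strictly increases along nonequilibrium orbits of $\cM_0$. Indeed $W^s_0(L_-)\setminus\{L_-\}\subset\{v<\sqrt{2V(l_+)}\}$, so your section $\{v=v_1\}$ with $v_1$ above that level does not even meet it.

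The repair is to take the source to be $L^*_+$. Follow its two branches from $v=-\sqrt{2V(l_+)}$ through the section $\{v=0\}$ up to the Lagrange level where $\cM_0$ pinches. Each relevant curve crosses $\{v=0\}$ exactly once, since $v'=W(u)>0$ there on $\cM_0$. The separatrices between the basins of the translates of $E$ and the arms are then the stable manifolds of the upper saddles at $u\equiv\pm l_+\bmod\pi$.

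With this source, your targets ($W^s_0(L_-)$ and its $u$-translates, the flow being $\pi$-periodic in $u$) are the right ones. Following the bookkeeping of Section~\ref{sec_scatteringtheorems}:
\begin{itemize}
\item at $\e_1$, the $w<0$ branch lies in the stable manifold of the saddle at $u=-l_+-\pi$, which separates the cap at $u=-\pi$ from the arm at $u=-\frac{3\pi}{2}$;
\item at $\e_2$, the $w>0$ branch lies in the stable manifold of the saddle at $u=-l_++2\pi$, which separates the arm at $u=\frac{3\pi}{2}$ from the cap at $u=2\pi$.
\end{itemize}
With the splitting measured on $\{v=0\}$, or any level below $\sqrt{2V(l_+)}$, your validated-numerics program becomes a reasonable route to a computer-assisted proof of what the paper simply takes from Sim\'o.
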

The behavior of the stable and unstable manifolds of the other saddle points in $\cM_0$ can be deduced from reflection symmetry and time reversal.

Figure~\ref{fig_collisionwus} shows these numerically computed branches for each of the three nondegenerate cases.  The middle case $m_3=1$ is also shown in the $(u,v)$ plane in Figure~\ref{fig_collisionwuu} for comparison.  The latter figure also shows  the boundary curves of the projection of $\cM_0$.
 \begin{figure}[h]
\scalebox{0.4}{\includegraphics{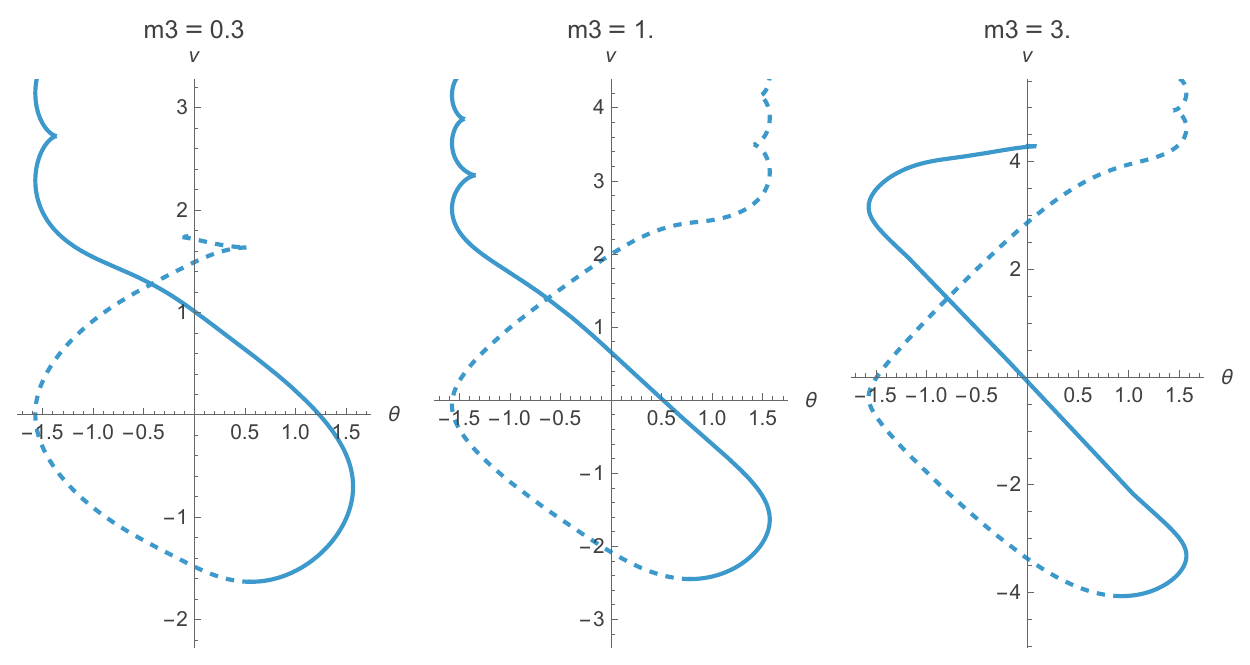}}
\caption{Branches of $W^u(L^*_+)$ projected to the $(\t,v)$-plane.  The three mass values shown illustrate the three cases from Proposition~\ref{prop_collisionwus}.  Solid lines represent the branches beginning with $w>0$ and the dashed represent those beginning with $w<0$.}
\label{fig_collisionwus}
\end{figure}

\begin{figure}[h]
\scalebox{0.4}{\includegraphics{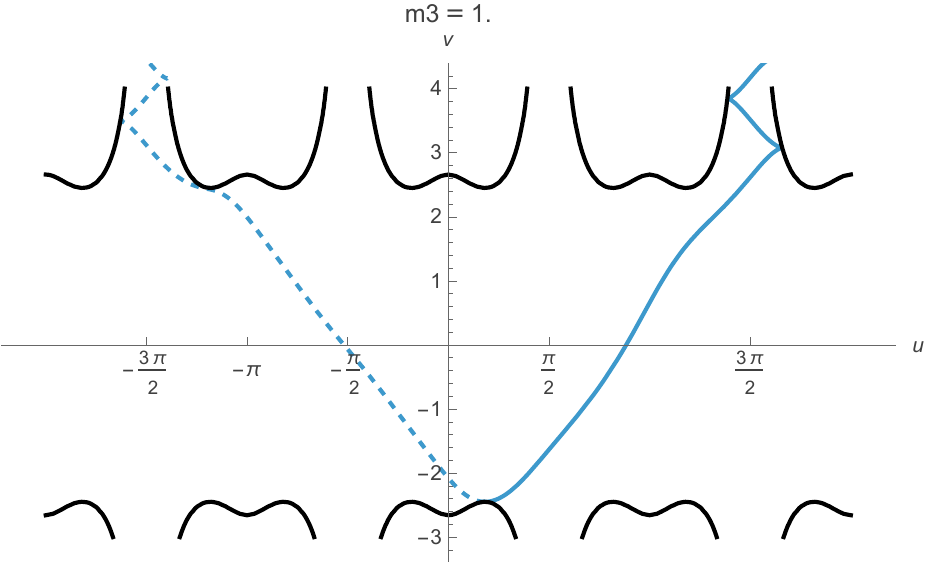}}
\caption{Branches of $W^u(L^*_+)$ projected to the $(u,v)$-plane for $m_3=1$ (corresponding the middle case in Figure~\ref{fig_collisionwus}).  The projection of $\cM_0$ to this plane is the region bounded by the wavy  black curves.}
\label{fig_collisionwuu}
\end{figure}

\section{Estimates in the natural timescale}\label{sec_Chazy}
By replacing the usual time parameter $t$ by the parameter $\tau$ we have regularized the binary collisions and slowed down the triple collisions producing a global flow.  But certain results are easier to prove and understand using the natural time scale.  In this section we temporarily go back to $t$ to prove some classical results due to Sundman and Chazy.
We restrict attention to solutions in the classical phase space $0<r$ or $0<s<1$.

Recall that the factor relating the two  time parameters $t, \tau$ is
\begin{equation}\label{eq_timechange}
t'(\tau) =\frac{r^\fr32}{\sqrt{1+r}}\cos^2u = \frac{s^\fr32}{1-s}\cos^2u
\end{equation}
Proposition~\ref{prop_existence} shows that solutions $\g(\tau)$ of the regularized differential equations exist for all $\tau\in\R$.  Given such a solution we will define a corresponding 
{\em classical time function} by
$$t(\tau) = t_0 +\int_{\tau_0} t'(\tau)\,d\tau$$
where $t_0, \tau_0$ are any convenient choices for initial times.  Note that $t'(\tau)\ge 0$ with equality only when $\cos u=0$, that is, at binary collision points.  
Since the  binary collision times $\tau_c$ are isolated $t(\tau)$ is still strictly increasing in some neighborhood of the collision time.  It follows that the binary collision times are isolated in both timescales.  

In fact, consider a collision with $u(\tau_c)=\fr{\pi}{2}$.  We have a convergent series expansion $u(\tau)= \fr{\pi}{2} + u'(\tau_c)(\tau-\tau_c)+\ldots$ where 
$u'(\tau_c) = \fr12w(\tau_c)\ne 0$.  Then $\cos^2u(\tau) = u'(\tau_c)^2(\tau-\tau_c)^2+\ldots$ and 
 integration of (\ref{eq_timechange}) gives a convergent series expansion of the form
\begin{equation}\label{eq_tseries}
t-t_c =  \frac{s_c^\fr32}{1-s_c}\fr{u'(\tau_c)^2}{3}(\tau-\tau_c)^3+\ldots
\end{equation}
where $s_c=s(\tau_c)$.  

Let $\g(\tau)$ be any solution of $(\ref{eq_regode})$ with $0<r(\tau)$ and let $t(\tau)$ be the corresponding classical time function.  The image $J(\g) = t(\R)$ will be some open interval and there will be a continuous,  inverse function $\tau(t)$, $\tau:J(\g)\into \R$.  the components of $\g(\tau(t)) =  (r(t),v(t),u(t),w(t))$ will be continuous functions of $t$ for $t\in J$.  The Jacobi coordinates 
$$x_1(t) = \sqrt{2}r(t)\cos^2u(t)\qquad x_2(t) = \fr{1}{\sqrt{\mu}}r(t) \sin u(t)\sqrt{1+\cos^2u(t)}$$
will also extend continuously through the binary collisions.  In fact, by (\ref{eq_tseries}), the inverse function $\tau(t)$ has a convergent series expansion in powers of $(t-t_c)^\fr13$ of the form
\begin{equation}\label{eq_tauseries}
\tau-\tau_c =  k(t-t_c)^\fr13+\ldots\qquad k\ne 0.
\end{equation}
We will call $x(t)=(x_1(t),x_2(t))$, $t\in J(\g)$ the {\em regularized classsical solution} corresponding to the solution $\g$.

The classical approach to the three-body problem makes use of the well-known Lagrange-Jacobi identity.  If $x(t)$ is a solution of (\ref{eq_odeJacobi}) then
$$\ddot I(t) = 4 h +2U(x(t))$$
where $I(t)=\normtwo{x(t)}^2 = r(t)^2$ is the moment of inertia.  In the positive energy case it follows that $\dot I(t)$
is strictly increasing along solutions.  A priori,  these equations only apply away from collisions but the following proposition gives an extension to regularized classical solutions.
\begin{proposition}\label{prop_Iestimates}
Let $x(t)$ be a regularized classical solution corresponding to a solution $\g$ of (\ref{eq_regode}).  Then $r(t)$ and the moment of inertia
$$I(t) = \fr12 x_1(t)^2+\mu x_2(t)^2 = r(t)^2$$
are $C^1$ for $t\in J(\g)$.  Moreover
\begin{equation}\label{eq_Iestimates}
\begin{aligned}
\dot I(t) &\ge \dot I(0)+4h t\\
I(t)&\ge I(0)+ \dot I(0) t+ 2ht^2
\end{aligned}
\end{equation}
\end{proposition}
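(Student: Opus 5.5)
We first establish the $C^1$ regularity of $r(t)$ and $I(t)$ on $J(\g)$, and then integrate the Lagrange--Jacobi identity.

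Away from the isolated binary collision times $t_c$, the regularized classical solution coincides with a genuine solution of (\ref{eq_odeJacobi}), so $r(t)$ is smooth there. The chain rule and (\ref{eq_timechange}) give
$$\dot r = \fr{r'(\tau)}{t'(\tau)} = \fr{vr\cos^2u}{\frac{r^{3/2}}{\sqrt{1+r}}\cos^2u} = v\sqrt{\fr{1+r}{r}}.$$
Here the factor $\cos^2u$ cancels identically. Consequently $\dot r(t)=v(\tau(t))\sqrt{(1+r)/r}$ is a continuous function of $t$ on all of $J(\g)$, including at the collision times, since $v,r$ are continuous in $\tau$ and $\tau(t)$ is continuous.

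This shows $r$ is differentiable at $t_c$ with derivative equal to the limit of $\dot r$. One way to see it is the mean value theorem on each side. Another is to note that $r(t)$ is continuous and $C^1$ off a discrete set, with $\dot r$ extending continuously. Hence $r\in C^1(J)$, and so is $I=r^2$, with $\dot I = 2r\dot r = 2v\sqrt{r(1+r)}$.

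For the inequalities, on each open subinterval between consecutive collision times the Lagrange--Jacobi identity gives
$$\ddot I = 4h+2U(x(t)) \ge 4h,$$
because $U>0$. Thus $\dot I(t)-4ht$ is nondecreasing on each such subinterval. Since $\dot I$ is continuous across the collision times and these are isolated, the function $\dot I(t)-4ht$ is continuous and nondecreasing piecewise, hence nondecreasing on all of $J(\g)$. This gives $\dot I(t)\ge \dot I(0)+4ht$ for $t\ge 0$. Integrating once more, which is legitimate since $I\in C^1$, yields
$$I(t)\ge I(0)+\dot I(0)t+2ht^2.$$
For $t<0$ the same monotonicity gives the reversed first inequality. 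However, $\dot I(t)-4ht$ nondecreasing means $\dot I(s)-\dot I(0)\le 4hs$ for $s\le 0$, and integrating from $t$ to $0$ again gives the quadratic lower bound for $I$. So the second inequality holds on all of $J$. The first inequality, read literally, is a forward-time statement, and I would state it for $t\ge 0$ and implicitly assume $0\in J(\g)$ by choosing $t_0=0$.

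The main obstacle is the behavior at the binary collisions. One must check that no delta-type contribution to $\ddot I$ appears there. The cancellation of $\cos^2u$ in $\dot r$ is what handles this: it shows $\dot I$ is genuinely continuous. Monotonicity then passes across the collisions because $U\to+\infty$ there, which only helps the inequality $\ddot I\ge 4h$, and the collision set is discrete.
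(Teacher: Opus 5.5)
Your proof is correct, but it handles the collisions by a different route from the paper.

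\textbf{The paper's route.} It gets continuity of $\dot I$ across a binary collision from two ingredients. First, $U(x(t))$ is integrable, which follows from the Puiseux behaviour $x_1(t)\sim (t-t_c)^{2/3}$ coming from (\ref{eq_tauseries}). Second, the Lagrange--Jacobi identity. It then writes $\dot I(t)=\dot I(0)+4ht+2\int_0^t U$ and reads off both inequalities from $U>0$. Separately, it shows $r(t)$ is $C^1$ by composing the series $r(\tau)=r(\tau_c)+\frac16 r'''(\tau_c)(\tau-\tau_c)^3+\cdots$ with $\tau-\tau_c=k(t-t_c)^{1/3}+\cdots$.

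\textbf{Your route.} You use the exact identity $\dot r = v\sqrt{(1+r)/r}$, in which $\cos^2u$ cancels. So $\dot r$ extends continuously simply because the regularized variable $v$ is continuous, and no series expansions are needed. Piecewise monotonicity of $\dot I-4ht$ together with continuity of $\dot I$ then gives the inequalities.

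\textbf{What each buys.} Your version deals head-on with the real issue, namely that $\dot I$ has no jump at $t_c$. Integrability of $U$ alone only gives existence of the one-sided limits of $\dot I$ at $t_c$, not their equality. The paper's integral formula therefore tacitly relies on its second, series-based argument for this point. Integrability of $U$ also comes out of your monotonicity argument as a by-product. The paper's route stays within the Sundman--Chazy framework of $(t-t_c)^{1/3}$ expansions, which it needs anyway to invoke Chazy's theorems, and it displays the integrated Lagrange--Jacobi identity explicitly.

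\textbf{Your sign remark.} This is also right. The linear bound on $\dot I$ holds only for $t\ge 0$ and reverses for $t<0$. The quadratic bound holds on all of $J(\g)$, because $I-2ht^2$ is convex. The paper only uses these estimates as $t\to+\infty$.
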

\begin{proof}
Assume without loss of generality that the initial times are chosen as $t_0=\tau_0=0$.  We know that $I(t)$ extends continuously to the binary collision times.  Away from binary collisions we have $\dot I = x_1\dot x_1+ 2\mu x_2\dot x_2$ and, from (\ref{eq_odeJacobi}),
$$\ddot I = {\dot x_1}^2+ 2\mu {\dot x_2}^2 + 2 x_1 U_{x_1} + 2 x_2 U_{x_2} = 4 T(\dot x)-2U(x) = 4h + 2U(x).$$
We have
$$U(x(t)) = \fr{1}{x_1(t)}+\fr{2m_3}{\sqrt{x_1(t)^2/2+x_2(t)^2/\mu}.}$$
Locally near a binary collision, $x_1(t)=\sqrt{2}r(t)\cos^2u(t)$ has a series expansion in powers of $(t-t_c)^\fr13$ whose lowest order term is of order $(t-t_c)^\fr23$ while $x_2(t_c)\ne 0$.  It follows that $U(x(t))$ is an integrable function and that
$$\dot I(t) = \dot I(0) + 4ht +\int_0^t U(x(\a))\,d\a$$
gives a continuous extension of $\dot I(t)$ for $t\in J(\g)$.  Moreover, since $U(x)>0$,  $\dot I(t)\ge \dot I(0)+4ht$.  Another integration of this continuous extension gives the estimate for  $I(t)$.  Since $r(t) = \sqrt{I(t)}$ and $I(t)>0$, it follows that $r(t)$ is also $C^1$.

A direct proof that $r(t)$ is $C^1$ using (\ref{eq_regode}) is also possible. Namely, in the regularized timescale $r(\tau)$ is analytic. Since $r' = vr\cos^2u$ it follows that at the binary collision times we have $r'(\tau_c) = r''(\tau_c)=0$.   Then we have a convergent series expansion of the form $r(\tau) = r(\tau_c)+ \fr16 r'''(\tau_c)(\tau-\tau_c)^3 +O((\tau-\tau_c)^4)$.   Substituting the series (\ref{eq_tauseries}) for $\tau - \tau_c$ gives a series for $r(t)$ in powers of $(t-t_c)^\fr13$ whose lowest term is of order $t-t_c$.  This shows that $r(t)$ is locally $C^1$.
\end{proof}

The following result about the classical time interval $J(\g)$ will allow us to apply some classical results of Chazy to our regularized problem.
\begin{proposition}\label{prop_texistence}
If $\g(\tau)$ experiences triple collision, the collision happens at a finite value of $t$ for the corresponding regularized classical solution and the interval $J(\g)$  has one finite endpoint.  All other regularized solutions exist for all $t\in\R$.
\end{proposition}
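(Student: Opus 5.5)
We will study the classical time function $t(\tau)=\int_0^\tau t'(\sigma)\,d\sigma$, with $t'(\tau)=\frac{s^{3/2}}{1-s}\cos^2u$ from (\ref{eq_timechange}). Since $J(\g)=t(\R)$, everything reduces to deciding whether $\int_0^{\pm\infty}t'(\tau)\,d\tau$ is finite or infinite in each time direction. The case split comes from Proposition~\ref{prop_triplecollision}. Either $\g$ tends to a restpoint of $\cM_0$ in one time direction, which is triple collision, or $v$ changes sign exactly once and $s$ has a positive minimum at $\tau_0$.

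\emph{Triple collision case.} Take forward time, with $\g(\tau)\into p$, a starred restpoint with $u=u_c$, $v=-\sqrt{2V(u_c)}<0$, $w=0$. Proposition~\ref{prop_equilibrium0} shows $\cos u_c\ne 0$. Hence $\cos^2u(\tau)\ge c>0$ and $v(\tau)\le -c'<0$ for $\tau$ large. Then $s'=vs(1-s)\cos^2u\le -\kappa s$, so $s(\tau)\le Ce^{-\kappa\tau}$, and $t'\le C s^{3/2}$ is integrable on $[\tau_1,\infty)$. Thus $t(\tau)$ tends to a finite limit $t^*$, and $r(t)\into 0$ as $t\into t^*$. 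In the other direction $v>0$ eventually, so we are in one of the cases below: either the solution escapes (shown next) or it is also a collision. In that second case $v<0$ for all $\tau$ but forward collision only, and since $v<0$ near $\tau=-\infty$ forces $s$ to increase backward, the backward end is not a collision. We need the backward end to be infinite, handled by the escape argument. So exactly one endpoint of $J$ is finite.

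\emph{Escape directions.} Consider a direction in which $\g$ does not converge to $\cM_0$; take $\tau\into+\infty$ with $v>0$ for $\tau>\tau_0$. Here $s$ increases to some limit $s_\infty\in(0,1]$. If $t(\tau)$ had a finite limit $T$, the regularized classical solution would live on $[0,T)$ and hence have $r(t)$ bounded on that interval. This is because $\dot I(t)=\dot I(0)+4ht+\int U$ with $\dot I$ increasing, together with $r\le r_\infty$, gives boundedness.

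The argument I would try first avoids classical estimates altogether. It uses the Lyapunov function $\phi$ and the fact that $s\ge s_1>0$ for $\tau\ge\tau_1$, which makes $t'\ge c\cos^2u$. It then remains to show that $\int^\infty\cos^2u\,d\tau=\infty$. The binary collisions are isolated and transversal: $u'=w$ with $w^2=(1-s)W$ at $\cos u=0$. From the energy equation (\ref{eq_senergy}), $w^2\le 2(1-s)W+2hs$ is bounded, so $|u'|$ is bounded. Consequently $u$ cannot pass through the zeros of $\cos u$ faster than at a bounded rate. Each visit of $u$ near $\pi/2 \bmod\pi$ is then separated by time intervals on which $|\cos u|\ge 1/2$, and if $u$ stays near such a point forever, collisions would accumulate.

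\emph{Main obstacle.} The delicate point is when $s_\infty=1$, where the bound $|u'|\le C$ alone does not prevent $u$ from sitting near $\pi/2$ with $\cos^2u\to0$. I would handle this through the energy equation. With $s\into1$ it gives $w^2\approx 2hs\cos^2u-v^2\cos^2u+2(1-s)W$, so $w\into0$ in the region where $\cos u\to0$. Meanwhile $v'\ge -(1-s)W\to0$. From this I would derive a differential inequality on $\cos^2u$ showing it cannot decay integrably.

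If that gets messy, I would fall back on the classical argument. Suppose $T<\infty$. Because $\dot I$ is increasing and $I$ is $C^1$ by Proposition~\ref{prop_Iestimates}, $I$ has a finite positive limit. Sundman's theorem (no triple collision with $I\not\to0$) together with isolated regularized binary collisions then lets one continue the classical solution past $T$. This contradicts the maximality of $J(\g)$, which is guaranteed by the global existence of $\g$ in Proposition~\ref{prop_existence} and the continuity of $\tau(t)$. I expect this continuation/contradiction step to be the crux.
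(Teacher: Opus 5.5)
Your triple-collision argument is correct and matches the paper. Your use of the bound on $|u'|$ also correctly handles the case where $\cos^2u(\tau_n)\ge\alpha>0$ along some sequence $\tau_n\to\infty$. The genuine gap is the remaining case $\cos u(\tau)\to 0$ in the escape direction, which you flag but never close.

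This case cannot be excluded: it is exactly the hyperbolic--elliptic or hyperbolic--parabolic escape with $\lambda=0$. So your remark that collisions ``would accumulate'' if $u$ stays near $\pi/2$ is wrong, and the promised differential inequality on $\cos^2u$ is never produced. The classical fallback does not close the gap either:
\begin{itemize}
\item The assertion that $r$ (or $I$) stays bounded on $[0,T)$ is unjustified. Monotonicity of $\dot I$ bounds $I$ only from below, an upper bound requires controlling $\int_0^TU\,dt$, and ``$r\le r_\infty$'' is circular when $s_\infty=1$.
\item Continuing past $T$ via ``isolated regularized binary collisions'' begs the question. The collisions are isolated in $\tau$, but if $\tau\to\infty$ as $t\to T$ then infinitely many of them pile up at $t=T$. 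Ruling out that accumulation without triple collision is the content of the proposition, not something you can cite in passing.
\end{itemize}

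The missing idea is to use the tight-binary geometry to get a priori bounds on the finite interval $[0,T)$, and then let the Lyapunov function produce the contradiction. Suppose $\cos^2u\le\frac12$ for $\tau\ge0$ and $t(\tau)\to T<\infty$.
\begin{itemize}
\item Then $r_{13}$ and $|x_2|$ are comparable to $r\ge r_0>0$. So in $\mu\ddot x_2=-2m_3x_2/r_{13}^3$ the force is continuous through binary collisions and bounded by $C/r_0^2$. Integrating over $[0,T)$ bounds $x_2$ and $\dot x_2$, hence $r$.
\item Next, $v$ is bounded because $r_0v\le r\dot r=\frac12\dot I$ with $\dot I=x_1\dot x_1+2\mu x_2\dot x_2$. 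Here $(x_1\dot x_1)^2\le4x_1^2(U+h)$ is bounded, since $x_1U$ is smooth at binary collisions.
\item With $r,v,w$ bounded (and $u$ taken mod $2\pi$), the orbit has a nonempty compact $\omega$-limit set on which $\phi$ is constant. So that set consists of restpoints of $\cM_0$ with $s=0$, contradicting $r\ge r_0$.
\end{itemize}
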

\begin{proof}
It suffices to consider the behavior for $\tau\ge 0$.   For a triple collision solution in forward time, $v(\tau)<0$ and $v(\tau)\into v_0<0$.  Moreover, $\cos u(\tau)\into \cos(u_c)\ne 0$ where $u_c$ is one of the critical points of $V(u)$. 
Since $r'=vr\cos^2u$, it  follows that $r(\tau)\into 0$ exponentially, that is $r(\tau)\le k\exp(-l\tau)$, for some positive constants $k,l$ and $\tau$ sufficiently large.  Therefore $t'(\tau)\into 0$ exponentially and the integral of (\ref{eq_timechange}) over $0\le \tau<\infty$ is finite.  

For a solution with no forward-time  triple collision, $r(\tau)$  has a positive minimum $r_0>0$ and then begins to increase so we may assume $r(\tau)> r_0$ and $v(\tau)>0$ for $\tau>0$.  We will show that $t(\tau)\into\infty$ as $\tau\into\infty$.   The proof divides into two cases: either there is a constant $\a>0$ and  sequence of times $\tau_n\into\infty$ such that
$\cos^2u(\tau_n)\ge \a$ or else $\cos u(\tau)\into 0$ as $\tau\into \infty$.  Consider the former case.  As long as $\cos^2 u\ge \a/2$ we will have 
$$t'(\tau) =\sqrt{\fr{r}{1+r}}\,r\cos^2u\ge \sqrt{\fr{r_0}{1+r_0}}\,\fr{r_0\a}{2}.$$
As noted above, the energy equation gives a uniform bound for   $|u'| = |w|$.  It follows that there is a uniform $\d>0$ such that $\cos^2u(\tau)\ge \a/2$ for $|\tau-\tau_n|\le \d$ and during each such time interval $t(\tau)$ must increase by at least $\frac{\d\,r_0^\fr32\a}{2\sqrt{1+r_0}}>0$ so $t(\tau)\into\infty$ as $\tau\into\infty$.

Turning to the case that $\cos u(\tau)\into 0$ as $\tau\into\infty$, assume for the sake of contradiction that $t(\tau)\into T< \infty$ as $\tau\into\infty$.  We will show below that $r(\tau)$ and $v(\tau)$ are bounded.  Then consider the solution $[\g(\tau)]$ obtained from $\g$ by replacing $u(\tau)$ by the corresponding point $[u(\tau)]\in \S^1$.  To get the contradiction note that if $r\le K, v\le K$  for some constant $K$ then since $w(\tau)$ is also bounded, $[\g(\tau)]$ will have a nonempty, compact $\w$-limit set.  If $[p]\in\w([\g])$, the Lyapunov function $\phi = \sqrt{1+r}\,v$ must be constant on the orbit of $p$.  But this means $p$ would be restpoint with $r=0$.  Since $r\ge r_0>0$ this is a contradiction.

In remains to show that the assumption that $\cos u(\tau)\into 0$ and $t(\tau)\into T<\infty$ forces $r, v$ to remain bounded.  We may assume that $\cos^2u\le \fr12$ for all $\tau\ge 0$.  Recall that in Jacobi variables
$$I=r^2 = \fr12 x_1^2 + \mu x_2^2  = r^2\cos^2\t+r^2\sin^2\t.$$
It follows that 
$$\fr12 r^2 \le \mu x_2^2 \le r^2.$$
Since   $r_{13}^2 = x_2^2+\fr14 x_1^2$ it will also be bounded above and below by suitable multiples of $r^2$.   

The Euler-Lagrange differential equation for $x_2(t)$ is
$$\mu\,\ddot x_2 = -\fr{2m_3 x_2}{(x_1^2/4+x_2^2)^\fr32}.$$
The right-hand side is continuous at binary collision times so $x_2(t)$ extends to a $C^2$ function for $t\in [0,T)$.  Moreover, by the estimates in the last paragraph
$|\ddot x_2| \le C/r^2 \le C/r_0^2$ for some constant $C>0$.   It follows by integration  that there is are upper bounds 
$$|x_2(t)|\le L\qquad |\dot x_2(t)|\le L\qquad t\in [0,T).$$
Since $r^2\le  2x_2^2$, $r(t)$ is bounded.

To get a bound for $v(t)$, note that 
\begin{equation}\label{eq_v}
\fr12 \dot I = r\dot r = \fr{r r'}{t'(\tau)} = r\sqrt{\fr{1+r}{r}}v\ge r_0 v
\end{equation}
so it suffices to show $\dot I(t)$ is bounded on $[0,T)$.  We have
$$\dot I = x_1\dot x_1 + 2\mu x_2 \dot x_2.$$
We already know that the second term is bounded.  In spite of the binary collisions, the first term is also bounded.  From the Jacobi energy equation we have
$$x_1^2 \dot x_1^2 \le 4 x_1^2(U(x) + h).$$
Note that $x_1 U(x)$ smooth at the binary collision points.  Then the bounds $0<r_0\le r(t) \le K$ for $t\in [0,T)$ show that $(x_1\dot x_1)^2$ is also bounded, as required.
\end{proof}

Using Propositions~\ref{prop_Iestimates} and \ref{prop_texistence} we get the following corollary about convergence of the size variables.
\begin{cor}\label{cor_restimate}
If $\g(\tau)$ is any solution of (\ref{eq_regode}) which does not experience  forward-time triple collision then $r(\tau)\into\infty$ as $\tau\into\infty$.  For non-triple-collision solutions of (\ref{eq_sode}), $s(\tau)\into 1$ as $\tau\into\infty$.  Moreover, given any constant $K<\sqrt{2h}$ we have the estimates
$$r(\tau) \ge Kt(\tau)\qquad 1- s(\tau)\le \fr{1}{1+Kt(\tau)} $$
for all sufficiently large $\tau$.  A similar result holds in backward time.
\end{cor}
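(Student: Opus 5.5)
We work in the classical time $t$ and use Proposition~\ref{prop_Iestimates} together with Proposition~\ref{prop_texistence}. Since $\g$ has no forward-time triple collision, Proposition~\ref{prop_texistence} shows that the regularized classical solution exists for all $t\ge 0$. We also need $t(\tau)\into\infty$ as $\tau\into\infty$. This is exactly what the proof of Proposition~\ref{prop_texistence} establishes: the image $J(\g)$ is unbounded above, and $t(\tau)$ is increasing.

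\textbf{Step 1: the bound on $I$.} Fix $t_0=\tau_0=0$. By (\ref{eq_Iestimates}),
$$I(t)\ge I(0)+\dot I(0)t+2ht^2 .$$
Because $h>0$, the right-hand side is eventually dominated by $2ht^2$.

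\textbf{Step 2: the linear bound on $r$.} Given $K<\sqrt{2h}$, we have $K^2<2h$. Hence $I(0)+\dot I(0)t+2ht^2\ge K^2t^2$ for all $t$ sufficiently large. Taking square roots gives $r(t)\ge Kt$ for large $t$. Substituting $t=t(\tau)$ and using $t(\tau)\into\infty$, we get $r(\tau)\ge Kt(\tau)$ for all sufficiently large $\tau$. In particular $r(\tau)\into\infty$.

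\textbf{Step 3: the bound on $s$.} Since $1-s=\fr{1}{1+r}$, which is decreasing in $r$, the estimate $r\ge Kt$ gives
$$1-s(\tau)\le \fr{1}{1+Kt(\tau)}.$$
It follows that $s(\tau)\into 1$.

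\textbf{Step 4: backward time.} Apply time reversal, $t\mapsto -t$, which preserves the equations up to reversing the velocities. Equivalently, use the inequalities integrated backward: $\ddot I\ge 4h$ gives $I(t)\ge I(0)+\dot I(0)t+2ht^2$ for $t<0$ as well. This yields $r\ge K|t|$ for large negative $t$.

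\textbf{Main obstacle.} The only delicate point is that the divergence $t(\tau)\into\infty$ must hold for every solution without forward triple collision, including the degenerate case $\cos u\into 0$. This is supplied by the second part of the proof of Proposition~\ref{prop_texistence}. The remaining steps are routine.
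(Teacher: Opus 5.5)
Your argument is correct and is essentially the paper's proof: $t(\tau)\into\infty$ comes from Proposition~\ref{prop_texistence}, the quadratic lower bound (\ref{eq_Iestimates}) gives $r\ge Kt$, and then $1-s=\fr{1}{1+r}$ gives the estimate for $s$. One small point is that your inference $K<\sqrt{2h}\Rightarrow K^2<2h$ tacitly assumes $K\ge 0$; the statement implicitly needs this too, since for $K<0$ the right-hand side $\fr{1}{1+Kt}$ becomes negative for large $t$.
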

\begin{proof}
Proposition~\ref{prop_texistence} shows that $t(\tau)\into \infty$ as $\tau\into\infty$ and (\ref{eq_Iestimates}) shows that $r(\tau) \ge \sqrt{2h}\,t(\tau)(1+O(1/t(\tau))$.  By definition
$s=\fr{r}{1+r}$ which gives the estimate for $s(\tau)$.
\end{proof}

In the rest of this section, we will apply some results of Chazy \cite{Chazy} to our regularized isosceles problems.  Chapter II of Chazy's paper contains several interesting theorems about the positive energy three-body problem.  Following Sundman's work, he allows regularization of binary collisions which lead to local series solutions in powers of $(t-t_c)^\fr13$ and then describes the asymptotic behavior of regularized solutions which exist for all positive time.  We have shown that the regularization used here is of the type considered by Chazy and Proposition~\ref{prop_texistence} shows that, except for the triple collisions,  all of our regularized classical solutions exist for all $t\in\R$.

The following proposition summarizes  the main the results in Chazy, Chapter II, section 4.
\begin{proposition}[Chazy]\label{prop_lambda}
Let $x(t)$ be a regularized classical solution which exists for all $t\ge 0$ and let $r_{min}$ and $r_{max}$ denote the smallest and largest of the three mutual distances.  Then there is a constant $\l\in [0,1]$ such that 
$$r_{min}(t)/r_{max}(t)\into \l$$
as $t\into\infty$.  Moreover, $\l$ depends continuously on initial conditions.  A similar result holds in backward time.
\end{proposition}

As an important corollary, we get convergence of the shape variables for regularized solutions without triple collisions
\begin{cor}\label{cor_shapeconvergence}
Let $\g(\tau)$ be a regularized solution which does not have a forward-time triple collision.  Then the shape variables $u(\tau)$  converges to a limit $u_+(\g)$ as $\tau\into\infty$ and $u_+(\g)$ depends continuously on initial conditions.  If $\g(\tau)$ has no backward-time triple collision then there is a limit $u_-(\g)$ as $\tau\into-\infty$.  
\end{cor}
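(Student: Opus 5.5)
The idea is to read off the shape from Chazy's ratio $\l$ of Proposition~\ref{prop_lambda}, which is a function of the isosceles shape, and then use Corollary~\ref{cor_restimate} and continuity of $\l$ to lift convergence of $\l$ to convergence of $u$ in the circle.

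\textbf{Step 1: the shape is a function of $\l$.} By Proposition~\ref{prop_texistence}, a solution with no forward triple collision gives a regularized classical solution defined for all $t\ge 0$. Since $t(\tau)\into\infty$ as $\tau\into\infty$, Proposition~\ref{prop_lambda} gives $r_{min}/r_{max}\into\l$. In the isosceles problem the mutual distances are $d_{12}=\sqrt2\,r\,|\cos\t|$ and $d_{13}=d_{23}=r\,r_{13}(\t)$. The ratio $d_{12}/d_{13}=\sqrt2|\cos\t|/r_{13}(\t)$ depends only on $|\t|$. It is strictly monotone in $|\t|\in[0,\fr\pi2]$: it equals $2$ at $\t=0$ and $0$ at $|\t|=\fr\pi2$, and monotonicity follows from $\cos^2\t/r_{13}^2=1/(\fr12+\tan^2\t/\mu)$. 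Then $r_{min}/r_{max}=\min(\rho,1/\rho)$ with $\rho=d_{12}/d_{13}$. This determines $\rho$ up to inversion, so $|\t|$ is determined up to a two-point ambiguity, namely which side of the equilateral shape $\rho=1$ we are on. To resolve this I would instead apply Chazy's argument, or the continuity in $t$ of the shape, to the quantity $\rho$ itself. The asymptotic analysis behind Proposition~\ref{prop_lambda} (the distances grow like $c_{ij}t$, or one pair stays bounded) shows that each ratio $d_{ij}/d_{kl}$ has a limit in $[0,\infty]$. Granting this, $|\t(\tau)|$ converges.

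\textbf{Step 2: lift to $\t$ and then to $u$.} Since $x_2=r\sin\t/\sqrt\mu$ and $x_2$ is $C^2$ with $|\ddot x_2|=O(1/r^2)=O(1/t^2)$ by Corollary~\ref{cor_restimate}, $\dot x_2$ converges. Hence $x_2/t$ converges, and so does $x_2/r$, because $r/t\into\sqrt{2h}$ by the Lagrange--Jacobi identity with $U\into0$. This pins down $\sin\t$, so $\t$ converges to some $\t_+$, and the sign ambiguity of Step 1 disappears. To pass to $u$, note that $\t(u)$ is a local homeomorphism except at $u\equiv\fr\pi2 \bmod\pi$. If $|\t_+|<\fr\pi2$, then $\cos u$ stays away from $0$ eventually, so no more binary collisions occur and $u(\tau)$ stays on one monotone branch and converges. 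If $\t_+=\pm\fr\pi2$, then $u$ approaches the set $\fr\pi2+\pi\mathbb{Z}$, which is discrete, and connectedness of the path forces convergence to a single point, provided $u$ cannot keep crossing collisions. This is the delicate case. Each crossing changes the branch, which would contradict convergence of $\sin\t$ only marginally. Instead I would use the energy relation at $\cos u=0$, $w^2=(1-s)W$, together with $u'=\fr12\sqrt{1+\cos^2u}\,w$, to show that oscillation across the pole requires $w$ to change sign infinitely often near the pole. I would then rule this out using the $w'$ equation, in which the term $\fr12(v^2-2hs)\sin u\cos u$ dominates.

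\textbf{Step 3: continuity.} Continuity of $u_+$ in the initial conditions follows from continuity of $\l$ (Proposition~\ref{prop_lambda}) in the noncollision region, combined with continuous dependence of $\lim \dot x_2$. The latter holds because the tail bound $\int_t^\infty C/t'^2\,dt'$ is uniform for nearby initial conditions, since the estimate $r\ge Kt$ is locally uniform. Continuity of $\t_+$ then lifts to continuity of $u_+$ via the covering. The backward-time statement follows by time reversal. The main obstacle is the binary-collision limit $\t_+=\pm\fr\pi2$ in Step 2, where both the lifting and the uniform estimates degenerate.
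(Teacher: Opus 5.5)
Your route (Chazy's ratio $\lambda$, then lifting to $u$) is the paper's. However, the case you single out as the main obstacle is resolved by a lemma that is false. When $\theta_+=\pm\frac{\pi}{2}$ you want to show that $u$ cannot keep crossing the pole, by excluding infinitely many sign changes of $w$ near it. Forward hyperbolic-elliptic orbits, which form an open set by Corollary~\ref{cor_lambda0limits}, do exactly this.

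\begin{itemize}
\item For these orbits $x_1\le M$, and $\frac12\ddot x_1=U_{x_1}\le -1/x_1^2$. This gives $\ddot x_1\le -2/M^2$ between collisions, so $m_1,m_2$ collide infinitely often. Each collision is a crossing of $u\equiv\frac{\pi}{2}\bmod\pi$ with $u'\neq 0$.
\item Since $x_2=A_2t+O(\log t)$ has a fixed sign, $\theta$ cannot change sign. So between consecutive collisions $u$ cannot reach the neighboring pole and must turn back; hence $w$ changes sign infinitely often.
\item The term you expect to dominate in the $w'$ equation, $\frac12(v^2-2hs)\sqrt{1+\cos^2u}\,\sin u\cos u$, has $v^2-2hs\to-2h_{12}>0$. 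It is therefore a restoring force, $u''\approx-\frac14(v^2-2hs)(u-\frac{\pi}{2})$. It produces the oscillation rather than excluding it; correspondingly, the eigenvalues $\pm\frac12\sqrt{2h-v^2}$ at the goal-post restpoints are imaginary when $v^2>2h$.
\item For the same orbits, your Step~2 claim that $U\to 0$, hence $r/t\to\sqrt{2h}$, also fails. $U$ is unbounded at every collision, and the limit is $\sqrt{2(h-h_{12})}$.
\end{itemize}

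The repair is to drop the proviso. Your own connectedness remark already finishes the proof, and it is the paper's entire argument. The ratio $g(u)=r_{min}/r_{max}$ is continuous and $2\pi$-periodic on $\R$, and $g^{-1}(\lambda)$ is discrete because a value of $\lambda$ fixes at most four isosceles triangles. So for each $\varepsilon>0$ there is $\delta>0$ such that $\{|g-\lambda|<\delta\}$ lies inside the $\varepsilon$-neighborhood of $g^{-1}(\lambda)$, which is a disjoint union of short intervals. For $T$ large the connected set $u([T,\infty))$ lies in one of these intervals. Hence $u$ converges, however often it recrosses the same pole.

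The same argument resolves the $\rho\leftrightarrow 1/\rho$ and $\pm\theta$ ambiguities directly. So you need neither the ungranted convergence of every $d_{ij}/d_{kl}$ in Step~1 nor the $\dot x_2$ asymptotics in Step~2.

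In Step~3, continuous dependence of $A_2$ is a sensible supplement for continuity of $\theta_+$. But $\theta(u)$ folds at $u\equiv\frac{\pi}{2}\bmod\pi$ and is not a covering map. Continuity of $\theta_+$ therefore fixes $u_+$ only up to a discrete fiber, and choosing the correct lift needs an additional uniformity argument.
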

\begin{proof}
A given ratio $\l = r_{min}/r_{max}\in [0,1]$ determines at most four isosceles triangles.  $\l=0$ corresponds to the two binary collision shapes. $\l=1$ represents the two Lagrange central configuration shapes.  $\l=\fr12$ could represent the Euler central configuration with $r_{13}=r_{23} = \fr12 r_{12}$ or else two acute triangles with $r_{13}=r_{23} = 2r_{12}$.  Any other ratio determines two acute and two obtuse triangles.  It follows that the values $u\in \R$ corresponding to a given ratio $\l$ is a discrete subset of $\R$.
Hence convergence of the ratio implies convergence of $u$.
\end{proof}

In Chapter II,  section 5,  Chazy discusses the case $\l>0$, which means that the limiting shape is not  a binary collision.  Since $r\into\infty$ this implies that all of the mutual distances tend to infinity.  The next proposition summarizes the implications of Chazy's results for the isosceles problem.
\begin{proposition}[Chazy]\label{prop_hyperbolic}
Suppose $\l>0$.  Then there are constants $A_1>0$ and $A_2\ne 0$, depending continuously on initial conditions,  such that the Jacobi velocities satisfy
$$\dot x_1= A_1+O(1/t) \qquad \dot x_2 =  A_2+O(1/t)$$
as $\tau\into\infty$.  The Jacobi variables themselves satisfy
$$x_1= A_1t+O(\log t) \qquad x_2 =  A_2t+O(\log t).$$
\end{proposition}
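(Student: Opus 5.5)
Since $\l>0$, the limiting shape avoids the binary collision shapes. By Corollary~\ref{cor_shapeconvergence} the shape $u(\tau)$ converges to some $u_+$ with $\cos u_+\ne 0$. By Corollary~\ref{cor_restimate}, $r\ge Kt$ for large $t$. Hence for $t\ge t_1$ there are no further binary collisions. Moreover every mutual distance is bounded below by $c\,t$ with $c>0$, because $r_{12}=\sqrt2 r\cos^2u$ and $r_{13}\ge r_{12}/2$ are comparable to $r$. So on $[t_1,\infty)$ the solution is a classical solution of (\ref{eq_odeJacobi}), and the forces satisfy
$$|U_{x_1}|,\;|U_{x_2}| \le \fr{C}{r_{min}^2}\le \fr{C'}{t^2}.$$

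The first step is to integrate the equations of motion. Since $\ddot x_i=O(t^{-2})$ is integrable, $\dot x_i(t)=A_i-\int_t^\infty \ddot x_i = A_i+O(1/t)$ with finite limits $A_i$. Integrating once more gives $x_i = A_i t + O(\log t)$. The $O(1/t)$ term integrates to a logarithm, which is exactly the claimed error.

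The next step is to identify the signs of the constants. The energy equation gives $\normtwo{\dot x}^2\to 2h$ since $U\to0$, so $(A_1,A_2)\ne 0$ and $x/t\to A$. Because $x_1>0$ and $x_1/r\to\sqrt2\cos^2u_+>0$, we get $A_1>0$. The claim $A_2\ne0$ requires $\sin u_+\ne0$, i.e.\ the limit shape is not the Euler collinear shape. This is the step I expect to be delicate. For $A_2=0$ one needs $x_2=o(t)$, so the limit shape is collinear, which is allowed by $\l=\fr12$. I would try to exclude this dynamically. If $x_2/t\to0$ while $x_1\sim A_1t$, then the equation $\mu\ddot x_2=-2m_3x_2/(x_1^2/4+x_2^2)^{3/2}$ is approximately $\ddot x_2\approx -k x_2/t^3$. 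This equation has solutions with $x_2\to$ const or $x_2\sim ct$, so $A_2=0$ does occur for a nongeneric set of orbits, namely those with $x_2\to$ const. Presumably the statement intends $A_2\ne0$ when the limit shape is not Euler, or the paper accepts $A_2$ possibly zero. I would either add the hypothesis $u_+\ne 0$ or prove that case separately with the error term $O(\log t)$ still valid.

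The last step is continuous dependence, which I would obtain from uniformity. For initial data near a given one, continuity of the flow and of $\l$ (Proposition~\ref{prop_lambda}) gives a common time $t_1$ and common constants with $r_{min}\ge c\,t$ for $t\ge t_1$. Then $A_i=\dot x_i(t_1)+\int_{t_1}^\infty \ddot x_i$ is a uniformly convergent integral of functions continuous in the initial data, so it is continuous. The uniform lower bound on $r_{min}$ needs some care. I would get it from the convexity estimate (\ref{eq_Iestimates}) applied uniformly, together with continuity of the limiting shape. The real content is showing the shape stays uniformly away from the collision shape for nearby orbits after a fixed time; Chazy's continuity of $\l$ supplies exactly this.
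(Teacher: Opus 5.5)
The paper gives no proof of this statement; it is simply quoted from Chazy. Your derivation of the expansions is the classical argument and is correct. Since $\lambda>0$, Corollary~\ref{cor_restimate} makes both mutual distances $\ge ct$, so the forces are $O(t^{-2})$. Two integrations then give $\dot x=A+O(1/t)$ and $x=At+O(\log t)$, with $\normtwo{A}^2=2h$ and $A_1>0$.

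Your objection to $A_2\neq 0$ is also correct, and it can be made exact. Since $x/r=(\sqrt2\cos\theta,\sin\theta/\sqrt\mu)$ and $r/t\to\normtwo{A}=\sqrt{2h}$, we get
$$A_1=2\sqrt h\,\cos\theta_+,\qquad A_2=\sqrt{2h/\mu}\,\sin\theta_+ ,$$
so $A_2=0$ exactly when the limiting shape is the Euler shape $\theta_+=0$, where $\lambda=\frac12$. The paper itself contains such orbits:
\begin{itemize}
\item the Euler homothetic orbit of Section~\ref{sec_collisiontoinfinity}, which has $u\equiv 0$ and hence $x_2\equiv 0$;
\item the further orbits of Proposition~\ref{prop_Lplustoinfinity} ending at the Euler point of $\cE_+$;
\item the scattering orbits with $\theta_+=0$ allowed by Proposition~\ref{prop_scatteringcaseII}.
\end{itemize}
Your heuristic that these are exactly the orbits with $x_2\to$ const is right; they form the stable manifold of that Euler point. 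So the statement should read $A_1>0$, $A_2\in\R$. The clause $A_2\neq 0$ seems to have been carried over from Proposition~\ref{prop_lambda0}, where it is true.

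The one genuine gap is in your continuity step. Continuity of $\lambda$ (or of $u_+$) is a statement about limits only. It gives no bound on $r_{min}(t)/t$ for $t\ge t_1$ that is uniform in the initial data, so it does not produce a common $t_1$ and $c$. In particular, it does not rule out nearby orbits approaching a binary-collision shape at times tending to infinity. You need an additional argument, and there are two options.

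\emph{Bootstrap.} Suppose $r_{min}\ge ct$ on $[t_1,T]$. Then $\dot x$ moves by only $O(1/t_1)$, so $x(t)$ stays in a thin cone about $At$. That improves the lower bound on $r_{min}$ and forces $T=\infty$. The data at time $t_1$ are controlled by continuity of the flow on a compact interval.

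\emph{Use the formula above.} Since $A=\sqrt{2h}\,\lim x/r$, continuity of $A$ is the same as continuity of $\theta_+$. Your pointwise asymptotics show the orbit tends to a point of $\cE_+$, so it enters the open set $W^s_{loc}(\cE_+)$ at some finite time. Nearby orbits enter it too, and the projection $W^s_{loc}(\cE_+)\to\cE_+$ of Proposition~\ref{prop_analyticlinearization} is analytic. This gives even analytic dependence of $A$ on initial conditions, with no uniform-in-time estimates needed.
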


From this we can get convergence of the rest of the regularized variables.
 \begin{cor}\label{cor_hyperboliclimits}
For every solution of (\ref{eq_regode}) or (\ref{eq_sode}) with no forward-time triple collision and with $\l>0$, we have 
$$v(\tau) \into \sqrt{2h}\quad w(\tau)\into 0$$
as $\tau\into\infty$.   The solution is called {\em forward-hyperbolic}.  The set of initial conditions leading to a forward-hyperbolic orbit is open.  
\end{cor}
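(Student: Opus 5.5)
Since $\l>0$ the limiting shape is not a binary collision, so by Corollary~\ref{cor_shapeconvergence} $u(\tau)\into u_+$ with $\cos u_+\ne 0$. Proposition~\ref{prop_hyperbolic} gives $x(t)=At+O(\log t)$ and $\dot x(t)=A+O(1/t)$ with $A=(A_1,A_2)$, $A_1>0$. The plan is to translate these asymptotics into the regularized variables $v,w$, and then obtain openness from the continuous dependence on initial conditions in Proposition~\ref{prop_lambda} together with Corollary~\ref{cor_shapeconvergence}.

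\textbf{Step 1: the limits of $v$ and $w$.} From (\ref{eq_v}), $r\dot r=\fr12\dot I=\metrictwo{x,\dot x}$, so $\dot r=\metrictwo{x,\dot x}/\normtwo{x}\into\normtwo{A}$. The energy equation gives $\fr12\normtwo{\dot x}^2=h+U(x)$. Since every mutual distance grows linearly, $U(x)=O(1/t)$, and therefore $\normtwo{A}=\sqrt{2h}$. Next, $v=\psi(r)\dot r$ with $\psi(r)=\sqrt{r/(1+r)}\into 1$ because $r\into\infty$ (Corollary~\ref{cor_restimate}), so $v\into\sqrt{2h}$. For $w$, recall $w=\a\cos u$ with $\a=r\psi(r)\dot\t$, and $r^2\dot\t^2=\normtwo{\dot x}^2-\dot r^2$. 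The cleanest route avoids differentiating the $O(\log t)$ remainder. Since $\cos^2 u_+\ne0$, one can either solve the energy equation (\ref{eq_renergy}) for $w^2$ directly, or compute $r^2\dot\t^2=\normtwo{\dot x}^2-\metrictwo{x,\dot x}^2/\normtwo{x}^2$. In the latter form, $x/\normtwo{x}\into A/\normtwo{A}$ and $\dot x\into A$, so $r^2\dot\t^2\into\normtwo{A}^2-\normtwo{A}^2=0$. Because $\psi\le1$ and $|\cos u|\le1$, this gives $w^2\le r^2\dot\t^2\into0$. The energy relation (\ref{eq_renergy}) provides a consistency check: as $r\into\infty$, $\fr12(v^2+w^2/\cos^2u)\into h$. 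The same limits hold for (\ref{eq_sode}), since only the radial coordinate differs.

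\textbf{Step 2: the limits in $\tau$.} Proposition~\ref{prop_texistence} gives $t(\tau)\into\infty$ monotonically, so limits as $t\into\infty$ are the same as limits as $\tau\into\infty$.

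\textbf{Step 3: openness.} This is the main obstacle. The forward-hyperbolic set consists of the solutions with no forward triple collision and $\l>0$. Suppose we could show that the set of solutions avoiding forward triple collision is open. Then $\l$ would be continuous on that open set by Proposition~\ref{prop_lambda}, and $\{\l>0\}$ would be open within it. Openness of non-collision is more delicate, because orbits near $W^s$ of the starred restpoints do approach $\cM_0$. The argument goes as follows:

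\begin{itemize}
\item The forward-hyperbolic orbit reaches a region with $s$ near $1$, $v$ near $\sqrt{2h}>0$ and $\cos u$ bounded away from zero.
\item Nearby initial conditions reach the same region after the same finite time $\tau$, by continuity of the flow.
\item In that region $v>0$, and $\{v>0\}$ is positively invariant by the proof of Proposition~\ref{prop_triplecollision}. Also $s'=vs(1-s)\cos^2u$ cannot bring the orbit back to $s=0$.
\item Hence these nearby orbits avoid triple collision. Since $\l$ depends continuously on initial conditions and $\l(\g)>0$, we get $\l>0$ nearby as well.
\end{itemize}

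If continuity of $\l$ is only known along non-collision orbits, this argument is exactly what supplies the needed neighborhood.
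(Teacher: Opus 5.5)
Your proof is correct and follows the paper's route. Chazy's asymptotics give $\normtwo{A}=\sqrt{2h}$ and, via (\ref{eq_v}), $v\into\sqrt{2h}$. For the limit of $w$, the paper uses the energy equation (\ref{eq_senergy}) in the form $w^2=2(1-s)W(u)+\cos^2u\,(2hs-v^2)$; your bound $w^2\le r^2\dot\t^2\into 0$ is an equally valid variant. Openness comes from continuity of $\l$ together with closedness of the forward triple-collision set, and your Step~3 supplies the justification of that closedness which the paper only asserts: a non-collision orbit enters the positively invariant set $\{v>0\}$ in finite time, and nearby orbits do as well.
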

\begin{proof}
In the hyperbolic case, all of the mutual distances tend to infinity and so the potential energy satisfies $U(x)\into 0$.  The Jacobi energy equation then shows that the limiting velocity vector
$A=(A_1,A_2)$ satisfies $\normtwo{A}=\sqrt{2h}$.  Also $r(t)^2 = \normtwo{x(t)}^2 = \normtwo{A}^2 t^2+ O(t\log t)$ so
$$r(t) = \normtwo{A}t+O(\log t).$$ 

From (\ref{eq_v}) and Proposition~\ref{prop_hyperbolic} we have
\begin{equation}\label{eq_v2}
v= \sqrt{\fr{r}{1+r}} \dot r =  \sqrt{\fr{r}{1+r}}\fr{\metrictwo{x,\dot x}}{r} =\sqrt{\fr{r}{1+r}}\fr{\normtwo{A}^2t+\ldots}{\normtwo{A}t+\ldots}
\end{equation}
so $v(\tau)\into \normtwo{A} = \sqrt{2h}$.
Using this,  the energy equation (\ref{eq_senergy}) shows that  $w^2 = 2(1-s)W(u) + \cos^2u(2hs - v^2)$ which converges to zero.

Openness of the set of forward-hyperbolic initial conditions follows from the continuity of $\l$ and the fact that the set of initial conditions leading to triple collision is closed.
\end{proof}

Next consider the case $\l=0$.  We have $\cos u(\tau)\into 0$ and $\t\into \pm\fr{\pi}{2}$ which means that the shape of the configuration is a tight binary with $m_1, m_2$ relatively close and $m_3$ far away.   Although the limiting shape is a binary collision, $d_{12} = |x_1|$ does not converge to $0$.
The energy of the $m_1,m_2$ binary
\begin{equation}\label{eq_binaryenergy}
H_{12} =  \fr14\dot x_1^2 -\fr{1}{x_1}
\end{equation}
plays an important role.

The following summarizes some of the relevant results in Chazy, Chapter II, sections 7-9.
\begin{proposition}[Chazy]\label{prop_lambda0}
For regularized classical solutions with no forward-time triple collision and with $\l=0$, there is a constant $A_2\ne 0$, depending continuously on initial conditions,  such that 
$$\dot x_2 =  A_2+O(1/t)$$
as $\tau\into\infty$.  The Jacobi variables  satisfy
$$x_1= O(t^\fr23) \qquad x_2 =  A_2t+O(\log t).$$
The binary energy (\ref{eq_binaryenergy}) converges to a limit  $h_{12} = h- \mu A_2^2 \le 0$.
\end{proposition}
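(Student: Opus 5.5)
The statement is attributed to Chazy, so my aim is not to reprove his asymptotic analysis from scratch. Instead I would show that our regularized isosceles solutions satisfy Chazy's hypotheses, and then derive the isosceles-specific conclusions directly from the Jacobi equations (\ref{eq_odeJacobi}).

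The first step treats the distant body. Since $\l=0$, Corollary~\ref{cor_shapeconvergence} gives $\cos u(\tau)\into 0$, so eventually $\cos^2u\le \fr12$. As in the proof of Proposition~\ref{prop_texistence}, this yields $\fr12 r^2\le \mu x_2^2\le r^2$ and $r_{13}^2\ge c\,r^2$. By Corollary~\ref{cor_restimate}, $r\ge Kt$ for large $t$. The $x_2$ equation
$$\mu\ddot x_2=-\fr{2m_3x_2}{(x_1^2/4+x_2^2)^{3/2}}$$
is continuous through binary collisions, and its right-hand side is $O(1/r^2)=O(1/t^2)$. This function is integrable on $[t_1,\infty)$, so $\dot x_2\into A_2$ with $\dot x_2-A_2=O(1/t)$, and integrating gives $x_2=A_2t+O(\log t)$. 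Since $|x_2|\ge r/\sqrt{2\mu}\ge Kt/\sqrt{2\mu}$, we get $A_2\neq0$. For continuous dependence I would invoke Chazy's theorem, or argue directly: on $[t_1,\infty)$ the tail bound $\int_{t_1}^\infty C/t^2\,dt$ is uniform for nearby initial conditions that also have $\l=0$.

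The second step treats the binary. I would write $H_{12}=\fr14\dot x_1^2-\fr1{x_1}$ and use the total energy to get $H_{12}=h-\fr12\mu\dot x_2^2+\fr{2m_3}{r_{13}}$. Both correction terms converge, since $\dot x_2\to A_2$ and $r_{13}\to\infty$, so $H_{12}\into h_{12}=h-\fr12\mu A_2^2$. (The statement's normalization of $\mu A_2^2$ should be checked against the mass norm $\normtwo{\cdot}$, since the kinetic term is $\fr12\mu\dot x_2^2$.) The sign $h_{12}\le 0$ is the subtle part. Suppose instead $h_{12}>0$. Then $\dot x_1^2\ge 4h_{12}>0$ eventually, so $x_1$ grows linearly and $x_1/x_2$ tends to a positive limit, contradicting $\l=0$. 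Hence $h_{12}\le0$.

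The third step is the bound $x_1=O(t^{2/3})$, which I expect to be the main obstacle. When $h_{12}<0$ the binary is bounded, so $x_1=O(1)$ and the bound is trivial. The difficult case is $h_{12}=0$, a parabolic binary, where one must show $x_1$ grows no faster than $t^{2/3}$. The binary satisfies $\fr12\ddot x_1=-\fr1{x_1^2}+O(x_1/r_{13}^3)$. I would compare this with the Kepler parabolic solution, whose growth is $x_1\sim ct^{2/3}$, using $\fr14\dot x_1^2=H_{12}+1/x_1$ with $H_{12}=O(1/t)$. This gives $|\dot x_1|\le 2\sqrt{1/x_1+C/t}$, and integrating this differential inequality yields $x_1=O(t^{2/3})$. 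I would carry out that integration carefully and otherwise defer to Chazy, Chapter II, sections 7–9, for the full statement.
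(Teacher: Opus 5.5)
Your argument is correct, but it takes a different route from the paper. The paper gives no proof and simply quotes Chazy, Ch.~II, sections 7--9. You instead rederive everything except continuous dependence from the isosceles Jacobi equations.

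What makes this elementary is special to the isosceles case: $x_2$ satisfies a scalar equation that is continuous through binary collisions and has force $O(1/x_2^2)$. Once $|x_2|\ge ct$ (from $\cos u\to 0$ and Corollary~\ref{cor_restimate}), two integrations give $\dot x_2=A_2+O(1/t)$, $x_2=A_2t+O(\log t)$ and $A_2\ne 0$. The splitting $H_{12}=h-\frac12\mu\dot x_2^2+2m_3/r_{13}$ then yields the remaining conclusions:
\begin{itemize}
\item $H_{12}$ converges at rate $O(1/t)$.
\item $h_{12}\le 0$. For this you only need $x_1\ge ct$ against $x_2=O(t)$, so that $\liminf x_1/r>0$; you do not need a limit of $x_1/x_2$.
\item In the parabolic case, $x_1=O(t^{2/3})$, by a bootstrap on $y=x_1^{3/2}$ with $\dot y\le 3\sqrt{1+Cy^{2/3}/t}$.
\end{itemize}

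Your normalization check is right. The correct value is $h_{12}=h-\frac12\mu A_2^2$, which is exactly what the paper uses in the proof of Corollary~\ref{cor_lambda0limits} ($h=h_{12}+\frac12\mu A_2^2$). So the displayed $h-\mu A_2^2$ in the statement is a typo.

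The citation gives the paper generality and, above all, the continuity assertion. Your route gives a transparent, essentially self-contained proof of the asymptotics in the isosceles setting.

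If you want continuity directly as well, your sketch is incomplete as written. The tail bound $\int_{t_1}^\infty C/t^2\,dt$ is only uniform if $t_1$ and $C$ work for all nearby initial data. You have not shown that $\cos^2u\le\frac12$ (or $r\ge Kt$) holds from a common time onward for all of them.

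This is easy to supply without restricting to $\lambda=0$:
\begin{itemize}
\item Since $r_{13}\ge|x_2|$, the quantity $\frac12\dot x_2^2-\frac{2m_3}{\mu|x_2|}$ is nondecreasing whenever $x_2\dot x_2>0$.
\item The condition that this quantity is positive and $x_2\dot x_2>0$ at time $t_1$ is open. Your orbit satisfies it for large $t_1$, because $\dot x_2\to A_2\ne 0$.
\item It forces uniform linear growth of $|x_2|$ for all nearby orbits, hyperbolic ones included. That gives a uniform $O(1/t^2)$ bound on $\ddot x_2$, and hence continuity of $A_2=\lim\dot x_2$.
\end{itemize}
This is the same escape-velocity device the paper uses in the proof of Proposition~\ref{prop_Lplustoinfinity}.
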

The solution will called {\em forward-time hyperbolic-parabolic} or {\em hyperbolic-elliptic} according to whether $h_{12}=0$ or $h_{12}<0$.  In the parabolic case, 
$x_1(\tau)\into\infty$ as $\tau\into\infty$ while in the elliptic case, $x_1$ is bounded.

The implications of Proposition~\ref{prop_lambda0} for the blown-up variables are as follows:
 \begin{cor}\label{cor_lambda0limits}
For every solution of (\ref{eq_regode}) or (\ref{eq_sode}) with no forward-time triple collision and with $\l=0$, we have 
$$v(\tau) \into \sqrt{2(h-h_{12})}\quad w(\tau)\into 0$$
as $\tau\into\infty$.    The set of initial conditions leading to a forward-hyperbolic elliptic orbits is open.  
\end{cor}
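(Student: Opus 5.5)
Following the proof of Corollary~\ref{cor_hyperboliclimits}, I would derive the limits from Chazy's asymptotics (Proposition~\ref{prop_lambda0}) together with Corollary~\ref{cor_restimate}, and then obtain openness from the energy equation of the binary. The first goal is the radial velocity. By (\ref{eq_v2}),
$$v=\sqrt{\fr{r}{1+r}}\,\fr{\metrictwo{x,\dot x}}{r}, \qquad \metrictwo{x,\dot x}=\fr12 x_1\dot x_1+\mu x_2\dot x_2 .$$
Proposition~\ref{prop_lambda0} gives $\mu x_2\dot x_2=\mu A_2^2 t+O(\log t)$ and $x_1=O(t^{2/3})$, so $r^2=\mu A_2^2t^2+O(t\log t)$. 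Hence $r=\sqrt\mu|A_2|t+O(\log t)$.

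The term $x_1\dot x_1$ needs to be controlled, and I expect this to be the main obstacle, since $\dot x_1$ is not bounded near binary collisions. I would use the binary energy (\ref{eq_binaryenergy}). Since $H_{12}\into h_{12}$, we have $\fr14\dot x_1^2=H_{12}+1/x_1$, and therefore
$$(x_1\dot x_1)^2=4x_1^2H_{12}+4x_1=O(t^{4/3}).$$
This gives $x_1\dot x_1=O(t^{2/3})=o(t)$. As in the proof of Proposition~\ref{prop_texistence}, this bound is valid through the collisions because $x_1U$ is smooth there. Consequently $\metrictwo{x,\dot x}=\mu A_2^2t+o(t)$, and so
$$v\into \fr{\mu A_2^2}{\sqrt\mu|A_2|}=\sqrt\mu\,|A_2|=\sqrt{2(h-h_{12})},$$
using $h_{12}=h-\mu A_2^2$ and $s\into1$. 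The argument relies on $A_2\neq0$, which Proposition~\ref{prop_lambda0} provides.

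For $w$, I would use $w=\a\cos u$, where $\a=r\psi\dot\t$. Instead I plan to use the energy equation (\ref{eq_senergy}), written as
$$w^2=2(1-s)W(u)+\cos^2u\,(2hs-v^2).$$
We have $1-s\into0$, $W$ is bounded, $\cos u\into0$ (because $\l=0$ and the shape converges to a binary collision shape by Corollary~\ref{cor_shapeconvergence}), and $v$ is bounded. Together these give $w\into0$.

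For openness I would argue as follows. Proposition~\ref{prop_lambda0} makes $h_{12}=h-\mu A_2^2$ continuous on the set where $\l=0$. This is not enough on its own, because the set $\{\l=0\}$ need not be open. A better route is to show directly that a hyperbolic-elliptic solution has a tight binary with negative energy and $m_3$ far away. By Chazy-type perturbation estimates, namely the perturbation of $H_{12}$ is $O(1/x_2^2)$ and is integrable in $t$, nearby orbits keep $H_{12}<0$ and a bounded $x_1$ while $x_2$ grows linearly. These nearby orbits therefore also have $\l=0$ and $h_{12}<0$.

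Concretely, I would write $\dot H_{12}=\fr12\dot x_1(U_{x_1}-\partial_{x_1}(1/x_1))$, which involves only the $m_3$ interaction, and bound it by $C/x_2^2$. From there a standard trapping-region argument applies: choose a time $T$ at which $H_{12}<h_{12}/2$, $x_2$ is large and $\dot x_2$ is close to $A_2$. These conditions persist for all later times, and by continuous dependence on initial conditions up to time $T$ they hold for all nearby initial data.
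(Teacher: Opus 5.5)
Your treatment of the limits follows the paper's proof. You insert Chazy's asymptotics into $v=\sqrt{r/(1+r)}\,\metrictwo{x,\dot x}/r$, and then use $w^2=2(1-s)W(u)+\cos^2u\,(2hs-v^2)$ with $\cos u\into 0$. Your bound $(x_1\dot x_1)^2=4x_1^2H_{12}+4x_1=O(t^{4/3})$ controls the $\fr12 x_1\dot x_1$ term, which the paper passes over.

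The last equality, however, does not follow from what you cite. With $h_{12}=h-\mu A_2^2$, as printed in Proposition~\ref{prop_lambda0}, you would get $\sqrt\mu|A_2|=\sqrt{h-h_{12}}$, not $\sqrt{2(h-h_{12})}$. That printed relation is off by a factor of $2$. The correct relation, which the paper's proof of this corollary derives and which you need, comes from the energy equation. Write it as $h=H_{12}+\fr12\mu\dot x_2^2-2m_3/d_{13}$ and let $t\into\infty$, using $d_{13}\ge|x_2|\into\infty$; this gives $\mu A_2^2=2(h-h_{12})$. A harmless slip as well: $r^2=\mu A_2^2t^2+O(t^{4/3})$, not $O(t\log t)$, because the $\fr12 x_1^2$ term dominates.

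For openness you take a genuinely different route, and it is better than the paper's. The paper appeals to continuity of $\l$ and closedness of the triple-collision set. That only shows $\{\l>0\}$ is open, which is the argument suited to Corollary~\ref{cor_hyperboliclimits}. As you observe, $\{\l=0\}$ is closed rather than open, so a trapping argument like yours is what actually proves the claim. To make it airtight, fix the following details.
\begin{itemize}
\item The correct derivative is $\dot H_{12}=\dot x_1(U_{x_1}+1/x_1^2)=-m_3x_1\dot x_1/(2d_{13}^3)$, with no factor $\fr12$ in front.
\item On the region $\{H_{12}\le h_{12}/2\}$ you have $x_1\le 2/|h_{12}|$ and $(x_1\dot x_1)^2\le 4x_1$, so $|\dot H_{12}|=O(|x_2|^{-3})$.
\item Meanwhile $|\mu\ddot x_2|\le 2m_3/x_2^2$ keeps $|x_2|$ growing linearly. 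Hence once $|x_2|\ge R$, the total drift of $H_{12}$ and $\dot x_2$ is $O(1/R)$, and the region is invariant given initial margins.
\item For continuous dependence, fix a regularized time $\tau_0$ rather than a classical time $T$, since binary collisions of nearby orbits shift in $t$. Note that $x_2$, $\dot x_2$ and $H_{12}=h-\fr12\mu\dot x_2^2+2m_3/d_{13}$ are continuous on the regularized phase space.
\end{itemize}
Trapped orbits then have bounded $x_1$ and $|x_2|\into\infty$. Hence they have no triple collision, $\l=0$, and limiting binary energy at most $h_{12}/2<0$.
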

\begin{proof}
The distance $r_{13}$ tend to infinity and so $|U(x)-\fr{1}{x_1}|\into 0$.  Then Jacobi energy equation gives
$h = h_{12} +\fr12 \mu A_2^2$ so $\mu A_2^2= 2(h-h_{12})$

We have $r(t)^2 = \normtwo{x(t)}^2 = \mu A_2^2 t^2+O(t^\fr43)$ so
$$r(t) = \sqrt{\mu}A_2t+O(t^\fr23).$$ 
From (\ref{eq_v2}) and Proposition~\ref{prop_lambda0} we have
$$v= \sqrt{\fr{r}{1+r}}\fr{\metrictwo{x,\dot x}}{r} =\sqrt{\fr{r}{1+r}}\fr{\mu A_2^2 t+\ldots}{ \sqrt{\mu}A_2t++\ldots}$$
so $v(\tau)\into \normtwo{A} = \sqrt{\mu}A_2 = \sqrt{2(h-h_{12})}$.
Using this,  the energy equation (\ref{eq_senergy}) shows that  $w^2 = 2(1-s)W(u) + \cos^2u(2hs - v^2)$.  This time we have $\cos u\into0$ so again $w^2\into 0$.

Openness of the set of forward-hyperbolic-elliptic initial conditions follows from the continuity of $\l$ and the fact that the set of initial conditions leading to triple collision is closed.
\end{proof}

\section{The Infinity Manifold and the scattering relation}\label{sec_infinitymanifold}
According to Corollary~\ref{cor_restimate}, solutions $\g(\tau)$ of (\ref{eq_sode}) without triple collisions  have $s(\tau)\into 1$.
The invariant set $\cM_\infty=\{s=1\}$ will be called the {\em infinity manifold}.
Setting $s=1$ in (\ref{eq_sode}) gives 
\begin{equation}\label{eq_infinityode}
\begin{aligned}
s' &= 0\\
v' &=w^2\\
u' &= \fr12 \sqrt{1+\cos^2u}\,w\\
w' &=-vw\cos^2u  +\fr12 \left(v^2-2h \right)\sqrt{1+\cos^2u}\sin u\cos u
\end{aligned}
\end{equation}
with energy equation
\begin{equation}\label{eq_infinityenergy}
\fr12(v^2\cos^2u+w^2)=h \cos^2u.
\end{equation}
Note that the Newtonian potential has disappeared.  In this limit of infinitely large configurations, the interparticle forces have vanished.  But the  binary collisions are still having an implicit effect due to the use of regularized variables.

This time the energy equation determines a two-dimensional variety with singularities (see Figure~\ref{fig_infinitymanifold}).  The part with $u \ne \pm\fr{\pi}{2}\bmod \pi$ is a smooth surface.  The lines at $u= \pm\fr{\pi}{2}, w=0$ are contained in the variety and are singular points for the surface and restpoints of (\ref{eq_infinityode}).  

\begin{figure}[h]
\scalebox{0.5}{\includegraphics{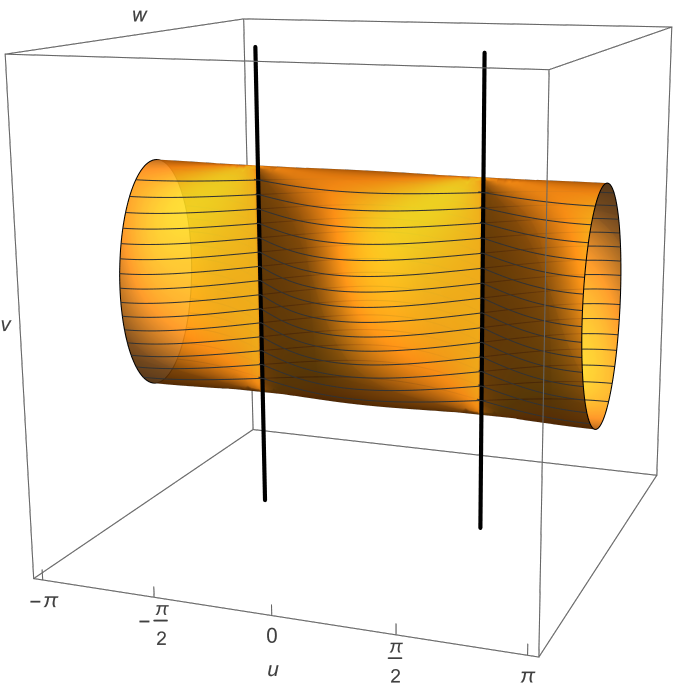}}
\caption{Infinity ``manifold'' $\cM_\infty$ for the isosceles three-body problem with $m_3=1$.}
\label{fig_infinitymanifold}
\end{figure}

The entire energy manifold can be visualized as the solid region in $(u,w,v)$-space trapped between the triple collision manifold and the infinity manifold.  Figure~\ref{fig_bothmanifolds} shows  the projection of the solution set of (\ref{eq_infinityenergy}).  The outer surface is the collision manifold with $s=0$ and the inner one is the infinity manifold.  One can show that, at least for energy $h=\fr12$ the surfaces with values of $s\in [0,1]$ are nested.  

\begin{figure}[h]
\scalebox{0.5}{\includegraphics{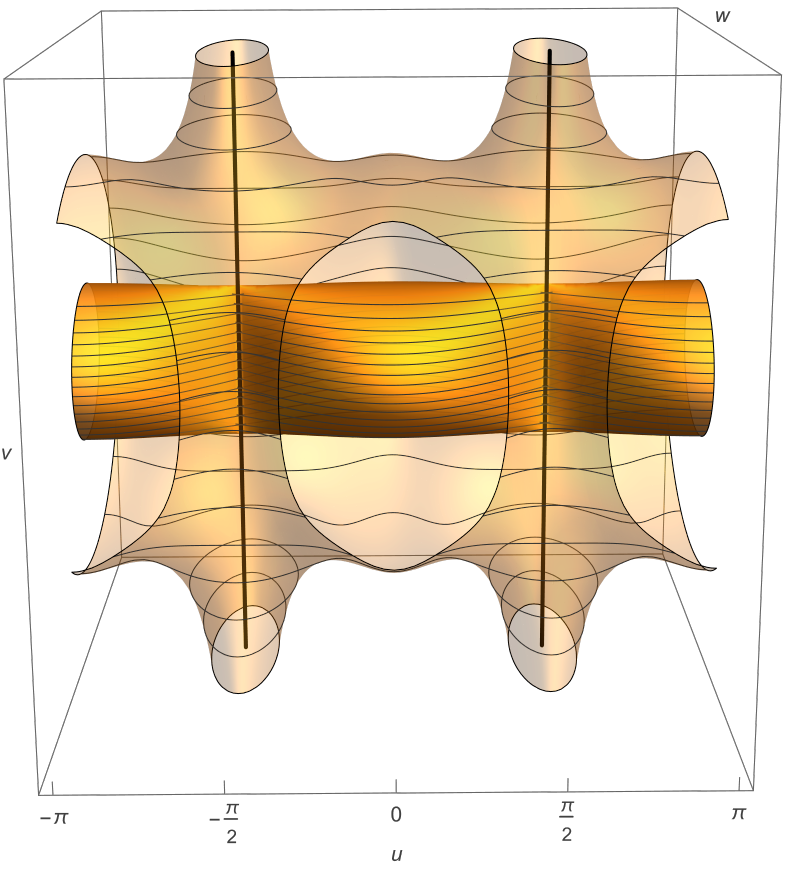}}
\caption{Infinity manifold inside triple collision manifold for $m_3=1$ and $h=\fr12$.}
\label{fig_bothmanifolds}
\end{figure}

Consider the differential equation (\ref{eq_infinityode}) on the infinity manifold.  The vertical lines $\t=\pm\fr{\pi}{2}, w=0$ consist entirely of equilibrium points.   These points are limits of phase space points with $r\into\infty$ and normalized shapes tending to the binary collision shapes.  This would include actual binary collisions points with large $r$ but also points where the Jacobi coordinates have $x_1>0$ but $x_1/r\into \infty$.   Our blown-up coordinates and timescale make all of these ``binary collisions at infinity" equilibrium points but for actual binary collisions with $0\le s<1$ we still have $u' =w\ne 0$.  The horizontal lines $v=\pm \sqrt{2h}, w=0$ on the surface connecting the two vertical ones are also made up of equilibria.  

Corollaries~\ref{cor_shapeconvergence}, \ref{cor_hyperboliclimits} and \ref{cor_lambda0limits} give the asymptotic behavior of solutions without triple collisions.
\begin{proposition}\label{prop_asymptoticbehavior}
Let $\g(\tau)$ be any solution of (\ref{eq_sode}) which does not have a forward-time triple collision.  Then as $\tau\into\infty$, $\g(\tau)$ converges to one of the restpoints on the infinity manifold with $v\ge \sqrt{2h}$.  More precisely, if $\g(\tau) = (s(\tau),v(\tau),\t(\tau),w(\tau))$ we have $s(\tau)\into 1$, $w\into 0$ and
$$ 
\begin{array}{ccc}
v(\tau)\into \sqrt{2h}  &u(\tau)\into u_+(\g)\in (-\fr{\pi}{2},\fr{\pi}{2})\bmod \pi & \text{if }\l(\g)>0\\
v(\tau)\into \sqrt{2(h-h_{12})} &u(\tau)\into u_+(\g)\equiv\pm\fr{\pi}{2}\bmod \pi & \text{if }\l(\g)=0.
\end{array}
$$
The limiting restpoint depends continuously on initial conditions.  Similarly, if $\g(\tau)$ has no triple collision in backward time, it converges as $\tau\into-\infty$ to restpoint with $\t=\t_-(\g)$ and with $v = -\sqrt{2h}$ or $v=-\sqrt{2(h-h_{12})}$ depending on the backward time limit $\l$.
\end{proposition}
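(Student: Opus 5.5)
The plan is to assemble the statement from the corollaries already proved; no new analysis is needed beyond checking a few compatibility points. Let $\g(\tau)$ have no forward-time triple collision. Corollary~\ref{cor_restimate} gives $s(\tau)\into 1$. Corollary~\ref{cor_shapeconvergence} gives $u(\tau)\into u_+(\g)$, with continuous dependence on initial conditions. Proposition~\ref{prop_lambda} gives the limiting ratio $\l(\g)\in[0,1]$. I then split into the cases $\l>0$ and $\l=0$.

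If $\l>0$, the limiting shape is not a binary collision shape, so $\cos u_+\ne 0$, i.e.\ $u_+\in(-\fr\pi2,\fr\pi2)\bmod\pi$. Corollary~\ref{cor_hyperboliclimits} gives $v\into\sqrt{2h}$ and $w\into 0$.

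If $\l=0$, the shape tends to a binary collision shape, so $u_+\equiv\pm\fr\pi2\bmod\pi$. Corollary~\ref{cor_lambda0limits} gives $v\into\sqrt{2(h-h_{12})}$ and $w\into0$. Since $h_{12}\le 0$ by Proposition~\ref{prop_lambda0}, in both cases $v_+\ge\sqrt{2h}$.

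In each case the limit point $(1,v_+,u_+,0)$ lies in $\cM_\infty$ and is a restpoint of (\ref{eq_infinityode}). For $\l>0$ one checks that $v=\sqrt{2h}$, $w=0$ makes the right-hand side vanish, so these points lie on the horizontal lines of equilibria. For $\l=0$ one has $\cos u=0$, $w=0$, so these points lie on the vertical lines of equilibria.

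For continuity of the limiting restpoint I would argue componentwise:
\begin{itemize}
\item $s_+=1$ and $w_+=0$ are constant.
\item $u_+$ is continuous by Corollary~\ref{cor_shapeconvergence}.
\item $v_+$ is given by $\sqrt{2h}$ on the open set $\{\l>0\}$ and by $\sqrt{2(h-h_{12})}=\sqrt{\mu}|A_2|$ on $\{\l=0\}$.
\end{itemize}
$A_2$ depends continuously on initial conditions on each piece by Propositions~\ref{prop_hyperbolic} and \ref{prop_lambda0}.

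The main obstacle is continuity of $v_+$ across the boundary between the hyperbolic and hyperbolic-parabolic regimes. There $h_{12}\into 0$, i.e.\ $\mu A_2^2\into 2h$, which would make the two formulas agree. At a hyperbolic-elliptic limit point, $\l=0$ is an interior condition by Corollary~\ref{cor_lambda0limits}, so there is no issue there. I would try to get the matching at the parabolic boundary from Chazy's continuity statement for $A_2$ together with the energy relation $\fr12\normtwo{A}^2=h$ in the hyperbolic case, noting that $A_1\into 0$ as $\l\into 0$. If this matching proves delicate, I would interpret ``continuously'' as continuity of the limit in $u_+$ together with continuity within each asymptotic class, which is what the cited results directly provide.

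The backward-time statement follows by the time-reversal symmetry $(\tau,v,w)\mapsto(-\tau,-v,-w)$ of (\ref{eq_sode}). This symmetry sends $v_+$ to $-v_+$, giving the limits $-\sqrt{2h}$ or $-\sqrt{2(h-h_{12})}$.
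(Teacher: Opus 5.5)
Your proposal is correct and matches the paper, which gives no separate proof and simply reads the proposition off from Corollaries~\ref{cor_restimate}, \ref{cor_shapeconvergence}, \ref{cor_hyperboliclimits} and \ref{cor_lambda0limits} and their backward-time versions, as you do. The continuity of $v_+$ you flag as the main obstacle is not actually delicate. At a hyperbolic-parabolic orbit both formulas already give $\sqrt{2h}$, and hyperbolic neighbors also have $v_+=\sqrt{2h}$. Neighbors with $\l=0$ are handled by continuity of $A_2$ on that class, so you need neither the $A_1\into 0$ argument nor any weakening of ``continuously''.
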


Since $h_{12}\le 0$, the possible forward-time limits are the restpoints in Figure~\ref{fig_infinitymanifold} which are on or above the horizontal line segments at $v=\sqrt{2h}$.  Hyperbolic orbits, that is, orbits with $\l(\g)>0$ have limits on the open horizontal segment and nonhyperbolic ones have limits on the vertical ``goal posts'' at $u=\pm\fr{\pi}{2}\bmod \pi$ with $v\ge \sqrt{2h}$.   In backward time the limit points are on or below the horizontal segments at $v=-\sqrt{2h}$.

 If $\g(\tau)$ has no triple collisions in either time direction then it is a heteroclinic orbit between two restpoints on the infinity manifold, one with $v\le -\sqrt{2h}$ and one
 with $v\ge\sqrt{2h}$.  This lead us to the {\em scattering problem}, the main subject of this paper.  
 \begin{definition}
 Let $p_\pm$ be two equilibrium points on the infinity manifold.  A heteroclinic orbit $\g(\tau)$ with $\lim_{\tau\into\pm\infty}= p_{\pm}$ will be called a {\em scattering solution} from $p_-$ to $p_+$.  If such a solution exists then $p_\pm$ and also the shapes $u_{\pm}$ will be said to be {\em related by scattering}.  The same terminology will be applied to the corresponding angles $\t_\pm =\t(u_\pm)\in (-\fr{\pi}{2},\fr{\pi}{2})$.
 \end{definition}
Using the angular variable $\t(u)$ instead of $u$ is a way to uniquely specify the shape (recall Figure~\ref{fig_isoscelesshapes}).
The main question of interest here is:  which asymptotic shapes are related by scattering ?

\section{Flow at Infinity}\label{sec_flowatinfinity}
Our blown-up coordinates are particularly well adapted to study hyperbolic scattering, that is scattering from $\t_-$ to $\t_+$ where both shapes  in $(-\fr{\pi}{2}, \fr{\pi}{2})$.  We begin by studying the linearization at the corresponding equilibrium points  on $\cM_\infty$.  For $\t_0\in(-\fr{\pi}{2}, \fr{\pi}{2})$ there will be a unique representative $u_0\in(-\fr{\pi}{2}, \fr{\pi}{2})$ with $\t(u_0)=\t_0$.  Let $p = (s,v,u,w)=(1,\sqrt{2h},u_0,0)$ be the corresponding equilibrium point with $v=\sqrt{2h}$ in the infinity manifold.  Away from binary collision, the differential equations (\ref{eq_sode}) can be simplified using the energy equation:
\begin{equation}\label{eq_sodesimple}
\begin{aligned}
s' &= vs(1-s)\cos^2u\\
v' &= (1-s)\cos^2u(\fr12v^2-V(u)) +w^2 \\
u' &= \fr12 \sqrt{1+\cos^2u}\,w\\
w' &=-\fr12(1+s)vw\cos^2u +\fr12\sqrt{1+\cos^2u}\left((1-s)\cos^2uV_u(u)-w^2\tan u\right)
\end{aligned}
\end{equation}
The matrix of the linearization of these equations at $p$ has a simple structure:
$$A=\m{-\sqrt{2h}\cos\t_0&0&0&0\\
(V(u_0)-h)\cos^2u_0&0&0&0&\\
0&0&0&\sqrt{1+\cos^2u_0}\\
a_{41}&0&0&-\sqrt{2h}\cos^2u_0
}
$$
where $a_{41}=-\fr12V_u(u_0)\cos^2u_0\sqrt{1+\cos^2u_0}$.
The equation of the tangent space to the energy manifold at $p$ is also very simple:
$$(V(u_0)-h)\delta s+\sqrt{2h}\delta v=0.$$
The eigenvalues of the restriction to the tangent space are $0,\a,\a$ where $\a=-\sqrt{2h}\cos\t_0$.
In the basis 
$$e_1=(0,0,1,0)\quad  e_2=(0,0,\sqrt{1+\cos^2u_0},\a)\quad  e_3=(\a,(V(u_0)-h)\cos^2u_0,0,a_{41})$$
 the matrix  of the restriction of $A$ is
$$\m{0&0&0\\
0&\a&-V_\t(\t_0)\\
0&0&\a
}.
$$
The zero eigenvalue and eigenvector are explained by the fact that $p$ is part of a line segment of  restpoints $(1,\sqrt{2h},u,0)$, $u\in (-\fr{\pi}{2}, \fr{\pi}{2})$.   The other two eigenvalues are attracting and equal with a nontrivial Jordan block except at the critical points of $V(u)$.  The attracting eigenvector $e_2$ is tangent to the infinity manifold.  
This suggests that each individual restpoint $p$ is normally attracting with a two-dimensional stable manifold in the three-dimensional energy space.  That this is indeed the case follows from the following result.
\begin{proposition}\label{prop_analyticlinearization}
Let $\cE_+=\{(s,v,u,w)=(1,\sqrt{2h},\t,0), u\in (-\fr{\pi}{2}, \fr{\pi}{2})\}$ be the line segment of hyperbolic equilibria in the fundamental region of the infinity manifold.  The flow in the neighborhood of each equilibrium point $p$ can be analytically linearized.  $\cE_+$ is a one-dimensional normally attracting invariant manifold of restpoints.  It has a local stable manifold $W^s_{loc}(\cE_+)$ which is an open neighborhood of $\cE_+$ in the three-dimensional  energy manifold and which is foliated into the two-dimensional local stable manifolds of the individual restpoints.    Similarly the set of restpoints $\cE_-$ with $v=-\sqrt{2h}$ is a normally repelling invariant manifold with an open  local unstable manifold, foliated into the unstable manifolds of its restpoints. 
\end{proposition}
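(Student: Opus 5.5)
The plan is to reduce to a normal form near each point of $\cE_+$ where the problem splits into a one-dimensional center direction (along $\cE_+$) and a two-dimensional hyperbolic part with two equal negative eigenvalues $\a=-\sqrt{2h}\cos\t_0$. Fix $p=(1,\sqrt{2h},u_0,0)$ with $u_0\in(-\fr{\pi}{2},\fr{\pi}{2})$. Near $p$ we have $\cos u$ bounded away from zero, so the simplified system (\ref{eq_sodesimple}) is analytic. On the three-dimensional energy manifold I would use the analytic coordinates $(\rho,u,w)$ with $\rho=1-s$, solving the energy equation (\ref{eq_senergy}) for $v$ near $\sqrt{2h}$. In these coordinates the restpoints form the curve $\{\rho=w=0\}$, and every point of it is an equilibrium. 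I would then straighten this curve and view $u$ as a parameter: the system becomes
$$\dot\xi = A(u)\xi+O(|\xi|^2),\qquad \dot u = O(|\xi|)$$
where $\xi=(\rho,w)$ and $A(u)$ is the $2\times2$ block computed above, whose eigenvalues are $\a(u),\a(u)$ with $\a(u)<0$.

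The key steps are as follows.

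\textbf{Step 1 (invariant fibers).} Since every point of the curve is an equilibrium and the normal spectrum $\{\a,\a\}$ is strictly negative while the tangential eigenvalue is $0$, this is a normally hyperbolic (attracting) manifold of equilibria. Fenichel theory, or the analytic version for manifolds of equilibria, gives a foliation of a neighborhood by stable fibers $W^s(p)$. Each fiber is two-dimensional and analytic in $p$, and together the fibers fill an open set, which is $W^s_{loc}(\cE_+)$.

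\textbf{Step 2 (analytic linearization).} I would straighten the fibers so that $u$ becomes a constant of motion, namely the fiber label. On each fiber this leaves a two-dimensional analytic system $\dot\xi=A(u)\xi+\dots$ depending analytically on the parameter $u$. Its eigenvalues $\a,\a$ lie in the Poincaré domain, since $0$ is not in their convex hull. The only possible resonances are $\a=m_1\a+m_2\a$ with $m_1+m_2\ge2$, which would require $\a=0$, so there are none. Poincaré's theorem then linearizes each fiber analytically, and the Jordan block $\m{\a&-V_\t\\0&\a}$ is harmless because the theorem allows non-diagonalizable linear parts. The coefficients of the Poincaré normal-form series depend analytically on $u$, and the majorant estimates are uniform on compact sets of $u$. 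Together this gives an analytic linearization of the full flow near $p$ of the form $\dot u=0$, $\dot\xi=A(u)\xi$.

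\textbf{Step 3 (the $\cE_-$ case).} The statement for $\cE_-$ follows by time reversal $(\tau,v,w)\mapsto(-\tau,-v,-w)$, which preserves (\ref{eq_sode}) and (\ref{eq_senergy}).

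I expect the main obstacle to be doing Step 2 uniformly in the parameter $u$ with the zero eigenvalue present. The center direction could in principle produce resonances of the type $\a=\a+k\cdot0$ involving the $u$-variable. These resonances are avoided only because the whole curve consists of equilibria, so the right-hand side of $\dot u$ vanishes on it. It is therefore essential to straighten the fibers first, making $u$ an exact first integral, rather than applying Poincaré–Dulac in all three variables at once. I would cite the analytic linearization result of \cite{DMMY} for normally hyperbolic manifolds of equilibria as the model, and check that its hypotheses (analyticity and a negative normal spectrum bounded away from zero on compact subsets of $\cE_+$) hold here.
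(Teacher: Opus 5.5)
Your argument is correct. It also does more than the paper: the paper gives no argument and simply defers to the general $n$-body result of \cite{DMMY}, which is the citation you mention at the end.

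You instead prove the statement locally in the paper's own coordinates. You take the straight curve of equilibria $\{\rho=w=0\}$, obtain the stable fibers, and use the fiber label as an exact first integral. You then linearize each fiber by Poincar\'e's theorem with analytic parameter dependence; nonresonance is automatic because the normal eigenvalue is double.

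The citation buys brevity and generality. Your route verifies that nothing breaks in passing to the regularized isosceles system. There the extra time factor $\cos^2u$ makes the normal eigenvalue $\a(u)=-\sqrt{2h}\cos^2u$ vary along $\cE_+$, which is harmless because it is still double at each point. Your remark that one must not run Poincar\'e--Dulac in all three variables, because of the resonances $\a=\a+k\cdot 0$, is exactly the right point.

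Your normal form also clarifies what the statement can mean. Since $\a(u)$ varies, the flow near $p$ cannot be conjugate to its literal linear part at $p$. The parametrized form $u'=0$, $\xi'=A(u)\xi$ is therefore the correct reading of ``analytically linearized.''

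One soft spot: Fenichel theory only gives $C^r$ fibers, so the analytic dependence of $W^s(p)$ on $p$ needs an analytic argument. The cleanest is the parametrization method. Writing $F$ for the vector field in $(\xi,u)$, solve $F(K(\eta,c))=D_\eta K(\eta,c)\,A(c)\eta$ with $K(0,c)=(0,c)$ and $D_\eta K(0,c)$ spanning the stable eigenspace. The degree-$n$ homological operator has eigenvalues $(n-1)\a$ and $n\a$, of size at least a multiple of $n$, so the series converges. Then $K$ is an analytic local diffeomorphism conjugating $c'=0$, $\eta'=A(c)\eta$ to the flow, which gives your Steps 1 and 2 at once.
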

A more general result for the $n$-body problem was proved in  \cite{DMMY} so no proof will be given here.   Restricting attention to the subset of the energy manifold with $0\le s\le 1$ we find the that local stable and unstable manifolds are again open subsets but now the manifolds of the individual restpoints are two-dimensional half-disks (the parts of the previous disks with $s\le 1$).  The boundaries of these half-disks are in the infinity manifold $s=1$ and consist of the one-dimensional stable or unstable manifold of the restriction of the flow to the infinity manifold.  This is shown schematically in Figure~\ref{fig_hyperbolicinfinity}.   The plane in the figure represents the infinity manifold $s=1$ and the region above it is $\{s<1\}$.  Note that the eigenvector $e_2$ for the attracting eigenvalue  is tangent to the infinity manifold.  It follows that each restpoint has a one-dimensional stable or unstable curve in the infinity manifold tangent to the eigenspace.  As $u_0\into \pm\fr\pi{2}$, the nonzero eigenvalues tend to zero and the eigenvectors become tangent to the line of restpoints.

\begin{figure}[h]
\scalebox{0.5}{\includegraphics{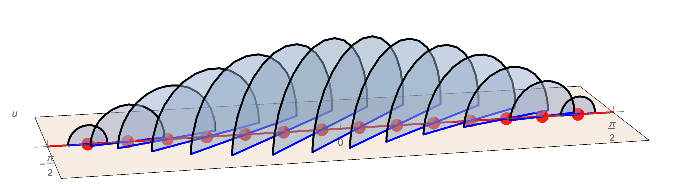}}
\caption{Schematic picture of a neighborhood of hyperbolic infinity in the three-dimensional energy manifold.  The line of restpoints represents $\cE_+$ or $\cE_-$ and the neighborhood is the local stable manifold or unstable manifold, foliated into the manifolds associated to individual restpoints.}
\label{fig_hyperbolicinfinity}
\end{figure}

Because of the analytic linearization,  there is a real analytic projection map from  $W^s_{loc}(\cE_+)$ to $\cE_+$ assigning to each point its limiting equilibrium point.  Recall from Corollary~\ref{cor_shapeconvergence} that for any solution $\g(\tau)$ which does not have a forward-time triple collision, the shape $u(\tau)$ converges to a limit $u_+(\g)$ which depends continuously on initial conditions.   Now we see  that, at least for solutions with $u_+(\g)\ne \pm\fr{\pi}{2}$ the final shape map is actually analytic.

In these terms, the hyperbolic scattering problem starting  at a given $u_-$ amounts to following the unstable manifold of the restpoint $(1,-\sqrt{2h},u_-,0)\in \cE_-$ to a neighborhood of $\cE_+$ and determining which angles $u_+$ occur as final shapes.  Imagine choosing an arc in  $W^u(p)$ transverse to the  flow (bold semicircles in Figure~\ref{fig_hyperbolicinfinity}).   The endpoints of  the arc are in $\cM_\infty$, that is, in the one-dimensional unstable manifolds of $p$ within $\cM_\infty$.  The rest of the arc is in $\{s<1\}$.  In the rest of this section, we will determine what happens to these endpoints and show that the parts of the arc close to the endpoints behaves in a similar way.

Recall that the infinity ``manifold''  $\cM_\infty$ is actually a variety with singularities (see Figure~\ref{fig_infinitymanifold}). Consider the part with $-\sqrt{2h}\le v\le \sqrt{2h}$.  
The energy equation with $s=1$ represents the surface locally as union  of two smooth sheets
\begin{equation}\label{eq_sheets}
w=\pm \cos u \sqrt{2h-v^2}.
\end{equation}
The differential equation (\ref{eq_infinityode}) shows that the function $v$ is a strictly increasing except at the equilibria.  On the half of the surface with $w>0$ the coordinate $u$ increases; on the other half it decreases.  The vertical line segments at $u=\pm\fr{\pi}{2}\bmod \pi$ are also composed of equilibria.
These observations show that there must be many restpoint connections in the surface. 

Imagine starting at a restpoint $p$ on the horizontal segment of $\cE_-$ with $u\in (-\fr{\pi}{2}, \fr{\pi}{2})$ and following the branch $\g$ of the one-dimensional unstable manifold with $w>0$.  $u(\tau)$ will increase while $v(\tau)$ increases and the solution will approach one of the restpoints at $u=\p2$.   We will see below that each such restpoint   has  one-dimensional stable and unstable manifolds in $\cM_\infty$.   These manifolds give a foliation of the part of  $\cM_\infty$ with $-\sqrt{2h}<v<\sqrt{2h}$.  Our solution $\g$ will be in one of these stable curves, say $W^s(q)$ where $q=(1,v,\p2,0)$,  $|v|<\sqrt{2h}$.   Although $\g$ can't pass through the restpoint, we will see below that nearby solutions with $s<1$ will pass through to emerge near $W^u(q)$ with $u>\p2$.   This unstable branch will continue along  to another restpoint and so on.  Define a {\em restpoint chain} in $\cM_\infty$ to be a sequence of such heteroclinic orbits.  Eventually, the chain will terminate at a restpoint in $\cE_+$.

Because the flow on $\cM_\infty$ is independent of the potential, we can have a complete understanding of such restpoint chains.  The behavior of the shape variable $\t$ is particularly simple.
\begin{proposition}\label{prop_totalvariation}  
Consider a chain of restpoints beginning at a restpoint $p_- = (1,-\sqrt{2h},u_0,0)\in\cE_-$ and ending at $p_+=(1,\sqrt{2h},u_1,0)\in\cE_+$.   Then the total variation of $\t(\tau)=\t(u(\tau))$ along  the chain is exactly $\Delta \t=\pi$ and $u_1=u_0\pm\pi$.  There is just one binary collision and $\t_1= -\t_0$, where $\t_i=\t(u_i)$.
\end{proposition}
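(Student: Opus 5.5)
The plan is to find an explicit first integral of the flow (\ref{eq_infinityode}) on the smooth part of $\cM_\infty$ that involves the shape angle $\t$ and the velocity $v$ directly. Once $\t$ is expressed as a function of $v$ along each heteroclinic piece, the total variation of $\t$ is just bookkeeping.

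\emph{Step 1: a first integral.} On either sheet (\ref{eq_sheets}) with $\cos u\ne 0$ and $|v|<\sqrt{2h}$ we have $v'=w^2>0$, so $v$ can serve as the parameter along orbits. Then
$$\fr{du}{dv}=\fr{u'}{v'}=\fr{\sqrt{1+\cos^2u}}{2w}.$$
Combining this with $\t_u=2\cos u/\sqrt{1+\cos^2u}$ from (\ref{eq_thetau}) and $w=\pm\cos u\sqrt{2h-v^2}$ gives
$$\fr{d\t}{dv}=\pm\fr{1}{\sqrt{2h-v^2}},$$
where the sign is that of $w/\cos u$, i.e.\ which sheet the orbit lies on. Hence along each heteroclinic piece
$$\t=\pm\arcsin\left(v/\sqrt{2h}\right)+\text{const},$$
and $\t$ is monotone with $|\Delta\t|=\Delta\arcsin(v/\sqrt{2h})$. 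Since $v$ increases monotonically along the whole chain from $-\sqrt{2h}$ to $\sqrt{2h}$, the total variation of $\t$ is at most $\pi$, and it equals $\pi$ exactly when $v$ actually reaches $\sqrt{2h}$ at the end.

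\emph{Step 2: first leg, from $p_-$ to a binary-collision restpoint.} Take the branch of $W^u(p_-)$ with $w>0$ (the branch with $w<0$ is handled by the symmetry $u\mapsto -u$). Along it $\t$ increases from $\t_0$, with $\t=\t_0+\pi/2+\arcsin(v/\sqrt{2h})$. This leg cannot reach $\cE_+$ with $u<\p2$: that would force $\t$ to increase by $\pi$, which is impossible since $\t\le\p2$. So it reaches $u=\p2$, i.e.\ $\t=\p2$, at the restpoint $q=(1,v_c,\p2,0)$ with $\arcsin(v_c/\sqrt{2h})=-\t_0$. In particular $|v_c|<\sqrt{2h}$, so $q$ is an interior restpoint on the goal post; this is the one binary collision. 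The variation used on this leg is $\p2-\t_0$.

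\emph{Step 3: second leg, beyond the collision.} The chain continues along the branch of $W^u(q)$ with $u>\p2$. There $\cos u<0$, and because $u$ keeps increasing we have $w>0$; the two sheets have swapped roles. Consequently $\t$ now decreases from $\p2$, still at rate $|d\t/dv|=1/\sqrt{2h-v^2}$.
\begin{itemize}
\item A second binary collision would require reaching $u=3\pi/2$, i.e.\ $\t=-\p2$. That needs a further variation of $\pi$.
\item Only $\pi/2+\t_0<\pi$ of arcsine variation remains as $v$ runs from $v_c$ to $\sqrt{2h}$.
\end{itemize}
So the leg ends in $\cE_+$ at $v=\sqrt{2h}$ with $\t_1=\p2-(\pi/2+\t_0)=-\t_0$ and $u_1\in(\p2,3\p2)$. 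Since $\t(u+\pi)=-\t(u)$, this gives $u_1=u_0+\pi$. The total variation is $(\p2-\t_0)+(\p2+\t_0)=\pi$.

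\emph{Main obstacle.} The delicate point is the singular restpoint $q$ on the goal post. I must justify that the ``continuation'' through it is the branch with $u>\p2$ and with the sign conventions claimed, and that $q$ really has one-dimensional stable and unstable curves on each side. My first attempt would be to use the explicit integral: in the $(u,v)$-plane near $u=\p2$ the level curves of $\t\mp\arcsin(v/\sqrt{2h})$ form exactly the foliation described before the proposition. The incoming curve $\t=\p2-\arcsin(v_c/\sqrt{2h})+\arcsin(v/\sqrt{2h})$ then matches the outgoing curve $\t=\p2+\arcsin(v_c/\sqrt{2h})-\arcsin(v/\sqrt{2h})$ through the point $(\p2,v_c)$. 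The remaining ingredient is the passage of nearby orbits with $s<1$ through $q$, which the text says will be shown below; here I would simply take the chain as defined.
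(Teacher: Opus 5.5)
Your proposal is correct and takes essentially the same approach as the paper. You parametrize each heteroclinic piece by the strictly increasing variable $v$ and use $|d\theta/dv| = 1/\sqrt{2h-v^2}$ to get total variation $\pi$. You then track $\theta$ through the single binary collision ($\frac{\pi}{2}-\theta_0$ up, then $\frac{\pi}{2}+\theta_0$ down) to obtain $\theta_1=-\theta_0$ and $u_1=u_0\pm\pi$.

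Your version is more careful than the paper's sketch in two places. You correctly write $d\theta/dv=\cos u/w$, whereas the paper's display omits the factor $\cos u$. You also justify the absence of a second binary collision through the remaining budget $\frac{\pi}{2}+\theta_0<\pi$, which the paper only asserts.
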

\begin{proof}
Since $v(\tau)$ is strictly increasing along solutions with $|v|<\sqrt{2h}$ we can parametrize the solutions by $v$.  From (\ref{eq_thetau}) we find that  $\t(v)$ satisfies
$$\fr{d\t}{dv} = \fr{\t_u(u) u'(\tau)}{v'(\tau)} =  \fr{1}{w}.$$
By the energy equation, $w^2=(2h-v^2)\cos \t$ so we get 
$$\left| \fr{d\t}{dv}\right | = \fr{1}{\sqrt{2h-v^2}}\qquad \Delta\t = \int_{-\sqrt{2h}}^{\sqrt{2h}}\fr{dv}{\sqrt{2h-v^2}}  = \pi.$$

Starting at $u_0 \in (-\p2,\p2)$, a chain with $w>0$ will reach $u=\p2$.  Meanwhile, $\t$ has increased from $\t(u_0)$ to $\p2$.  Next $u$ enters $[\p2,\fr{3\pi}{2}]$ and $\t(u)$ begins to decrease.  It must decrease by an additional $\p2+\t_0$ without further binary collision until reaching $-\t_0$.  Since $u=u_0+\pi$ is the first parameter such that  $\t(u) = -\t_0$ we have $u_1=u_0+\pi$.
Similarly, chains beginning with $w<0$ have $\t_0$ decreasing to a binary collision with $\t=-\p2$ and then increasing to $-\t_0$ while $u$ decreases monotonically to $u_0-\pi$.
\end{proof}

Figure~\ref{fig_chains} shows several of these restpoint chains in $\cM_\infty$ mapped to the $(\t,v)$-plane.  We can summarize by saying that the shapes represented by the angles $\t$ and $-\t$ are related by {\em scattering at infinity}.   The paper \cite{DMMY} studied the problem of hyperbolic scattering at infinity for the general $n$-body problem under the assumption that there were no collisions.  In that case, a shape is represented by a normalized configuration of $n$-bodies $q=(q_1,\ldots, q_n)$.  The relevant result there is that the configuration $q$ is related by  scattering  at  infinity to the configuration $-q$.  While superficially similar  to our result  for the isosceles  problem,  this is actually quite different.  For a planar configuration, $q$ and $-q$ are just rotations of one another by $180^\circ$.   But the isosceles configurations with parameters $\t$ and  $-\t$ are reflections of one another through the $x$-axis.  For example the two rotationally inequivalent  equilateral triangles are represented by $\t_{lag}$ and $-\t_{lag}$.   Thus, even though the effect of the potential disappears at infinity, there is still a residual effect due to binary collision.    In his thesis, my student Chen Shi studied the effects of binary collision on the flow at and near infinity in the collinear three-body problem with two equal masses \cite{Shi}.   In that case, although the flow at infinity is still quite simple, it's possible to have restpoint chains with several binary collisions.

\begin{figure}
\scalebox{0.5}{\includegraphics{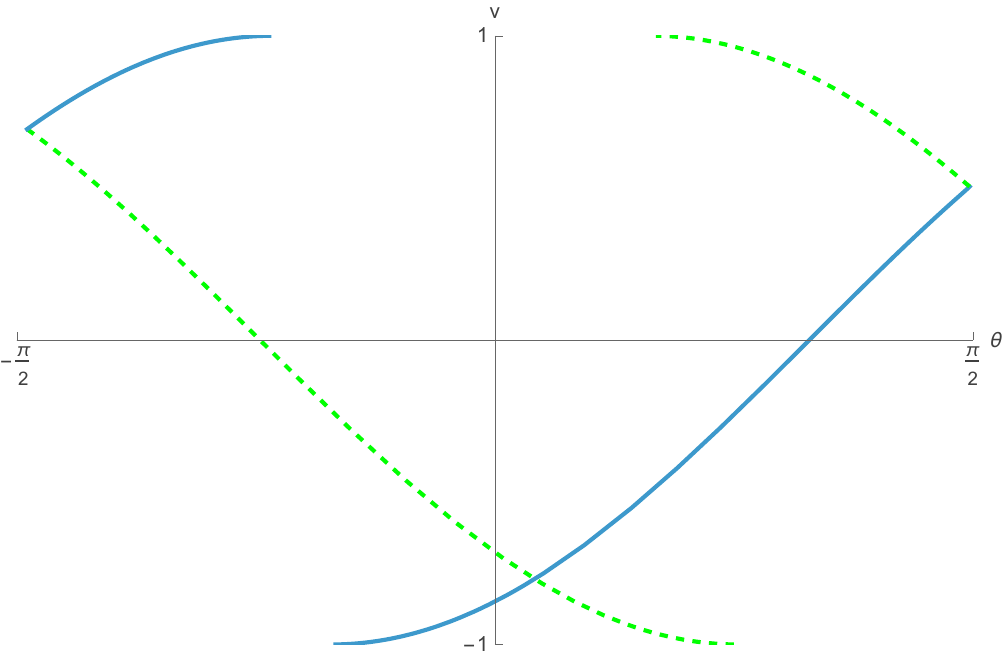}}
\caption{Chains of heteroclinic orbits on the infinity manifold for $h=\fr12$.   The chains connect restpoints 
in $\cE_-$ with a given shape $\t_0$ to restpoints in $\cE_+$ with shape $-\t_0$.}
\label{fig_chains}
\end{figure}

Having understood scattering at infinity,  we want to show that orbits close to infinity behave in a similar way.   The regularized flow is continuous with respect to initial conditions but there is a technical difficulty due  to the fact that the scattering at infinity is represented by a chain of orbits instead  of a single orbit.   For a given restpoint 
$p_- = (1,-\sqrt{2h},\t_0,0)\in\cE_-$ consider a semi-circular arc in $W^u(p_-)$ transverse to the flow as described above.  We will see that the part of the arc close to an endpoint will shadow the corresponding chain of restpoints in the collision manifold and end at a shape $\t_+$ close to $-\t_-$.  

\begin{proposition}\label{prop_shadowing}
Let $p_- = (1,-\sqrt{2h},u_0,0)\in\cE_-$ and let $\a(z)$, $z\in [0,1)$, be a continuous curve with $\a(0)\in W^u(p_-)\cap \cM_\infty$ and $\a(z)\in \{s<1\}$ for $z>0$.   Then  for $\e>0$ sufficiently small, the asymptotic shapes $u_+(\a(z))$ and $\t_+(\a(z))$, $z\in [0,\e)$ are well-defined and continuous.
\end{proposition}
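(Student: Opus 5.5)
The plan is to follow the restpoint chain of Proposition~\ref{prop_totalvariation} piece by piece and use continuity of the regularized flow, together with the local structure near each restpoint that the chain visits. First, $\a(0)$ lies on one branch of $W^u(p_-)\cap\cM_\infty$, say the branch with $w>0$. Along this branch $v$ increases and $u$ increases toward $\p2$. The orbit converges to a restpoint $q=(1,v_q,\p2,0)$ with $|v_q|<\sqrt{2h}$, or possibly to a point of the goal posts. Since $\a(z)$ has $s<1$ for $z>0$, Proposition~\ref{prop_triplecollision} shows that none of these orbits can reach triple collision. The reason is that $v$ starts near $-\sqrt{2h}$ with $s$ near $1$, and the Lyapunov function $\phi=v/\sqrt{1-s}$ then starts near $-\infty$. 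I will therefore need to rule out triple collision directly. I expect to do this by showing that the orbits stay in a small neighborhood of $\cM_\infty$, where $s$ is close to $1$ and never approaches $0$.

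The key local step is a passage lemma at the binary-collision-at-infinity restpoints $q$. Along the vertical line $u=\p2$, $w=0$ in $\cM_\infty$, I linearize (\ref{eq_sode}). At $u=\p2$ the energy equation forces $w^2=(1-s)W(u)$ when $s<1$. Hence orbits with $s<1$ close to $q$ have $|w|$ of order $\sqrt{1-s}$, nonzero, and they cross $u=\p2$ with $u'=\frac12 w$, with $w$ keeping its sign. I would use $\sqrt{1-s}$ together with the rescaled variable $w/\sqrt{1-s}$ as local coordinates and compare with the flow on $\cM_\infty$, where $w'\approx\fr12(v^2-2h)\sin u\cos u\sqrt{1+\cos^2u}$. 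On the two sheets (\ref{eq_sheets}) the variable $w$ changes sign relative to $\cos u$. This shows that orbits entering near $W^s(q)$ exit near $W^u(q)$, on the branch with $u>\p2$ and $v$ still increasing, and that the exit point depends continuously on the entry point with $v$ almost unchanged.

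This passage lemma is the main obstacle. Near $q$ the linearization is degenerate, since $\cos u=0$ kills the $s'$ and the $v^2\cos^2 u$ terms. I would first try a blow-up in $(\cos u,\sqrt{1-s})$, or a direct estimate of $d\t/dv=1/w$, exactly as in Proposition~\ref{prop_totalvariation}. The plan is to show that the transit time is bounded and that $v$ changes by $o(1)$ as $s\to1$.

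Once the passage lemma is in place, I concatenate. Outside small neighborhoods of the restpoints the flow is continuous, and the orbit $\a(z)$ follows the second leg of the chain to a neighborhood of $(1,\sqrt{2h},u_0+\pi,0)\in\cE_+$. Since $u_0+\pi\ne\pm\p2$ modulo $\pi$, this point has an open local stable neighborhood $W^s_{loc}(\cE_+)$ by Proposition~\ref{prop_analyticlinearization}. For $z$ small, $\a(z)$ eventually enters this neighborhood, and the analytic projection to $\cE_+$ gives a well-defined final shape $u_+(\a(z))$ that is continuous in $z$, with $\t_+=\t(u_+)$ near $-\t_0$. Entry into $W^s_{loc}(\cE_+)$ also shows that the orbit is forward hyperbolic and has no triple collision, so Corollary~\ref{cor_shapeconvergence} applies as well. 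Continuity at $z=0$ is obtained by defining $u_+(\a(0))=u_0+\pi$, the end of the chain.
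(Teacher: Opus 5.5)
You have the paper's architecture: follow the chain of Proposition~\ref{prop_totalvariation}, use continuity of the flow away from restpoints, push the arc past the binary-collision restpoint $q=(1,v_q,\frac{\pi}{2},0)$, and finish in the open set $W^s_{loc}(\mathcal{E}_+)$ of Proposition~\ref{prop_analyticlinearization}. The gap is the passage lemma. You rightly single it out (it is Proposition~\ref{prop_exchange} in the paper and carries the whole proof), but you leave it unproved, and the route you sketch rests on a misreading of the local dynamics.

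On the energy manifold the tangent space at $q$ is $\{\delta s=0\}$, so $s$ is a function of $(v,u,w)$ there. The linearization is therefore not degenerate: its eigenvalues are $0$ and $\pm\frac12\sqrt{2h-v_q^2}$, and the zero comes only from the segment of restpoints. So $q$ lies on a normally hyperbolic line of saddles, whose stable and unstable manifolds are the two sheets (\ref{eq_sheets}). The energy equation can be written $2(1-s)\bigl(W(u)-h\cos^2u\bigr)=(w-\kappa\cos u)(w+\kappa\cos u)$ with $\kappa=\sqrt{2h-v^2}$. Hence $\{s<1\}$ is the sector between the sheets, and $1-s$ is comparable to the product of the distances to them. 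Since $s'=vs(1-s)\cos^2u$ is tiny near $q$, this product is nearly conserved. An orbit entering at distance $\sim 1-s$ from $W^s(q)$ therefore spends a time of order $\log\frac{1}{1-s}$ near $q$. The transit time in $\tau$ is unbounded as the arc approaches $\mathcal{M}_\infty$, so your goal of proving it bounded cannot succeed. Reparametrizing by $v$ or blowing up in $(\cos u,\sqrt{1-s})$ only moves the difficulty into estimating how much the centre variable changes over the passage.

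What is missing is an estimate showing that, despite this arbitrarily long passage, the drift along the line of restpoints tends to zero. Only then does the exit point converge to $q_1\in W^u(q)$ for the same $q$, rather than landing on the unstable fibre of a neighbouring restpoint. The paper gets this by straightening coordinates:
\begin{itemize}
\item It uses coordinates $(x,y,z)$ in which the sheets are $\{y=0\}$ and $\{x=0\}$ and the stable/unstable fibres are the lines $z=\mathrm{const}$, so that $\dot z=O(xy)$.
\item Then $x\le\delta e^{-\mu_1t}$, $y_0e^{\lambda_1t}\le y\le y_0e^{\lambda_2t}$ and $T\le\lambda_1^{-1}\ln(\delta/y_0)$.
\item The drift is at most $C\delta y_0\int_0^Te^{(\lambda_2-\mu_1)t}\,dt$, which tends to $0$ because the constants are chosen with $\lambda_2-\lambda_1<\mu_1$.
\end{itemize}
Alternatively, one can cite the Hartman--Grobman theorem for normally hyperbolic manifolds. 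Tracking $v$ instead could also work, since $v'=w^2+O(1-s)$ vanishes quadratically in the normal variables. But it needs the same exponential estimates and a shrinking-neighbourhood argument, not a bound on the transit time. Without such an argument, continuity of $u_+\circ\alpha$ at the endpoint does not follow.

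A smaller point: your first paragraph says Proposition~\ref{prop_triplecollision} excludes triple collision. That is false; Corollary~\ref{cor_infinitytoLplus} gives orbits from $\mathcal{E}_-$ to $L^*_\pm$. For arcs close to $\mathcal{M}_\infty$, triple collision is excluded only by the shadowing itself, which is what your last paragraph correctly uses.
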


\begin{figure}[h]
\scalebox{0.5}{\includegraphics{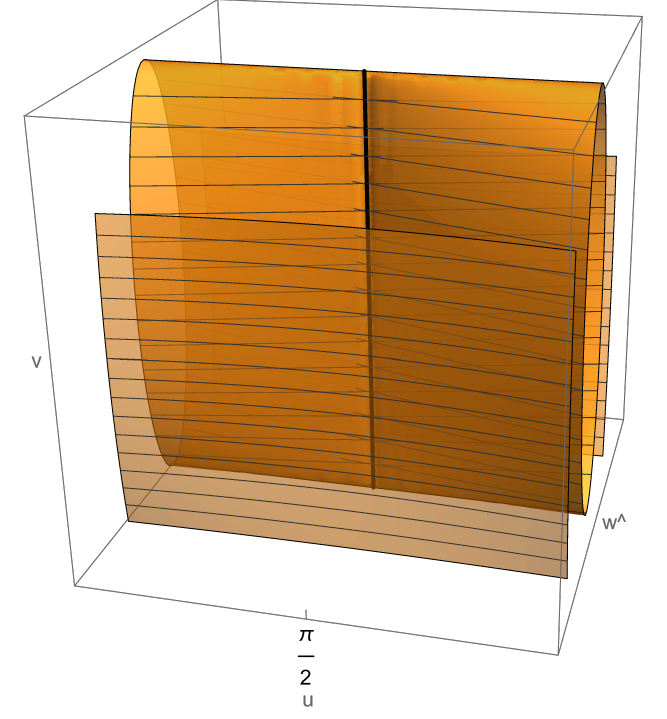}}
\caption{Neighborhood of binary collision in the infinity manifold ($s$=1) and nearby level $s<1$ in $(u,w,v)$ coordinates.  The binary collision at infinity is a line of restpoints but the binary collisions with $s<1$ pass through. }
\label{fig_collisiondetail}
\end{figure}

The proof  involves studying how the arc passes by the restpoints at $\pm\p2$.
Consider a restpoint $q=(s,v,u,w)=(1,v,\fr{\pi}{2},0)$, $-\sqrt{2h}<v<\sqrt{2h}$..
The matrix of the linearization of (\ref{eq_sode}) at $q$ is
\begin{equation}\label{eq_A}
A=\m{0&0&0&0\\
-\fr{1}{\sqrt{2}}&0&0&0&\\
0&0&0&\fr12\\
0&0&\fr12\sqrt{2h-v^2}&0
}.
\end{equation}
The tangent space to the energy manifold at $p$ is $\{\delta s=0\}$.
The eigenvalues of the restriction of $A$ are $0,-\b/2,\b/2$ where $\b=-\sqrt{2h-v^2}$.
In the basis 
$$e_1=(0,1,0,0)\quad  e_2=(0,0,2,-\b)\quad  e_3=(0,0,2,\b)$$
 the matrix  of the restriction of $A$ is
$$\m{0&0&0\\
0&-\b&0\\
0&0&\b
}.
$$
The line segment $q=(1,v,\fr{\pi}{2},0)$,  $-\p2  <v <\p2$  is  a normally  hyperbolic  manifold of equilibrium points.   Each  restpoint has one-dimensional  stable and unstable manifolds both lying in $\cM_\infty$.    They fit together to  give the two-dimensional stable and unstable manifolds of the line segments which is nothing other than the part of the infinity manifold with $-\p2 <v<\p2$, shown in Figure~\ref{fig_collisiondetail}.  The following lemma is the key to the proof of Proposition~\ref{prop_shadowing}.

\begin{proposition}\label{prop_exchange}
Let  $q=(s,v,\t,w)=(1,v_0,\fr{\pi}{2},0)$,  $-\p2  <v_0 <\p2$ be one of the binary collision restpoints and let  $\a(\z)$, $\z\in [0,1)$ be a continuous curve with $\a(0)\in  W^s(q)$ and 
$\a(\z)\in \{s<1\}$ for $\z>0$.   Let  $\Sigma$ be a two-dimensional cross-section to the flow in the energy manifold at a point $q_1\in W^u(q)$.  Then for $\e>0$ sufficiently small, the curve $\a(\z)$, $\z\in (0,\e)$, flows past  $q$ to emerge as a continuous curve $\a_1(\z)$ in $\Sigma$ with $\a(\z)\into q_1$  as $z\into 0$.
\end{proposition}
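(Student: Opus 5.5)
The plan is to treat $q$ as one point of a normally hyperbolic curve of equilibria and to carry out a Shilnikov-type passage past it, in local coordinates where that passage is explicit.

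\emph{Step 1: local coordinates on the energy manifold.} At $q=(1,v_0,\fr{\pi}{2},0)$ the gradient of the energy function in (\ref{eq_senergy}) is $(W(\fr{\pi}{2}),0,0,0)$, and $W(\fr\pi2)>0$. So near $q$ the energy manifold is a smooth graph over $(v,u,w)$. Solving (\ref{eq_senergy}) gives
$$1-s=\fr{w^2-(2h-v^2)\cos^2u}{2W(u)}+\ldots .$$
Hence, near $q$, the set $\{s=1\}$ is the union of the two sheets (\ref{eq_sheets}), and $\{s<1\}$ is the region $w^2>(2h-v^2)\cos^2u$ lying strictly outside both sheets. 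In these coordinates the segment of restpoints $\{u=\fr\pi2,\,w=0\}$ with $|v|<\sqrt{2h}$ is a curve of equilibria. By the linearization (\ref{eq_A}) it is normally hyperbolic, with normal eigenvalues $\pm\b/2$, which are nonzero because $|v_0|<\sqrt{2h}$. Its local stable and unstable manifolds are the two sheets of $\cM_\infty$, and the individual $W^{s}(q)$, $W^u(q)$ are the one-dimensional fibres.

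\emph{Step 2: normal form.} I would straighten the invariant objects by a $C^1$ change of coordinates $(v,x,y)$ near the curve. In these coordinates the curve is $\{x=y=0\}$, the stable sheet is $\{y=0\}$, the unstable sheet is $\{x=0\}$, and the fibres are $\{v=\text{const}\}$ on each sheet; this uses Fenichel's theorem for the invariant foliations. The equations then take the form
$$x'=-\l(v)x(1+O(|x|+|y|)),\qquad y'=\l(v)y(1+O(|x|+|y|)),\qquad v'=O(|xy|),$$
with $\l>0$. The last equation holds because $v'$ vanishes on both invariant sheets, each of which is a union of fibres. The region $\{s<1\}$ corresponds to $xy\ne 0$, and in particular $\a(\z)$ has $y\ne0$ for $\z>0$.

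\emph{Step 3: passage estimate.} I would fix entry and exit sections $\Sigma_{in}=\{|x|=\d\}$ and $\Sigma_{out}=\{|y|=\d\}$. Since $\a(0)\in W^s(q)$, flowing $\a(\z)$ forward a bounded time brings it to $\Sigma_{in}$, by continuity of the flow (Proposition~\ref{prop_existence}). It arrives at points with $y=\eta(\z)\ne0$, with $\eta(\z)\into0$ and $v\into v_0$ as $\z\into0$. The passage time is $T\approx\l^{-1}\log(\d/|\eta|)$. During the passage the product $|xy|$ stays of order $\d|\eta|$, so the change in $v$ is $O(\d|\eta|\log(1/|\eta|))\into 0$. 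The exit point therefore satisfies $x=O(|\eta|)\into0$ and $v\into v_0$, and it depends continuously on $\z$, because the first hitting time of $\Sigma_{out}$ is continuous by transversality. So the exit points converge to the point of $W^u(q)\cap\Sigma_{out}$ on the branch selected by the sign of $\eta$. A further bounded-time flow from $\Sigma_{out}$ to $\Sigma$, using transversality and continuity again, gives the continuous curve $\a_1(\z)$ with $\a_1(\z)\into q_1$.

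The main obstacle is Step 2. The normal eigenvalues $\pm\l$ are resonant, so an analytic linearization is not available, and the fibres are only guaranteed to be finitely smooth. If the $C^1$ straightening proves awkward, I would instead avoid coordinates altogether. The replacement argument uses cone/box estimates in the original variables $(u,w)$, in which $u-\fr\pi2$ and $w$ obey a saddle equation with coefficients depending slowly on $v$. Separately, $|v'|$ is bounded by a multiple of the distance to $\{s=1\}$, and that quantity is bounded by $1-s$; $s$ itself is nearly constant near the restpoint line, since $s'$ carries the factor $(1-s)\cos^2u$. Together these bound the drift of $v$ during the logarithmically long passage. A related subtlety is choosing the exit branch: I would fix the sign of $w$ at entry, so that the orbit crosses $u=\fr\pi2$ with $w\ne0$ and leaves along the branch of $W^u(q)$ containing $q_1$.
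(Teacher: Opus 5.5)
Your Steps 1--3 are essentially the paper's proof. The paper also straightens the two sheets (\ref{eq_sheets}) of $\cM_\infty$ and their fibrations into coordinate planes with the restpoint line as an axis, and writes the flow as $\dot x=-\mu x+xf$, $\dot y=\l y+yg$, $\dot z=xyh$. It then bounds the passage time by a multiple of $\ln(\d/y_0)$ and the drift in $z$ by a quantity tending to $0$ with $y_0$, using rates $\l_1,\l_2,\mu_1$ with $\l_2-\l_1<\mu_1$ rather than your exact balance of $|xy|$.

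One thing to tighten: a merely $C^1$ straightening does not give $v'=O(|xy|)$. Smoother coordinates are available because the sheets and their fibres are explicit analytic curves, so no linearization, and hence no resonance issue, is needed. Discard your fallback: in the original variables $v'=w^2\neq 0$ on $\cM_\infty$ off the restpoint line, so $|v'|$ is not controlled by the distance to $\{s=1\}$.
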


One could prove this by citing the Hartman-Grobman theorem for normally hyperbolic manifolds \cite{PughShub}.  Namely, the proposition holds for the linear flow determined by (\ref{eq_A}) to which the actual flow is topologically conjugate.  However, as shown by Chen Shi in his thesis on the collinear problem \cite{Shi}, it's possible to give a simple proof independent of the machinery of normally hyperbolic manifolds.  Here is a similar proof adapted to the current situation.
\begin{proof}
Recall that locally near $q$, $\cM_\infty$ is the union of two smooth surfaces (\ref{eq_sheets})  foliated  by the stable and unstable manifolds of $q$ and the nearby restpoints.   It follows that one can introduce smooth coordinates $(x,y,z)$ in the energy manifold making these surfaces the  $(x,z)$ and $(y,z)$ coordinate planes.  The line segment of restpoints is represented in these coordinates by the $z$-axis.  We may also assume the stable and unstable manifolds of these restpoints are given by horizontal lines of constant $z$ within and that the part of the set $\{s<1\}$ containing the curve $\a$ is represented by $\{x>0, y>0\}$.  

In these coordinates, the differential equations will take the form
$$
\begin{aligned}
\dot x &= -\mu x+ xf(x,y,z)\\
\dot y &= \l y + y g(x,y,z)\\
\dot z &= xyh(x,y,z)
\end{aligned}
$$
where $\mu = \l = \b/2$ and where $f,g,h$ are smooth functions with $f(0)=g(0) = 0$.  The proof just uses the property that $\mu>0, \l >0$.  

Choose constants $\mu_1<\mu<\mu_2$ and $\l_1<\l<\l_2$ such that $\l_2-\l_1<\mu_1$.   If $\d>0$ is sufficiently small, we  will have
$$x(t)\le x_0 e^{-\mu_1 t}\qquad  y_0e^{\l_1 t} \le y(t) \le y_0e^{\l_2 t} $$
for $t\ge 0$ as long as the solution remains in $x(t)\in [0,\d], y(t)\in [0,\d], |z(t)|\le \d$.  Furthermore we will have $|h|\le k$ for some constant $k$ in this neighborhood.

We may assume that our curve $\a$ lies in a cross-section to the stable manifold of the form $x = \delta>0$ and is parametrized by continuous functions form $y_0(\z),z_0(\z)$, $\z\in [0,1)$, where $y_0(0)=z_0(0)=0$ and $y_0(\z)>0$ for $\z>0$. We will follow the part of the curve with $y>0$  through the neighborhood to the cross-section $\Sigma=\{y=\d\}$.  The initial conditions are $x_0=\d, y_0(\z)>0, z_0(\z)$.  The time to reach $\Sigma$ satisfies
$$ \fr{1}{\l_2}\ln\fr{\d}{y_0} \le T\le \fr{1}{\l_1}\ln\fr{\d}{y_0}.$$
Meanwhile $\dot z\le k x(t)y(t) \le k \d y_0 e^{(\l_2-\mu_2)t}$ so upon reaching $\Sigma$ we have a curve $(x_1(\z),z_1(\z))$ with
$$|x_1(\z)|\le \d e^{-\mu_1 T} \qquad         |z_1(\z)| \le |z_0(\z)| +k\d y_0(\z)\int_0^T e^{(\l_2-\mu_1)t}\,dt.$$

The curve is continuous for $\z>0$ and we need to show that it has a continuous extension to $\z=0$ with $x_1(0)=z_1(0)=0$.  Since $T\into\infty$ as $\z\into 0$ we have
$x_1(\z)\into 0$ as required.  We need to show  the same for $z_1(\z)$.  Since $z_0(0)=0$ it suffices to consider the term involving the integral.  If $\l_2-\mu_1\le 0$ this term is bounded above by $\fr{k\d}{\l_1}y_0(\z)\ln\fr{\d}{y_0(\z)}\into 0$ as $\z\into 0$.  If $\l_2-\mu_1 >0$ the integral term is 
$$\fr{k\d y_0(\z)}{\l_2-\mu_1}(e^{(\l_2-\mu_1)T}-1)\le \fr{k\d y_0(\z)}{\l_2-\mu_1}\left(\fr{\d}{y_0(\z)}\right)^\fr{\l_2-\mu_1}{\l_1}.$$
This is a constant time $y_0(\z)$ to the power $1-\fr{\l_2-\mu_1}{\l_1}$.  This power is positive by the choice of the constants, so the integral term tends to zero as $\z\into 0$ as required.
\end{proof}

\section{Triple collision to Infinity}\label{sec_collisiontoinfinity}
In this section we prove the existence of connections from the Lagrange equilateral  restpoints on the triple collision manifold to the hyperbolic restpoints at infinity.  It suffices to find forward-time connections between the Lagrange restpoints  with $v>0$ and $\cE_+$.  Then connections from $\cE_-$ to the Lagrange restpoints with $v<0$ follow by  time reversal symmetry.

There are some well-known connections coming from the Lagrange homothetic orbits.   For example there is a solution with constant shape $u=l_+$ and $s$ increasing from $0$ to $1$.  To see this set $u=l_+$ and $w=0$ in equations (\ref{eq_sodesimple}).  The energy equation reduces to $s=\fr{v_+^2-v^2}{v_+^2-1}$ where $v_+=\sqrt{2V(l_+)}$  and the differential equation for $v(\tau)$ becomes 
$$v' = \fr{\cos^2l_+}{2(v_+^2-1)}(v^2-v_+^2)(v^2-1).$$
There is a unique solution $v(\tau)$ to this scalar differential equation connecting the equilibrium points $v=v_+$ and $v=1$ for $-\infty<\tau<\infty$ and the corresponding  $s$ runs from $0$ to $1$.  This gives a homothetic connection from the restpoint $L_+ = (0,v_+,l_+,0)$ in $\cM_0$ to the restpoint $(1,1,l_+,0)\in \cM_\infty$.  Similar homothetic orbits exist connecting $L_-,E \in \cM_0$ to the corresponding point in $\cM_\infty$.

But there  many other connections from $L_+$ to  restpoints $(1,1,u,0)\in \cE_+$.
\begin{proposition}\label{prop_Lplustoinfinity}
Let $p = (1,\sqrt{2h},u,0)\in \cE_+\subset  \cM_\infty$ be any restpoint at infinity with $0< u \le \fr{\pi}{2}$.  Then there is at least one heteroclinic orbit from $L_+$ to $p$.  Similarly, there is at least one heteroclinic orbit from $L_-$ to any restpoint $p$ with $-\fr{\pi}{2}\le u < 0$.  If $m_3<\fr{55}{4}$ there are also infinitely many heteroclinic orbits from each of $L_\pm$ to the Euler restpoint with $u=0$.
\end{proposition}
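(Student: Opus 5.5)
Since $\dim W^u(L_+)=1$ in $\cM_0$ but $\dim W^u(L_+)=2$ in the energy manifold, the plan is to study the two-dimensional unstable manifold $W^u(L_+)$ and follow a one-parameter family of its orbits from orbits lying in $\cM_0$ to orbits near the homothetic orbit. Concretely, I choose a small arc $\sigma(z)$, $z\in[0,1]$, in $W^u(L_+)$ transverse to the flow. At $z=0$ it lies on the branch of $W^u_0(L_+)$ inside $\cM_0$ that begins with $w<0$ or $w>0$ (chosen below), and at $z=1$ it lies on the homothetic orbit connecting $L_+$ to $(1,\sqrt{2h},l_+,0)$. For $z>0$ we have $s>0$ and $v>0$. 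By Proposition~\ref{prop_triplecollision} these orbits never reach a triple collision in forward time, since $v>0$ stays positive. Proposition~\ref{prop_asymptoticbehavior} and Corollary~\ref{cor_shapeconvergence} then give a final shape $u_+(\sigma(z))$ that depends continuously on $z\in(0,1]$ and satisfies $u_+(\sigma(1))=l_+$. The proof is an intermediate value argument, so everything reduces to computing the limit of $u_+(\sigma(z))$ as $z\to 0$.

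For the behavior as $z\to0$ I use Proposition~\ref{prop_collisionwus}. The useful branch of $W^u_0(L_+)$ is the one that spirals up the arm $\t=\fr{\pi}{2}$, where $v\to\infty$ and $\cos u\to 0$. By Simo's result this branch exists for $m_3>\e_1$ (it is the $w<0$ branch). For $m_3<\e_1$ the $w<0$ branch ends at $E$, and then I get the arm $\t=\fr\pi2$ instead through the reflection/time-reversal symmetries stated after the proposition, or I use the other branch and work on the interval $(l_+,\fr\pi2]$. Orbits with small $z>0$ follow this branch up the arm. I want to show that they eventually have $\cos u$ small, $s$ bounded away from $0$, and $v$ large, and that from there they cannot return to a hyperbolic shape away from $\t=\fr\pi2$. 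The tool is the Lyapunov function $\phi=v/\sqrt{1-s}$, which increases. Once $v$ exceeds any bound $\sqrt{2(h-h_{12})}$ that could belong to a hyperbolic limit, namely $\sqrt{2h}$, the limiting restpoint must lie on the goal post $u\equiv\fr\pi2$. This gives $\t_+(\sigma(z))=\fr\pi2$ for all small $z>0$. By continuity of $\t_+$ on $(0,1]$, every $\t\in[l_+,\fr\pi2]$, or in the shape variable every $u\in[l_+,\fr\pi2]$, then occurs as a final shape along the arc.

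The values $0<u<l_+$ come from the other side of $L_+$. The branch of $W^u_0(L_+)$ that goes toward $E$ (this happens for $m_3<\e_1$, or by the symmetric argument in the other cases) makes the final shape tend toward the Euler shape $u=0$, or at least puts it below $l_+$. Combining the two arcs, one on each side of the homothetic orbit, covers $(0,\fr\pi2]$. The statement for $L_-$ follows from the reflection $u\mapsto -u$.

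The Euler claim for $m_3<\fr{55}{4}$ rests on the non-real eigenvalues at $E$, which is a spiral sink in $\cM_0$. I would take an arc in $W^u(L_+)$ joining an orbit close to a branch that ends at $E$ to the homothetic orbit. For orbits ending at $u=0$, the turning of the shape $u$ around $E$ before the orbit departs toward infinity along the one-dimensional $W^u(E)$ (the Euler homothetic orbit) is unbounded. The final shape therefore oscillates across $u=0$ infinitely many times as $z\to0$, and each crossing gives a distinct heteroclinic orbit to the Euler restpoint. The main obstacle is making both the arm estimate ($v$ large forces the goal-post limit, uniformly in small $z$) and this spiraling estimate near $E$ rigorous. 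I would first try a Shilnikov-type local passage near $E$, using the linearization, together with continuity of the final-shape map from Proposition~\ref{prop_asymptoticbehavior}.
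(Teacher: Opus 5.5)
Your framework is the paper's: an arc in the two-dimensional $W^u(L_+)$, a final-shape map that is continuous because $\{v>0\}$ is positively invariant, and the intermediate value theorem between an end near the arm and an end near $E$. But the step everything rests on fails as you propose it.

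You claim that once $v$ exceeds $\sqrt{2h}$ the limit must lie on the goal post, citing the monotonicity of $\phi=v/\sqrt{1-s}$. That monotonicity only gives $v(\tau)\ge\phi(\tau_0)\sqrt{1-s(\tau)}$, which tends to $0$ as $s\to 1$, so it says nothing about $\lim v$. Moreover $v$ itself is not monotone: (\ref{eq_sode}) and (\ref{eq_senergy}) give $v'=\frac12 w^2+\frac{s}{2}\cos^2u\,(2h-v^2)$, which pulls $v>\sqrt{2h}$ back down whenever $w$ is small. Your own endpoint $z=1$ refutes the implication. Whenever $V(l_+)>h$ (e.g.\ $m_3=1$, $h=\frac12$, where $V(l_+)=3$), $v$ decreases along the Lagrange homothetic orbit from $\sqrt{2V(l_+)}>\sqrt{2h}$ to $\sqrt{2h}$, yet the limit is the hyperbolic restpoint with shape $l_+$.

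What is missing is a quantitative escape argument, which the paper supplies. Orbits near the arm branch leave $\{s\le s_0\}$ with $v\ge K$, $K$ large, so the energy forces $\cos^2u$ to be small. While $\cos^2u\le\frac12$, the quantities $r$, $r_{13}$, $|x_2|$ are comparable, so $\ddot x_2\le-\alpha/x_2^2$. For $K$ large, $|\dot x_2|$ exceeds the escape velocity of this one-dimensional Kepler problem, so $|x_2|\to\infty$ monotonically with $\frac12\mu\dot x_2^2$ staying large. The Jacobi energy equation then keeps $\frac14\dot x_1^2-1/x_1$ below a negative constant, so $x_1$ stays bounded and $\cos^2u\le\frac12$ persists. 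This bootstrap, not the Lyapunov function, gives $\lambda=0$ and $u_+=\frac{\pi}{2}$ for all $z$ near the arm end.

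You have also misidentified the branches of $W^u_0(L_+)$, which in fact do not depend on $m_3$. Despite its wording, Proposition~\ref{prop_collisionwus} describes $W^u(L_+^*)$; compare the caption of Figure~\ref{fig_collisionwus} and its use in Section~\ref{sec_scatteringtheorems}. For $L_+$ no numerics are needed. Since $v$ increases on $\mathcal{M}_0$, both branches have $v>v_L=\sqrt{2V(l_+)}$. On $\mathcal{M}_0$ one has $w^2=\cos^2u\,(2V(u)-v^2)$ away from $\cos u=0$, so neither branch can return to a Lagrange shape $u\equiv\pm l_+ \bmod \pi$. Hence:
\begin{itemize}
\item the $w>0$ branch is trapped in $l_+<u<\pi-l_+$, which contains no restpoint, so $v\to\infty$ up the arm at $u=\frac{\pi}{2}$;
\item the $w<0$ branch is trapped in $l_-<u<l_+$ and converges to $E$.
\end{itemize}
So the arm branch is always the $w>0$ one, not the $w<0$ one as you have it for $m_3>\epsilon_1$. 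Your symmetry fallback for $m_3<\epsilon_1$ would not work anyway: time reversal carries $W^u(L_+)$ to $W^s(L_+^*)$, and $u\mapsto -u$ carries $L_+$ to $L_-$.

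At the $E$ end, ``at least below $l_+$'' does not suffice to cover $(0,l_+)$. You need $u_+\to 0$, which holds because nearby orbits pass the hyperbolic point $E$ and leave along $W^u(E)$, the Euler homothetic orbit of constant shape $0$. With these repairs, your two arcs split at the homothetic orbit amount to the paper's single semicircle from one branch of $W^u_0(L_+)$ to the other. Your spiralling argument for the Euler connections is the paper's.
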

\begin{proof}
Viewed in the collision manifold $\cM_0$, $L_+$ is a saddle point.  Because $v$ is a Lyapunov function for the flow on the collision manifold, one branch of the unstable manifold 
$W_0^u(L_+)$ converges to the Euler restpoint at $u=0$ while the other spirals around the arm at $u=\fr{\pi}{2}$ (see Figure~\ref{fig_collisionmanifold}). The full unstable manifold $W^u(L_+)$ in the energy manifold has dimension 2.  Consider a semi-circular cross-section to the flow in this unstable manifold parametrized as $q(z), 0\le z\le 1$ where $q(0), q(1)$ are points in the two branches of $W^u_0(L_+)$ and where $q(z)\subset \{s>0\}$ for $0<z<1$.  Now $E$ is a hyperbolic restpoint with a one-dimensional unstable manifold, consisting exactly of the Euler homothetic orbit.  The orbits of $q(z)$ for $z>0$ near $0$ will pass by $E$ and emerge near the unstable manifold.  For the homothetic orbit, $u(\tau)=0$ for all $\tau$ and the limiting shape is $u_+ = 0$.  Since the limiting shape depends continuously on initial conditions we have $u_+(q(z))\into 0$ as $z\into 0^+$.

The orbits of $q(z)$ for $z$ near $1$ will closely follow that branch of $W_0^u(L_+)$ which spirals up the arm of $\cM_0$ near $u=\fr{\pi}{2}$.  The will emerge from a neighborhood 
$\{s\le s_0\}$ with a tight binary configuration and with $v$ arbitrarily large.  From this, it follows that for $|z-1|>0$ sufficiently small,  $u_+(q(z))=\fr{\pi}{2}$.  Here is a proof sketch skipping some details.  Consider an initial condition $(s_0,v_0,u_0,w_0)$ with $s=s_0>0$ and $v_0\ge K$ for some  constant $K>0$.  By choosing $K$  large, the energy equation forces $\cos^2 u_0$ to be arbitrarily small, that is, the shape is a tight binary.  Fixing an upper bounded for $\cos^2 u$, say $\cos^2 u\le \fr12$ makes the quantities $r, r_{13}, |x_2|$ are all comparable in the sense that each is bounded above and below by some positive multiples of the others.  Using these bounds shows that the Jacobi variable $x_2$ satisfies 
$$\ddot x_2 \le -\fr{\a}{x_2^2}$$
for some constant $\a>0$.  On the other hand, by making $v\ge K$ for sufficiently large $K$, we can get an arbitrarily large lower bound for the initial velocity $|\dot x_2(0)|$.  If this initial velocity exceeds the escape velocity for the one-dimensional Kepler problem with force $-\a/x_2^2$, then $|x_2(t)|$ converges monotonically to infinity.  Moreover, we can arrange that the kinetic energy of $x_2(t)$ remains as large as we wish.  As long as this holds, it follows from the Jacobi energy equation that $x_1(t)$ remains bounded and we can arrange that the 
bound $\cos^2 u\le \fr12$ continues to hold.  Therefore the limiting shape is a binary collision which must be $u=\fr{\pi}{2}$.

Since we have $u_+(z)\into 0$ as $z\into 0_+$ and $u_+(z)=\fr{\pi}{2}$ for all  $z$ near $1$,  it follows from the intermediate value theorem that $u_+(q(z))$ take on all values in 
$(0,\fr{\pi}{2}]$.  In other words, there are heteroclinic orbits from $L_+$ to all of the restpoints at infinity with $u$ in this range, as claimed.
When $m_3<\fr{55}{4}$ the eigenvalues at the Euler restpoints $E, E^*$ on the collision manifold $\cM_0$ are nonreal.  As a result, $W_u(L_+)$ forms a spiralling surface around the Euler homothetic orbit.  Following this to a neighborhood of the restpoint $p = (1,\sqrt{2h},0,0)\in \cM_\infty$ shows that $W_u(L_+)$ intersects $W^s(p)$ infinitely often.  

The proofs for $L_-$ follows by symmetry.
\end{proof}

Figure~\ref{fig_lagrangetoinfinity} illustrates Proposition~\ref{prop_Lplustoinfinity} for the case $m_3=1$.   In particular the graph on the right shows some values of the function $u_+(z)$.  Recall that this function is actually analytic near orbits whose limiting shapes are not binary collisions.  It follows from this that values not congruent to $\pm\fr{\pi}{2}$ are only attained finitely many times.  In the figure, it seems that  at least in this case $u_+(z)$ is actually monotonically increasing over much of its domain.

\begin{figure}[h]
\scalebox{0.5}{\includegraphics{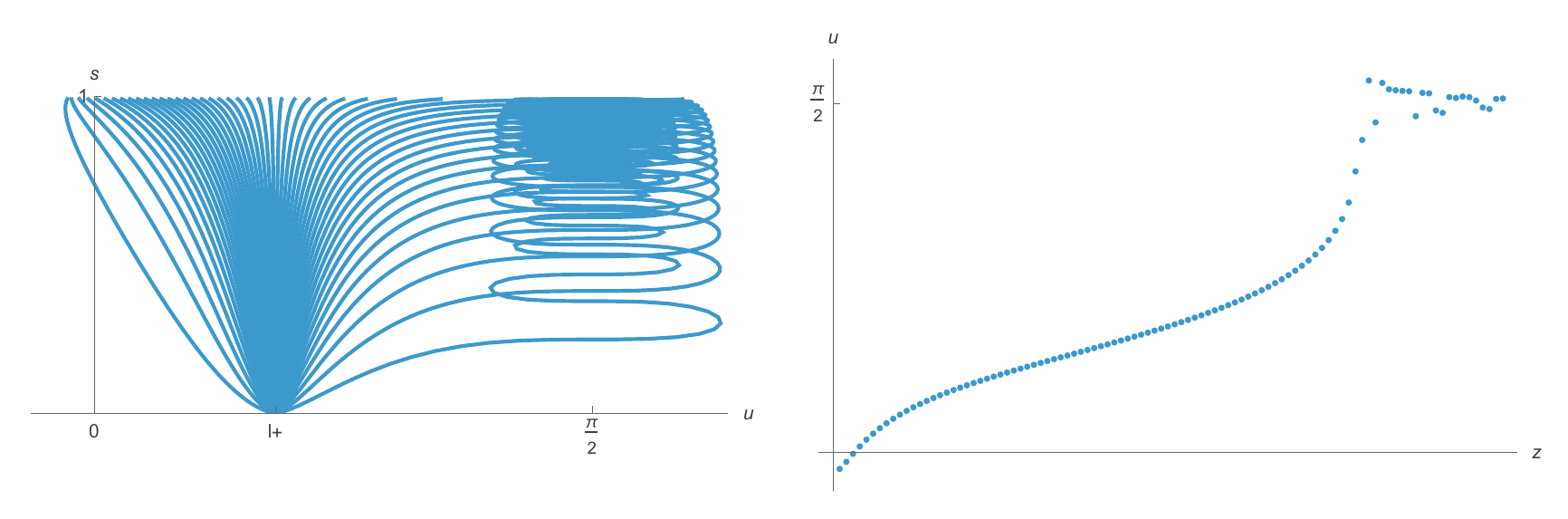}}
\caption{Connections from the restpoint $L_+$ in the collision manifold to various restpoints in the infinity manifold.  The initial conditions are on a small semicircle in $W^u(L+)$ parametrized by $z\in (0,1)$.  On the left, the orbits themselves are shown with $s$ increasing from $0$ to $1$.  The graph on the right shows the final shapes $u_+(z)$.}
\label{fig_lagrangetoinfinity}
\end{figure}

Figure~\ref{fig_lagrangetoinfinity} illustrates Proposition~\ref{prop_Lplustoinfinity} for the case $m_3=1$.   In particular the graph on the right shows some values of the function $u_+(z)$.  Recall that this function is actually analytic near orbits whose limiting shapes are not binary collisions.  It follows from this that values not congruent to $\pm\fr{\pi}{2}$ are only attained finitely many times.  In the figure, it seems that  at least in this case $u_+(z)$ is actually monotonically increasing over much of its domain.

It follows by symmetry and time reversal that there are also connection from the restpoints in $\cE_-$ at  infinity to triple collision.
\begin{cor}\label{cor_infinitytoLplus}
Let $p = (1,-\sqrt{2h},u,0)\in \cE_-\subset \cM_\infty$ be any restpoint at infinity with $0< u \le \fr{\pi}{2}$.  Then there is at least one heteroclinic orbit from  $p$ to $L_+^*$.  Similarly, there is at least one heteroclinic orbit from  any restpoint $p$ with $-\fr{\pi}{2}\le u < 0$ to $L_-^*$.  For $u\ne \pm\fr{\pi}{2}$, there are at most finitely many heteroclinic orbits and, in a cross-section, the intersections of the corresponding stable and unstable manifolds are finite-order crossings of analytic curves.  If $m_3<\fr{55}{4}$ there are also infinitely many heteroclinic orbits from  the Euler restpoint with $u=0$ to each of $L^*_\pm$.

\end{cor}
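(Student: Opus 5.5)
The plan is to obtain Corollary~\ref{cor_infinitytoLplus} from Proposition~\ref{prop_Lplustoinfinity} by the time-reversal symmetry, and then to add the finiteness statement using the analytic structure near $\cE_\pm$ and near the Lagrange restpoints.

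\textbf{Step 1: the reversing symmetry.} I would check that the map
$$R(s,v,u,w)=(s,-v,u,-w),\qquad \tau\mapsto-\tau,$$
sends solutions of (\ref{eq_sode}) to solutions. It preserves (\ref{eq_senergy}), since $v$ and $w$ enter only quadratically. In each equation of (\ref{eq_sode}) the parity of the right-hand side matches that of the left-hand side after the sign change in $\tau$:
\begin{itemize}
\item $s'$ and $w'$ are odd in $(v,w)$;
\item $u'$ is odd in $w$;
\item $v'$ is even.
\end{itemize}
The map $R$ exchanges $L_\pm$ with $L^*_\pm$, $E$ with $E^*$, and $(1,\sqrt{2h},u,0)$ with $(1,-\sqrt{2h},u,0)$. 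It also exchanges stable and unstable manifolds. A heteroclinic orbit $\g$ from $L_+$ to $(1,\sqrt{2h},u,0)$ given by Proposition~\ref{prop_Lplustoinfinity} therefore yields $R\g(-\tau)$, a heteroclinic orbit from $(1,-\sqrt{2h},u,0)$ to $L^*_+$. The same argument handles the ranges for $L_-$ and the infinitely many Euler connections when $m_3<\fr{55}{4}$.

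\textbf{Step 2: finiteness for $u\ne\pm\fr\pi2$.} Fix $p\in\cE_-$ with $u\ne\pm\fr\pi2$. By Proposition~\ref{prop_analyticlinearization}, $W^u(p)$ is an analytic two-dimensional half-disk. The orbit connection is the intersection of $W^u(p)$ with $W^s(L^*_+)$, which is an analytic two-dimensional manifold since $L^*_+$ is a hyperbolic restpoint of an analytic vector field.

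Equivalently, by reversibility, I would work forward from $L_+$. Take the arc $q(z)$, $z\in(0,1)$, in a cross-section of $W^u(L_+)$, as in the proof of Proposition~\ref{prop_Lplustoinfinity}. The orbits with final shape $u_+\ne\pm\fr\pi2$ lie in $W^s_{loc}(\cE_+)$, and there the projection to $\cE_+$ is real analytic. Hence $z\mapsto u_+(q(z))$ is real analytic on the open set where it is not $\pm\fr\pi2$.

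This function is not constant: it tends to $0$ at $z=0^+$ and equals $\fr\pi2$ near $z=1$. So on each component the level set $u_+=u$ is discrete, and each zero of $u_+(q(z))-u$ has finite order. That order is the order of tangency of the analytic curves $W^u(L_+)\cap\Sigma$ and $W^s(p)\cap\Sigma$ in a cross-section $\Sigma$, which gives the finite-order crossing claim.

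\textbf{Step 3: global finiteness (main obstacle).} To get finitely many solutions overall, I would show the level set cannot accumulate at the ends of $(0,1)$.
\begin{itemize}
\item \textbf{Near $z=1$:} $u_+\equiv\fr\pi2$, so there are no solutions there.
\item \textbf{Near $z=0$:} $u_+\to0$, so for a fixed $u\neq0$ there are no solutions close to $0$.
\item \textbf{Interior points where $u_+=\pm\fr\pi2$:} since $u_+$ is continuous (Corollary~\ref{cor_shapeconvergence}), solutions with a fixed $u$ stay a definite distance from such points.
\end{itemize}
A compactness argument in $z$ then gives finiteness. This relies on $u_+(q(z))$ being defined for every $z\in(0,1)$, i.e. on there being no triple collision orbits in the arc. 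I would argue this from Proposition~\ref{prop_triplecollision}: $v$ is already positive on $W^u(L_+)$ near $L_+$, so none of these orbits can return to triple collision. Finally, I would transfer the statement from $W^u(L_+)$ back to the corollary's setting via $R$.
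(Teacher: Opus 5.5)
Your proposal is correct and is essentially the paper's argument. The paper obtains the corollary from Proposition~\ref{prop_Lplustoinfinity} by time reversal, and gets finiteness from the real analyticity of the final-shape map $z\mapsto u_+(q(z))$ away from binary-collision limits together with its endpoint behavior; you spell this out in more detail than the paper does.

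Two small fixes. First, the right-hand side of the $w'$ equation is even, not odd, in $(v,w)$, and evenness is exactly what the reversal $w\mapsto -w$, $\tau\mapsto-\tau$ requires. Second, in Step 2 you need $u_+\not\equiv u$ on each component of $\{u_+\not\equiv \fr{\pi}{2} \bmod \pi\}$ (not just on the whole arc, where $u_+$ is only continuous), and this follows from continuity at the component endpoints rather than from global nonconstancy.
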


\section{Scattering theorems}\label{sec_scatteringtheorems}
Now we will put together all of the ingredients from previous sections to prove the main results.  Let $p = (1,-1,u,0)\in \cE_-$ with $0< u < \fr{\pi}{2}$.  Recall that the part of $W^u(p)$ with $s\le 1$ is a two-dimensional half-disc.   To study scattering starting from the shape represented by  $u$, let $\a(z)$, $-1\le z\le 1$, be a parametrization of a semicircular transversal to the flow in the unstable manifolds such that $\a(1)$ is the point of the transversal in  the branch of $W^u_0(p)$ which begins with $w>0$ and $\a(-1)$ is the point of the transversal in branch of $W^u_0(p)$ which begins with $w<0$.  According to Corollary~\ref{cor_infinitytoLplus} there are finitely many parameters $-1<z_i<1$, $i=1,\ldots,n$, such that  $\a(z_i)\in W^s(L_+^*)$.  Let the $z_i$ be listed in increasing order in $(-1,1)$ and define intervals $J(p) =[-1,z_1)$ and $K(p) = (z_n,1]$, that is, the parameter intervals from the endpoints in the collision manifold to the first point which tends to triple collision.  If $m_3<\fr{55}{4}$ there will also be connections from the Euler restpoint with $u=0$.  Although there are infinitely many of these, they will not accumulate near the endpoints $z=\pm1$.  In this case it will still be possible to identify a first and last intersection point and to set up corresponding intervals $J(p), K(p)$.  The following result shows what happens to these intervals in $W^u(p)$ under scattering.  In the statement of the proposition, we allow intervals $(a,b)$ with $b<a$.

\begin{proposition}\label{prop_endpointscattering}
The final shape map $u_+(\a(z))$ is defined and continuous on the intervals $J(p)$ and $K(p)$.  If $m_3\ne \e_1,\e_2$ (the two degenerate values) then  the limits 
$$u_*(p) = \lim_{z\into z_1}u_+(\a(z))\qquad u^*(p) = \lim_{z\into z_n}u_+(\a(z))$$
exist and are equal mod $\pi$ to $0$ or $\p2$.   The initial shape $u$ is related by scattering to all of the shapes with
$$u\in  (u-\pi,u_*)\union (u^*,u+\pi).$$
If $u_*$ or $u^*$ equals $\p2$ mod $\pi$, then those points are also related to $u$ by scattering.   If $m_3<\fr{55}{4}$, the same applies when $u_*$ or $u^*$ equals 0.
\end{proposition}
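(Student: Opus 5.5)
Fix $p=(1,-1,u,0)$ and the transversal $\a(z)$. First, on $J(p)$ and $K(p)$ no orbit $\a(z)$ has a forward triple collision, except possibly at the excluded endpoints $z_1,z_n$. Indeed, by Proposition~\ref{prop_triplecollision} and the definition of $z_i$, the only parameters in $(-1,1)$ with forward triple collision are those in $W^s(L^*_\pm)$ or $W^s(E^*)$. In the case $m_3<\fr{55}{4}$ the Euler intersections do not accumulate at $z=\pm1$, so they have been excluded by the choice of first and last intersection. Hence Corollary~\ref{cor_shapeconvergence} gives a well-defined, continuous final shape $u_+(\a(z))$ on the open parts of $J(p)$ and $K(p)$. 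At the endpoints $z=\pm1$ the orbit lies in $\cM_\infty$. Proposition~\ref{prop_totalvariation} shows it follows a restpoint chain ending at $u\pm\pi$, with the sign matching the sign of $w$. Proposition~\ref{prop_shadowing}, built from repeated use of Proposition~\ref{prop_exchange} at the binary collision restpoints, gives continuity there with $u_+(\a(\pm1))=u\pm\pi$. So on $K(p)$ the values start at $u+\pi$, and on $J(p)$ at $u-\pi$.

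Next I compute the limits at $z_1$ and $z_n$. Take $z_n$, where $\a(z_n)\in W^s(L^*_+)$ (the cases $L^*_-$ and $E^*$ are analogous). Since $W^s(L^*_+)$ is one-dimensional and $W^u(L^*_+)$ is two-dimensional, the $\lambda$-lemma applies: for $z\to z_n^+$ the orbits enter a neighborhood of $L^*_+$ and exit close to one fixed branch of $W^u_0(L^*_+)$ in $\cM_0$. Which branch is used is determined by the side of $z_n$, and it is a single branch because the crossing is a finite-order crossing of analytic curves (Corollary~\ref{cor_infinitytoLplus}). By time reversal and reflection, Proposition~\ref{prop_collisionwus} says this branch either ends at the sink $E$ or spirals up an arm $\t=\pm\p2$; this uses $m_3\ne\e_1,\e_2$.

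In the spiral case, the nearby orbits leave $\{s\le s_0\}$ as tight binaries with large $v$. The escape argument from the proof of Proposition~\ref{prop_Lplustoinfinity} then shows that $u_+$ is constant, equal to $\p2$ mod $\pi$, for $z$ near $z_n$. In the $E$ case, the orbits pass near $E$ and exit along the Euler homothetic orbit. Continuity of the final shape at that orbit, whose limit is $u_+=0$ and which is forward hyperbolic so the limit is open and continuous, gives $u_+\to 0$ mod $\pi$. Keeping track of the lift in $\R$ determines the representatives $u_*$ and $u^*$.

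The intermediate value theorem on the connected intervals $K(p)$ and $J(p)$ now gives every value between $u^*$ and $u+\pi$, and every value between $u-\pi$ and $u_*$. Each such value $u'\not\equiv\pm\p2$ is the limit along a heteroclinic orbit ending in $\cE_+$, and so is related to $u$ by scattering. When $u^*$ or $u_*$ is $\p2$ mod $\pi$ it is attained outright, since $u_+$ is locally constant there, and the resulting orbits are hyperbolic-elliptic or parabolic with limits on the goal posts. When the limit is $0$ and $m_3<\fr{55}{4}$, the spiralling of $W^u(L^*_\pm)$ around the Euler homothetic orbit means the arc crosses $W^s$ of the Euler point at infinity, exactly as in Proposition~\ref{prop_Lplustoinfinity}, so $0$ is attained as well.

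I expect the main obstacle to be the passage past the saddle $L^*_+$, which is not itself in $\cM_0$-only coordinates. One has to check that the one-sided family near $z_n$ exits near exactly one branch of $W^u_0(L^*_+)$ rather than elsewhere in the two-dimensional $W^u(L^*_+)$. My plan is to use the $\lambda$-lemma together with the fact that the exit set near $s=0$ is $\cM_0$-adjacent, because the $s$-direction is the extra unstable direction.
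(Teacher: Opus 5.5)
Your outline follows the paper's proof step for step: continuity on $J(p)\cup K(p)$, endpoint values $u\pm\pi$ via Propositions~\ref{prop_totalvariation} and \ref{prop_shadowing}, passage by $L^*_+$ onto a branch of $W^u_0(L^*_+)$, Simo's dichotomy, the homothetic-orbit and escape arguments, and the intermediate value theorem. However, the step you single out as the main obstacle rests on a reversed dimension count.

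You take $W^s(L^*_+)$ to be one-dimensional and $W^u(L^*_+)$ two-dimensional, as the paper's table states. The Lagrange rows of that table contradict the sentence just before it, which says starred restpoints gain an extra \emph{stable} direction. They also contradict Proposition~\ref{prop_Lplustoinfinity}, where $W^u(L_+)$ is two-dimensional.

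Your proposed resolution, that the $s$-direction is the extra unstable direction at $L^*_+$, is false. There $v=-\sqrt{2V(l_+)}<0$, and linearizing $s'=vs(1-s)\cos^2u$ at $s=0$ gives the eigenvalue $v\cos^2 l_+<0$. The resolution also defeats itself: an expanding $s$-direction would push nearby orbits away from $\mathcal{M}_0$, not along it. Indeed, if $W^s(L^*_+)$ were one-dimensional it would coincide with $W^s_0(L^*_+)\subset\{s=0\}$. Then no orbit with $s>0$ could reach $L^*_+$: not the orbits of the $\alpha(z_i)$, and not the Lagrange homothetic collision orbit.

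The correct picture is $W^s(L^*_+)$ two-dimensional and $W^u(L^*_+)=W^u_0(L^*_+)$, the two branches in $\mathcal{M}_0$. This is what the paper means by writing $W^u(L^*_+)\subset\mathcal{M}_0$. The arc $\alpha$ crosses the surface $W^s(L^*_+)$ at $z_n$ with finite order, so the points $\alpha(z)$ with $z\to z_n^+$ lie on one side of it. Standard passage past a saddle with two-dimensional stable and one-dimensional unstable manifold then sends them out along the single branch on that side. Nothing further needs checking, and the rest of your argument goes through as in the paper.

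Two smaller slips have the same origin:
\begin{itemize}
\item In the $m_3<\frac{55}{4}$ remark, what spirals around the Euler homothetic orbit is the image of the arc near $z_n$, not the one-dimensional $W^u(L^*_\pm)$. The spiralling comes from $E$ having nonreal eigenvalues within $\mathcal{M}_0$.
\item Time reversal carries $W^u(L^*_+)$ to $W^s(L_+)$, not to $W^u(L_+)$. So Simo's dichotomy has to be read directly for $W^u(L^*_+)$, as the paper's Figure~\ref{fig_collisionwus} and its later case analysis do.
\end{itemize}
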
 
\begin{proof}
The points $\a(z)$ with $z\in J(p)\union K(p)$ do not end in triple collision so the scattering map is defined and continuous.  Moreover, Propositions~\ref{prop_totalvariation} and \ref{prop_shadowing} show that
$u_+(\a(\pm1))= u\pm\pi$.  Since the endpoint $z_1$ of $J(p)$ is in $W^s(L_+^*)$, the points  $\a(z)$ with $\d=z_1-z>0$ small will follow one or the other of the branches of $W^u(L_+^*)\subset \cM_0$.  Depending on the value of $m_3$ and the branch followed, Proposition~\ref{prop_collisionwus} shows that the orbits of these points will either approach the Euler restpoint $E$ or spiral up one of the arms of $\cM_0$ (see Figure~\ref{fig_collisionwus}).  In the first case, the orbits emerge near the $W^u(E)$.  Since this is the Euler homothetic orbit, we have  $\lim_{z\into z_1}u_+(\a(z)) = 0\bmod \pi$.   Furthermore, if $m_3<\fr{55}{4}$, the part of the curve with $z\approx z_1$ spirals around the Euler homothetic orbit.  It must cross the stable manifold of the Euler restpoint at infinity and so $0$ is also a scattering value.   In the second case, the orbits with $\d$ sufficiently small emerge from a neighborhood of triple collision with $v$ arbitrarily large.  Then the argument from the proof of Proposition~\ref{prop_Lplustoinfinity} shows that the limiting configuration is the binary collision, that is $\lim_{z\into z_1}u_+(\a(z)) = \p2\bmod \pi$.  In fact all $z$ with $\d$ sufficiently small have $u^+(\a(z))=u_*$ so $u_*$ is also a scattering value.
A similar argument applies to $\lim_{z\into z_n}u_+(\a(z))$.

Since $\a_+$ depends continuously on initial conditions, the rest of the proposition follows from the intermediate value theorem.
\end{proof}

Consider the case $\e_1<m_3<\e_2$ from Proposition~\ref{prop_collisionwus} and note that $\e_2<\fr{55}{4}$.   For the branch of $W^u_0(L^*_+)$ beginning with $w>0$, $u(t)$ increases from $u=l_+$ to $\p2$ and then spirals up the arm of the collision manifold at $u=\fr{3\pi}{2}$.  For the branch beginning with $w<0$, $u(t)$ decreases to $-\p2$ and then spirals up the arm at $u=-\fr{3\pi}{2}$.  Thus the possible values of $u_*, u^*$ are $\pm\fr{3\pi}{2}$.    The leads to cases which we will consider in turn.  If our goal is to find a lot of scattering, the worst case is  $u_*= -\fr{3\pi}{2}$ and $u^*=\fr{3\pi}{2}$.  In this case we have $u$ related by scattering to $[-\fr{3\pi}{2},u-\pi)\union (u+\pi,\fr{3\pi}{2}]$.  If we recall the relation between the shape variables $u,\t$, this translates to the intervals
 $[-\p2,-\t(u))\union (-\t(u),\p2]$ for $\t$ (see Figure~\ref{fig_thetau}).  In other words we can scatter to the whole interval $[-\p2,\p2]$ except possibly $\t(u)$.  Even that value is attained if we include scattering at infinity.
  In the other three case, the whole $\t$-interval $[-\p2,\p2]$ is attained.  For example, if $u_* = \fr{3\pi}{2}$ the  first interval in Proposition~\ref{prop_endpointscattering} would be
 $(u-\pi,\fr{3\pi}{2})$ which clearly covers all shapes. The same argument applies for restpoints $p = (1,-1,u,0)\in \cE_-$ with $\fr{\pi}{2}<u<0$.  So we have:
 
 \begin{proposition}\label{prop_scatteringcaseII}
 If $\e_1<m_3<\e_2$ then each shape $\t_-\ne \pm\p2$ is related by scattering to every shape $\t_+\in [-\p2,\p2]$, except possibly $-\t_-$.
 \end{proposition}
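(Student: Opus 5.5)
The plan is to apply Proposition~\ref{prop_endpointscattering} to the restpoint $p=(1,-\sqrt{2h},u_0,0)\in\cE_-$ representing $\t_-$, and then translate the resulting $u$-intervals into $\t$-intervals using $\t(u+\pi)=-\t(u)$ and the monotonicity of $\t(u)$ on each interval $[k\pi-\p2,k\pi+\p2]$. By the reflection symmetry $u\mapsto -u$ (which sends $L_+^*$ to $L_-^*$), it suffices to treat $0<u_0<\p2$. The case $u_0=0$ (Euler shape) goes the same way, with $L_\pm^*$ replaced by $E^*$ together with whichever Lagrange restpoints its unstable manifold meets. For $0<u_0<\p2$, Corollary~\ref{cor_infinitytoLplus} gives finitely many crossings $z_i$ of the transversal $\a(z)$ with $W^s(L_+^*)$. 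Hence the intervals $J(p)=[-1,z_1)$ and $K(p)=(z_n,1]$ are nondegenerate, and Propositions~\ref{prop_totalvariation} and \ref{prop_shadowing} give $u_+(\a(\pm1))=u_0\pm\pi$.

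The key step is identifying the limits $u_*$ and $u^*$. Points $\a(z)$ with $z$ near $z_1$ (or $z_n$) pass close to $L_+^*$ and then shadow one of the two branches of $W^u_0(L_+^*)$. By Proposition~\ref{prop_collisionwus}, transported by time-reversal and reflection symmetry to $L_+^*$, for $\e_1<m_3<\e_2$ neither branch ends at $E$. The $w>0$ branch spirals up the arm at $u\equiv\fr{3\pi}{2}$ and the $w<0$ branch spirals up the arm at $u\equiv-\fr{3\pi}{2}$. So each of $u_*,u^*$ lies in $\{\pm\fr{3\pi}{2}\}$, the exact value depending on which side of $W^s(L_+^*)$ the orbit lies. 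The difficulty is that we do not know which side of the crossing at $z_1$ leads to which branch. I would avoid deciding this by running through all four sign combinations.

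\begin{itemize}
\item \textbf{Worst case} $u_*=-\fr{3\pi}{2}$, $u^*=\fr{3\pi}{2}$. The intermediate value theorem on $J(p)$ and $K(p)$ gives scattering to $[-\fr{3\pi}{2},u_0-\pi)\union(u_0+\pi,\fr{3\pi}{2}]$. The endpoints are included because orbits near the spiralling arm have final shape exactly the binary collision, as in the proof of Proposition~\ref{prop_Lplustoinfinity}. On $[-\fr{3\pi}{2},-\p2]$ and $[\p2,\fr{3\pi}{2}]$, $\t$ is monotone and sweeps $[-\p2,\p2]$. Via $\t(u_0\pm\pi)=-\t_-$, these two pieces give $[-\p2,-\t_-)\union(-\t_-,\p2]$.
\item \textbf{Other three cases.} One of the two intervals already spans a full sheet, for example $(u_0-\pi,\fr{3\pi}{2})\supset[-\p2,\fr{3\pi}{2}]$. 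It therefore covers all of $[-\p2,\p2]$, including $-\t_-$.
\end{itemize}

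The main obstacle I expect is justifying that the limiting shapes are really $\pm\fr{3\pi}{2}$ rather than some other representative. This means tracking how $u$ lifts along the spiralling branch in the universal cover, as opposed to just its class mod $\pi$, because the $u$-intervals must be genuinely long in $\R$. I would handle it by following $u$ continuously along the branch from $l_+$: on the $w>0$ branch $u$ increases past $\p2$ and is trapped in the arm at $\fr{3\pi}{2}$, per the description preceding the proposition. I would then invoke the escape argument from Proposition~\ref{prop_Lplustoinfinity} to conclude $u_+$ equals that lift.
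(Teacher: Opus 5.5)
Your proposal is essentially the paper's own argument: apply Proposition~\ref{prop_endpointscattering} with $u_+(\alpha(\pm1))=u_0\pm\pi$, read off $u_*,u^*\in\{\pm\frac{3\pi}{2}\}$ from the two spiralling branches of $W^u_0(L_+^*)$, check that the worst assignment $u_*=-\frac{3\pi}{2}$, $u^*=\frac{3\pi}{2}$ still gives $[-\frac{\pi}{2},\frac{\pi}{2}]\setminus\{-\theta_-\}$ while the other three cover everything, and handle $u_0<0$ by reflection. One correction: no combination of time reversal and the reflection $u\mapsto -u$ carries $W^u(L_+)$ to $W^u(L_+^*)$ (time reversal sends it to $W^s(L_+^*)$), so Proposition~\ref{prop_collisionwus} cannot be ``transported'' as you describe; it is really a statement about $W^u_0(L_+^*)$ (see the caption of Figure~\ref{fig_collisionwus} and the discussion just before the proposition), and you should cite it directly as such.
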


Next suppose $m_3>\e_2$.  One the branch of $W^u_0(L^*_+)$ beginning with $w>0$, $u(t)$ increases from $u=l_+$ to $\p2$, continues past  $u=\fr{3\pi}{2}$ and ends at the Euler restpoint with $u=2\pi$.  For the branch beginning with $w<0$, $u(t)$ decreases to $-\p2$ and then spirals up the arm at $u=-\fr{3\pi}{2}$.  Thus the possible values of $u_*, u^*$ are 
$-\fr{3\pi}{2}$ and $2\pi$.  Once again the possibility with the smallest scattering range is $u_*=  -\fr{3\pi}{2}$ and $u^*=2\pi$.  Then $u$ related by scattering to 
$[-\fr{3\pi}{2},u-\pi)\union (u+\pi,\fr{3\pi}{2}]\union [\fr{3\pi}{2},2\pi]$.  Since $u>0$, this corresponds to the entire $\t$ interval $[-\p2,\p2]$.  Thus
\begin{proposition}\label{prop_scatteringcaseIII}
 If $m_3>\e_2$ then each shape $\t_-\ne 0,\pm\p2$ is related by scattering to every shape $\t_+\in [-\p2,\p2]$.  If $m_3<\fr{55}{4}$ this is also true for the Euler shape $\t_-=0$.
 \end{proposition}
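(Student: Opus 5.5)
The plan is to mirror the argument given just before Proposition~\ref{prop_scatteringcaseII}, now using the case $m_3>\e_2$ of Proposition~\ref{prop_collisionwus}. We feed this into Proposition~\ref{prop_endpointscattering} and then translate the resulting $u$-intervals into $\t$-intervals using the periodicity $\t(u+2\pi)=\t(u)$ and the antisymmetry $\t(u+\pi)=-\t(u)$.

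First fix $p=(1,-\sqrt{2h},u,0)\in\cE_-$ with $0<u<\p2$ and take the transversal $\a(z)$ in $W^u(p)$ with intervals $J(p)$, $K(p)$. By Corollary~\ref{cor_infinitytoLplus} the first and last triple collision parameters $z_1,z_n$ lie in $W^s(L_+^*)$, or in $W^s(E^*)$ when $m_3<\fr{55}{4}$. Near $z_1$ and $z_n$ the orbits follow branches of $W^u_0$ of the relevant starred restpoint.

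\begin{itemize}
\item \textbf{Case $W^s(L_+^*)$.} For $m_3>\e_2$, the time-reversed and reflected version of Proposition~\ref{prop_collisionwus} says one branch passes $u=\p2$ and ends at the Euler restpoint at $u=2\pi$, i.e.\ $\t=0$. The other branch spirals up the arm at $u=-\fr{3\pi}{2}$. So $u_*,u^*\in\{-\fr{3\pi}{2},2\pi\}$.
\item \textbf{Case $W^s(E^*)$.} If the extremal collision point lies instead in $W^s(E^*)$, the branches of $W^u_0(E^*)$ run to $L_\pm$ or up the arms. This needs a separate, similar check.
\end{itemize}

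In the worst case, $u_*=-\fr{3\pi}{2}$ and $u^*=2\pi$. Proposition~\ref{prop_endpointscattering} then gives scattering to $(u-\pi,-\fr{3\pi}{2}]$ and to $(2\pi,u+\pi)$. The intermediate value theorem on $J(p)$ and $K(p)$ covers the $u_+$ values between $u\mp\pi$ and the limits, in whichever orientation occurs.

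The key observation is that $u+\pi\in(\pi,\fr{3\pi}{2})$ while $u^*=2\pi$. So $u_+$ on $K(p)$ sweeps continuously across all of $[u+\pi,2\pi]$, which contains a full half-period from $\fr{3\pi}{2}$ to $2\pi$ together with $(\pi+u,\fr{3\pi}{2}]$.
\begin{itemize}
\item Over $[\fr{3\pi}{2},2\pi]$, $\t$ runs from $-\p2$ to $0$.
\item Over $[\pi+u,\fr{3\pi}{2}]$, $\t$ runs from $-\t(u)$ down to $-\p2$.
\item Combined with $J(p)$, where $u_+$ runs from $u-\pi$ to $-\fr{3\pi}{2}$, $\t$ runs from $-\t(u)$ up to $\p2$.
\end{itemize}
The value $-\t(u)$ itself is picked up by the $\t$-range $[-\p2,0]$ on $K(p)$, since $-\t(u)<0$. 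So the whole interval $[-\p2,\p2]$ is obtained. The endpoints $\pm\p2$ are genuinely attained because the spiralling case yields $u_+=u_*$ exactly, as in Proposition~\ref{prop_endpointscattering}. The Euler value $0$ requires either $m_3<\fr{55}{4}$ or the direct endpoint $u=2\pi$ attained in the limit; I would handle it by the spiral around the Euler homothetic orbit or by density plus continuity. The other three combinations of $u_*,u^*$ give even larger ranges, and the case $-\p2<u<0$ follows by the reflection symmetry $u\mapsto -u$.

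For the Euler initial shape $u=0$, $p$ is connected to triple collision through $W^s(E^*)$ along the homothetic direction. When $m_3<\fr{55}{4}$, Corollary~\ref{cor_infinitytoLplus} and the non-real eigenvalues let us still isolate a first and last crossing, so the same argument applies. The main obstacle I expect is the bookkeeping at $z_1,z_n$: determining exactly which $u$-lift each branch ends at (arm versus Euler point, with the correct multiple of $\pi$), and confirming that the worst-case union really covers $-\t_-$ and $0$. That is where I would check the orientation carefully against Figure~\ref{fig_collisionwus}.
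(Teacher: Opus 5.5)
This is essentially the paper's proof. You take the $m_3>\epsilon_2$ endpoints $u_*,u^*\in\{-\frac{3\pi}{2},2\pi\}$ from Proposition~\ref{prop_collisionwus} and insert the worst case $u_*=-\frac{3\pi}{2}$, $u^*=2\pi$ into Proposition~\ref{prop_endpointscattering}. You then check that $[-\frac{3\pi}{2},u-\pi)\cup(u+\pi,2\pi]$ maps onto all of $[-\frac{\pi}{2},\frac{\pi}{2}]$ under $\theta$ when $u>0$; your bookkeeping here is more explicit than the paper's.

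Some of your hedges are misplaced, though:
\begin{itemize}
\item For $\theta_-\ne 0$, the value $\theta_+=0$ is already attained in the interior of the $J(p)$ range at $u_+=-\pi$, because $u-\pi>-\pi$. So no $m_3<\frac{55}{4}$ condition or density argument is needed there.
\item $W^s(E^*)$ is just the Euler homothetic orbit, which $W^u(p)$ cannot meet when $\theta_-\ne 0$. The $E^*$ case you leave open therefore never arises.
\item The spiral around the Euler homothetic orbit is needed only for $\theta_-=0$. There $-\theta(u)=0$ is no longer an interior value, so $\theta_+=0$ must come from actually attaining the $E$-type endpoint $u^*=2\pi$.
\end{itemize}
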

 
 Finally, consider the case $m_3<\e_1$.  This time  the branch of $W^u_0(L^*_+)$ beginning with $w>0$ has $u(t)$ increasing from $u=l_+$ to  $u=\fr{3\pi}{2}$ while  the branch beginning with $w<0$ decreases to the Euler configuration  $u=-\pi$.  The possible values of $u_*, u^*$ are 
$-\pi$ and $\fr{3\pi}{2}$.  If these are assigned to $u_*$ ands $u^*$, respectively, then the scattering intervals are 
$[-\pi,u-\pi)\union (u+\pi,\fr{3\pi}{2}]$ and the corresponding $\t$ values cover $[-\p2,-\t)\union (-\t,0]$ which amounts only to about half of the full range.   Other possible assignments of $u_*, u^*$ lead to more scattering.

\begin{proposition}\label{prop_scatteringcaseI}
 If $m_3<\e_1$ then each shape $\t_-\in [0,\p2)$ is related by scattering to every shape $\t_+\in [-\p2,0]$, except possibly $-\t_-$.  Each shape shape $\t_-\in (-\p2,0]$ is related by scattering to every shape $\t_+\in [0,\p2]$, except possibly $-\t_-$.
 \end{proposition}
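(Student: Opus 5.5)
The proof will follow the same scheme used for Propositions~\ref{prop_scatteringcaseII} and \ref{prop_scatteringcaseIII}: apply Proposition~\ref{prop_endpointscattering} to the restpoint $p=(1,-\sqrt{2h},u,0)\in\cE_-$ with $\t(u)=\t_-$. Then read off the possible limits $u_*,u^*$ from Simo's description (Proposition~\ref{prop_collisionwus}) for $m_3<\e_1$, and translate the resulting $u$-intervals into $\t$-intervals using the graph of $\t(u)$ (Figure~\ref{fig_thetau}).

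First take $\t_-\in(0,\p2)$, so $0<u<\p2$. By Corollary~\ref{cor_infinitytoLplus} the transversal $\a(z)$ in $W^u(p)$ meets $W^s(L_+^*)$, so the intervals $J(p),K(p)$ are defined. (If $W^s$ of the Euler point or of $L_-^*$ is also met, the first and last crossings may be at those points; I would handle this by the same reasoning, using symmetry.) For $z$ near $z_1$ or $z_n$, the orbits follow a branch of $W^u_0(L_+^*)$. For $m_3<\e_1$ these branches are as follows:

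\begin{itemize}
\item the branch starting with $w>0$ carries $u$ from $l_+$ up the arm at $u=\fr{3\pi}{2}$, giving limit $\p2\bmod\pi$, realized as $\fr{3\pi}{2}$;
\item the branch starting with $w<0$ ends at the Euler restpoint at $u=-\pi$, giving limit $0\bmod\pi$.
\end{itemize}

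So each of $u_*,u^*$ lies in $\{-\pi,\fr{3\pi}{2}\}$. I will check each assignment. The worst case is $u_*=-\pi$, $u^*=\fr{3\pi}{2}$. Proposition~\ref{prop_endpointscattering} then gives scattering to $u_+\in[-\pi,u-\pi)\union(u+\pi,\fr{3\pi}{2}]$, where the endpoints are included: $\fr{3\pi}{2}$ because binary-collision limits are attained, and $-\pi$ because of the Euler spiralling when $m_3<\e_1<\fr{55}{4}$.

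Next I translate to $\t$. Since $\t(u+\pi)=-\t(u)$, and $\t$ increases on $[-\pi,-\p2]$ from $0$ to $\p2$, then decreases on $[-\p2,u-\pi]$ from $\p2$ to $-\t_-$, the first interval covers $[-\t_-,\p2]\cup\{\ldots\}$. I need to redo this carefully: on $[-\pi,u-\pi)$ the values run through $[0,\p2]$ and then down from $\p2$ to $-\t_-$ (excluded). Together with $(u+\pi,\fr{3\pi}{2}]$, which maps onto $(-\t_-,\ldots]$, this yields at least $[-\p2,0]\setminus\{-\t_-\}$. The paper states this interval as $[-\p2,-\t)\union(-\t,0]$, and I would verify it against Figure~\ref{fig_thetau}. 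In the other assignments, the interval $(u-\pi,u_*)$ or $(u^*,u+\pi)$ is longer, for example $(u-\pi,\fr{3\pi}{2})$, and it contains the worst-case set. The case $\t_-=0$ I would handle by the Euler connections of Corollary~\ref{cor_infinitytoLplus}, or by a limiting argument.

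For $\t_-\in(-\p2,0]$, I will apply the reflection symmetry $u\mapsto-u$ (so $\t\mapsto-\t$), which exchanges $L_\pm^*$ and maps scattering orbits to scattering orbits. This converts the first statement into the second. The step I expect to be the main obstacle is the bookkeeping of which branch corresponds to $z_1$ versus $z_n$ and the exact $u\mapsto\t$ translation, including endpoint inclusion at $\t_+=0$. I would settle the endpoint at $0$ via the Euler spiralling clause of Proposition~\ref{prop_endpointscattering}.
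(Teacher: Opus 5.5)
Your route is the paper's own. You apply Proposition~\ref{prop_endpointscattering} at $p=(1,-\sqrt{2h},u,0)$ with $0<u<\frac{\pi}{2}$, use Simo's branches to get $u_*,u^*\in\{-\pi,\frac{3\pi}{2}\}$, and take the worst assignment $u_*=-\pi$, $u^*=\frac{3\pi}{2}$; each of the other three assignments gives a $u$-interval of length $>2\pi$. You add the endpoints via the binary-collision and Euler-spiralling clauses and get the second sentence from the reflection $(u,w)\mapsto(-u,-w)$.

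The gap is in the step you flagged, the translation to $\theta$. What you wrote is false, and it does not support your final set, even though that set is correct.
On $[-\pi,-\frac{\pi}{2}]$ we have $\theta(u)=-\theta(u+\pi)$, so $\theta$ \emph{decreases} there from $0$ to $-\frac{\pi}{2}$. Since $u-\pi<-\frac{\pi}{2}$, the interval $[-\pi,u-\pi)$ never reaches a binary collision, so it cannot ``run through $[0,\frac{\pi}{2}]$''. That would put positive $\theta_+$ into the worst case, which yields only the half-range. Likewise $(u+\pi,\frac{3\pi}{2}]$ does not map to $(-\theta_-,\ldots]$. Taken literally, your images give at most $(-\theta_-,\frac{\pi}{2}]$ and never reach $[-\frac{\pi}{2},-\theta_-)$.

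The correct bookkeeping is as follows. We have $u-\pi\in(-\pi,-\frac{\pi}{2})$, $u+\pi\in(\pi,\frac{3\pi}{2})$ and $\theta(u\pm\pi)=-\theta_-$. On both $[-\pi,-\frac{\pi}{2}]$ and $[\pi,\frac{3\pi}{2}]$, $\theta$ decreases monotonically from $0$ to $-\frac{\pi}{2}$. Hence $[-\pi,u-\pi)$ maps onto $(-\theta_-,0]$ and $(u+\pi,\frac{3\pi}{2}]$ maps onto $[-\frac{\pi}{2},-\theta_-)$. The union is the paper's $[-\frac{\pi}{2},-\theta_-)\cup(-\theta_-,0]$, with $\theta_+=0$ realized at $u_*=-\pi$ and $\theta_+=-\frac{\pi}{2}$ at $u^*=\frac{3\pi}{2}$.

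Your two parenthetical repairs also do not work as stated.

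First, crossings with $W^s(L_-^*)$ at the ends are not handled ``by symmetry''. Near such a crossing the orbits follow $W^u_0(L_-^*)$ (at $u=l_-$), so $u_*\in\{-\frac{3\pi}{2},\pi\}$. For example, $u_*=-\frac{3\pi}{2}$ gives only $(-\theta_-,\frac{\pi}{2}]$ from that end. The paper sidesteps this by defining $z_1,\dots,z_n$ as the crossings with $W^s(L_+^*)$, so it tacitly assumes these are the first and last triple-collision parameters. You should state that assumption explicitly rather than appeal to symmetry.

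Second, a limiting argument $\theta_-\to 0$ is not available, because limits of scattering orbits may be triple-collision orbits or chains at infinity. The paper's setup instead applies the same endpoint analysis directly at $u=0$; the infinitely many Euler-related crossings do not accumulate at $z=\pm1$. The $\theta_-=0$ instance of the second sentence then follows from the first by reflection.
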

 
 Propositions~\ref{prop_scatteringcaseI}, \ref{prop_scatteringcaseII} and \ref{prop_scatteringcaseIII} summarize the effect of the binary and triple collisions on scattering.  The binary collision is responsible for the scattering from $\t$ to $-\t$ at infinity while the triple collision determines the values of $u_*,u^*$.  Once these endpoints are determined, continuity of the scattering map gives scattering to  the nontrivial intervals in propositions.  Of course there may be additional scattering beyond that which can be proved by this method.   This will be evident from the experiments described in the next section.
 
 \section{Scattering experiments}\label{sec_scatteringexperiments}
 To illustrate Proposition~\ref{prop_scatteringcaseII}, consider the equal mass case $m_3=1$ with energy $h=\fr12$.  For our initial shape we will choose the Lagrange equilateral triangle $u_-=l_+$.  Let $p=(s,v,u,w)=(1,-1,l_+,0)\in\cE_-$ be the corresponding restpoint at infinity.  We choose several initial conditions in $W_u(p)$ and follow them numerically until they return to the infinity manifold near some point $q=(1,1,u_+,0)\in\cE_+$.  Figure~\ref{fig_scatterm31thetalag} shows these orbits in the $(\t,v)$ plane.  According to Proposition~\ref{prop_scatteringcaseII}, all values of $\t_+$ except perhaps $-\t_-$ are attained.  
 
 Figure~\ref{fig_thetaplusgraphm31} shows a graph of the final angle $\t_+$ as function of the parameter $z$ describing a semicircular crosssection in $W_u(p)$ where the endpoints are in $\cM_\infty$.  The discontinuity occurs at the parameter value of an orbit connecting $p$ to triple collision.  The result is consistent with Propositions~\ref{prop_endpointscattering} and \ref{prop_scatteringcaseII}.

According to Proposition~\ref{prop_scatteringcaseII}, similar results should be obtained starting from any initial shape.  Figures~\ref{fig_thetaplusgraphm31alt} and \ref{fig_thetaplusgraphm31euler} shows the scattering graph starting from a different hyperbolic restpoint $p=(s,v,u,w)=(1,-1,u_-,0)\in\cE_-$.  This time the there are several orbits in $W^u(p)$ which end in triple collision.  The behavior near the endpoints is consistent with Propositions~\ref{prop_endpointscattering} and \ref{prop_scatteringcaseII}.

\begin{figure}[h]
\scalebox{0.4}{\includegraphics{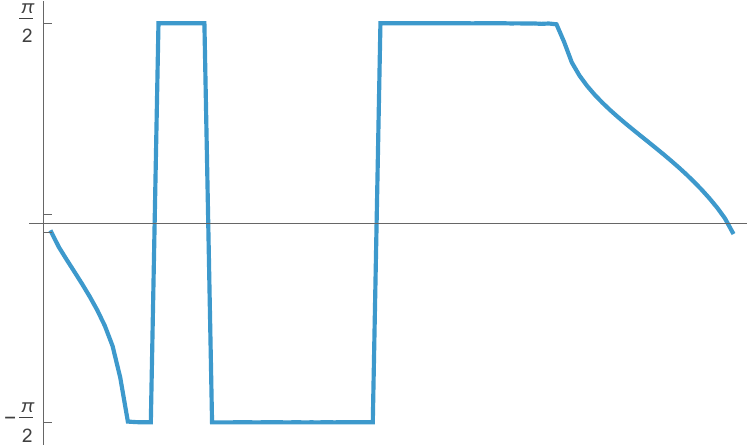}}
\caption{Scattering from another initial shape $\t_-\approx 0.225$ in the equal mass isosceles three-body problem.  This time there are several triple collisions.}
\label{fig_thetaplusgraphm31alt}
\end{figure}

 \begin{figure}[h]
\scalebox{0.4}{\includegraphics{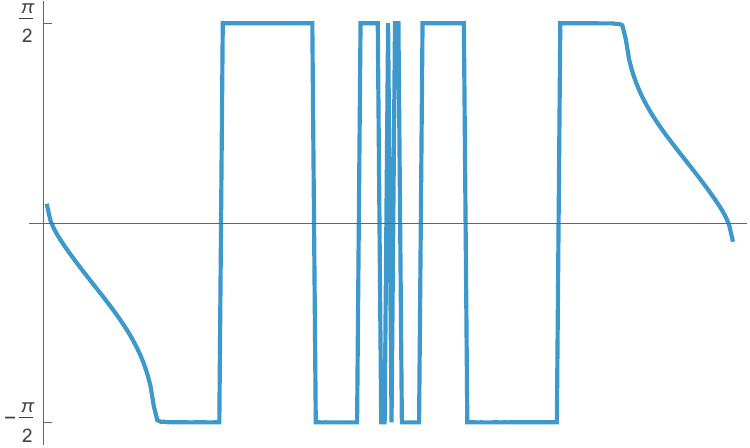}}
\caption{Scattering from the Euler shape $\t_-=0$ in the equal mass isosceles three-body problem hinting at infinitely many triple collisions.}
\label{fig_thetaplusgraphm31euler}
\end{figure}

 To illustrate Proposition~\ref{prop_scatteringcaseIII}, we used $m_3=3$.    The scattering starting from the Lagrange shape is illustrated in Figure~\ref{fig_scatterinfom33}.  This time the whole interval $\t_+\in [-\p2,\p2]$ is covered.  The analogous results for  $m_3=0.3$ are shown in Figure~\ref{fig_scatterinfom303}.  This time the values final scattering angle is confined to a proper subinterval of $[-\p2,\p2]$ which, in accord with Proposition~\ref{prop_scatteringcaseI} contains at least $[-\p2,0]\setminus\{-\t_-\}$.

\begin{figure}[h]
\scalebox{0.4}{\includegraphics{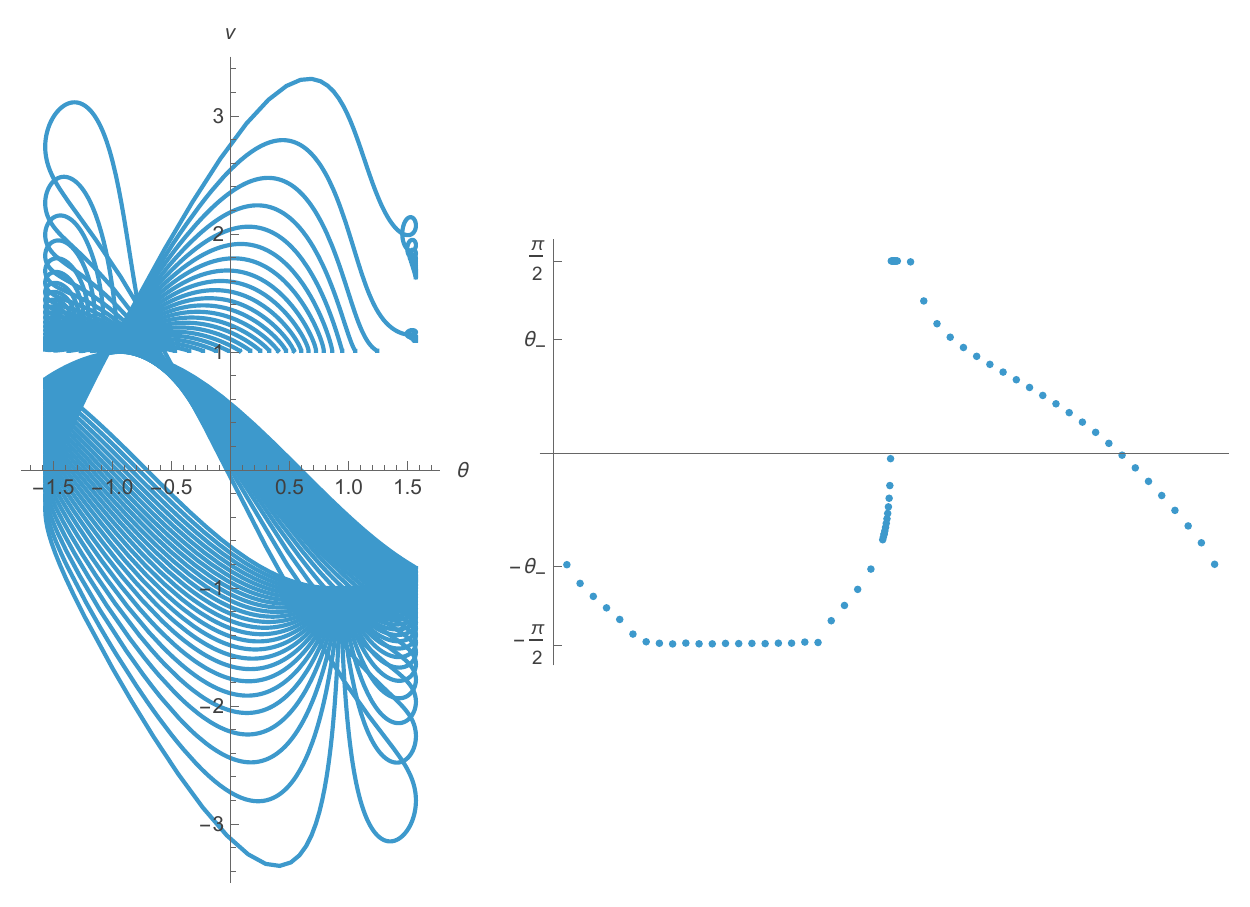}}
\caption{Scattering from the Lagrange equilateral triangle for $m_3=3>\e_2$. All values of $\t_+\in [-\p2,\p2]$ are achieved.}
\label{fig_scatterinfom33}
\end{figure}

\begin{figure}[h]
\scalebox{0.4}{\includegraphics{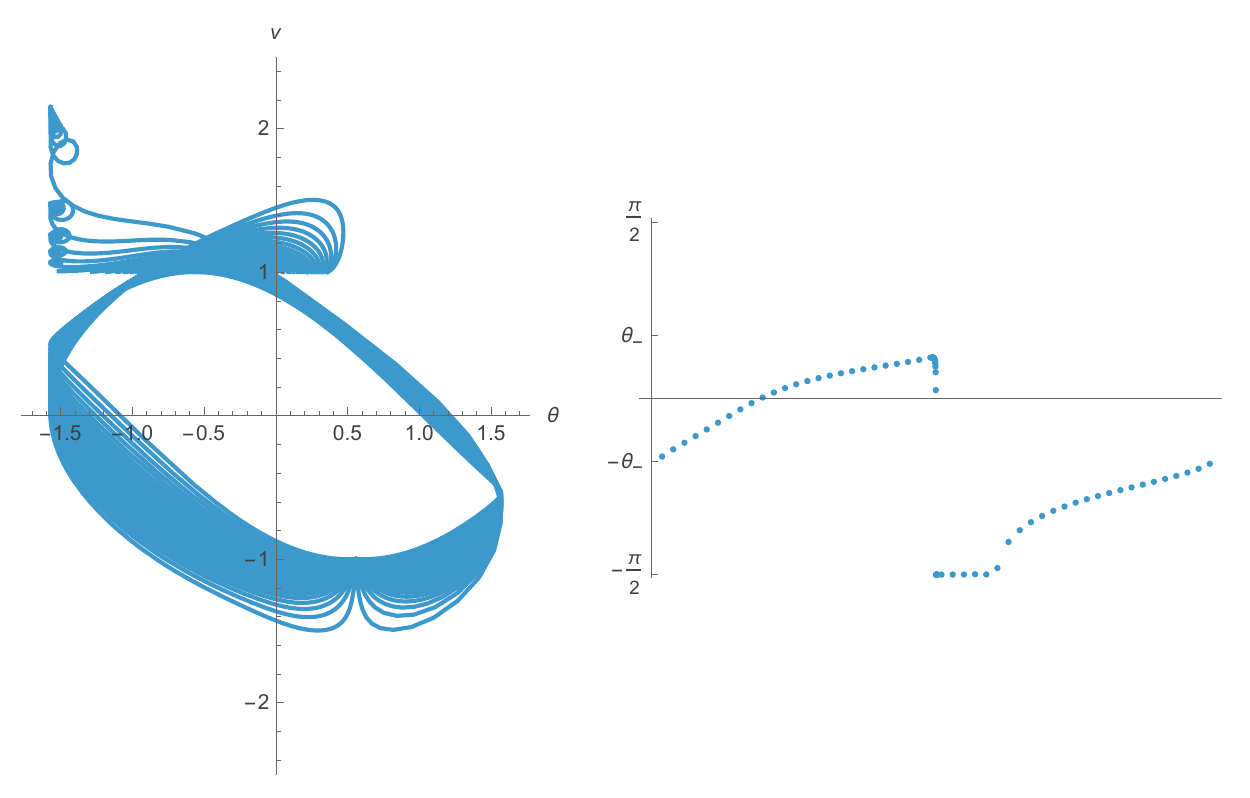}}
\caption{Scattering from the Lagrange equilateral triangle for $m_3=3>\e_2$.  $\t_+$ covers at least $[-\p2,0]\setminus \{-\t_-\}$.}
\label{fig_scatterinfom303}
\end{figure}

\vfill
\vfill
\break

\end{document}